\documentclass[a4paper, 11pt]{article}

\usepackage{./standard, ./thm, ./macros}

\addbibresource{ref.bib}

\begin{document}

\title{Malcev Completions, Hodge Theory, and Motives}
\author{Emil Jacobsen\footnote{%
Part of this work was supported by
the Swiss National Science Foundation (SNSF), project 178729,
and part of it was supported by Dan Petersen's Wallenberg Scholar fellowship.}}
\maketitle

\bigskip

\begin{abstract}
We prove that, on a smooth, connected variety in characteristic zero admitting a rational point,
local systems of geometric origin
are stable under extension in the category of all local systems.
As a consequence of this,
we obtain a (Nori) motivic strengthening of Hain's theorem
on Malcev completions of monodromy representations.

Our methods are Tannakian,
and rely on an abstract criterion for ``Malcev completeness'',
which is proved in the first part of the paper.
A couple of secondary applications of this criterion are given:
an alternative proof of D'Addezio--Esnault's theorem,
which says that local systems of Hodge origin are stable under extension
in the category of all local systems;
a generalization of the theorem of Hain, mentioned above,
which also affirms a conjecture of Arapura;
and an alternative proof of a theorem of Lazda,
which under suitable assumptions
gives an isomorphism between the relative unipotent de Rham fundamental group
and the unipotent de Rham fundamental group of the special fibre.
\end{abstract}

\bigskip

\tableofcontents

\section{Introduction}
The main result of this paper can be stated as:
local systems of geometric origin are stable under extension.
Let us have a closer look at what is meant by this.

Fix a subfield $k$ of $\C$.
For a $k$-variety $X$,
Ivorra and Morel (\cite{ivorra-morel}) define the category of \emph{perverse motives} on $X$,
here denoted by $\PM(X)$.
It is a $\Q$-linear abelian category and comes with a faithful exact
\emph{Betti realization functor}
$\PM(X) \to \clg{P}(X)$ landing in the category $\clg{P}(X)$ of perverse sheaves (with rational coefficients) on $X$.
Over the base field $k$,
perverse motives are the same as Nori motives.

Now let $X$ be a smooth, connected $k$-variety.
Then, the category of local systems (with rational coefficients) $\Loc(X)$ on $X$
forms an abelian subcategory of $\clg{P}(X)$ (after shifting).
This lets us define the category of \emph{motivic local systems}, $\MLS(X)$,
as the full subcategory of perverse motives on $X$ generated by those whose Betti realizations are (shifted) local systems.
We get an induced Betti realization functor $b_X^*\colon \MLS(X) \to \Loc(X)$.

\begin{rmk}
Apart from the results of Ivorra--Morel,
the main reason that motivic local systems are well-behaved enough for our purposes
is the work of Terenzi in~\cite{luca},
to which we return later in the introduction.
\end{rmk}

\begin{defn}
A local system on $X$ is \emph{of geometric origin}, or \emph{comes from geometry},
if it is a subquotient of one in the image of the Betti realization $b_X^*\colon \MLS(X) \to \Loc(X)$.
\end{defn}

Our main result can then be stated as follows:
\begin{thm}[\nref{Thm.}{thm:motivic_hain}.1]\label{thm:easy_main_thm}
Let $X$ be a smooth and connected $k$-variety with a rational point.
Then local systems of geometric origin
are stable under extension in $\Loc(X)$.
In other words,
if
\begin{align*}
0 \to L' \to L \to L'' \to 0
\end{align*}
is a short exact sequence in $\Loc(X)$ such that $L'$ and $L''$ are of geometric origin,
then so is $L$.
\end{thm}

\begin{rmk}
There are multiple other definitions of local systems of geometric origin in the literature.
For some recent examples, see~\cite[Def.~1.6.1]{ayoub_anabelian} and~\cite[685]{landesman-litt}.
Further back,
Simson gives a vague definition in~\cite[9]{simpson}
(using the terminology of a local system being ``motivic'', rather than ``of geometric origin''),
and we believe our definition is in the spirit of this one.
As a consequence of our main theorem (Thm.~\ref{thm:easy_main_thm}),
it is likely our definition of geometric origin agrees with that of Ayoub.
\end{rmk}

The introduction will continue as follows.
First,
in Section~\ref{sec:intro_motivic_malcev},
we introduce the notion of Malcev completions.
This lets us
state our next result,
which serves as a motivic version of a theorem of Hain.
After that, we quickly introduce the motivic fundamental group
in Section~\ref{sec:intro_fund_gp} and give an alternative formulation of the Thm.~\ref{thm:easy_main_thm}.
The key technical input in our proof of the main theorem,
is a criterion for ``Malcev completeness'',
which will be discussed towards the end of the introduction
(Section~\ref{sec:intro_malcev_criterion}).
Before we get there,
we will go over two other applications of this criterion (Section~\ref{sec:intro_other}),
as well as some background which places our main results in a larger context (Section~\ref{sec:background}).
A brief sketch of the proofs of our main results closes out the introduction in Section~\ref{sec:intro_sketch_pfs}.

\subsection{Motivic Malcev completions}\label{sec:intro_motivic_malcev}
Keep $k$ as above and let $X$ be a smooth, connected $k$-variety
with a rational point $x$.
Recall that $x^*\colon \Loc(X) \to \VecOp$
induces an equivalence $\Loc(X) \simeq \Rep \pi_1(X,x)$,
where the latter denotes the category of finite-dimensional $\Q$-linear
representations of the fundamental group of $X^\rom{an}$ pointed at $x$.

Before we go further,
we need to introduce the notion of Malcev completion.
Suppose $P \to K$ is a homomorphism of affine group schemes over a field.
Then, the \emph{Malcev completion} of this homomorphism
satisfies the following universal property:
it is the initial group $\clg{G}$ equipped with
a morphism from $P$ and a morphism to $K$,
such that the latter has pro-unipotent kernel
and such that everything commutes
(see \nref{Section}{sec:malcev} for a much more thorough treatment):
\begin{equation*}\begin{tikzcd}
1 \ar[r] & \clg{U} \ar[r] & \clg{G} \ar[r]     & K \\[-0.5em]
         &                & P \ar[u] \ar[ur]
\end{tikzcd}\end{equation*}
If $P$ is an abstract group with a homomorphism to the rational points of $K$,
we can still talk about the Malcev completion,
by first replacing $P$ with its pro-algebraic completion.

Now let $M \in \MLS(X)$ be a motivic local system on $X$.
The underlying local system $b_X^*M$
corresponds to a (monodromy) representation
$\varrho \colon \pi_1(X, x) \to \GL(V)$.
Let $\hat\pi_1^M(X,x)$ be the Malcev completion of $\varrho$.

We're able to obtain the next theorem as a corollary of the previous one,
using basic properties of Malcev completions worked out in Section~\ref{sec:malcev}.
It's a motivic version of Hain's theorem~\cite[Thm.~13.1]{hain}.

\begin{thm}[{\nref{Thm.}{thm:motivic_hain}.2}]\label{thm:intro_motivic_hain}
We have that $\hat\pi_1^M(X,x)$
is naturally a Nori motive.
\end{thm}

\begin{rmk}
An affine $\Q$-group scheme $G$ \emph{is a Nori motive}
if $\clg{O}(G)$ is a Hopf algebra object in the ind-category of Nori motives.
\end{rmk}

\begin{rmk}
The special case of the unipotent completion of $\pi_1(X,x)$
was treated by Cushman; see~\cite[Thm.~3.1]{cushman}.
\end{rmk}

\subsection{The motivic fundamental group}\label{sec:intro_fund_gp}
We keep $k$, $X$, and $x$ as before.
Let $\Loc_\rom{geo}(X)$ denote the full subcategory of local systems of geometric origin in $\Loc(X)$.
Note that $\Loc_\rom{geo}(X)$ is neutral Tannakian,
neutralized, for instance, by (the restriction of) $x^*$.
The Tannaka dual group of $\Loc_\rom{geo}(X)$ is denoted by
$\pi_1^\rom{mot}(X,x)$
and is called the \emph{motivic fundamental group} of $X$ (pointed at $x$).
By construction,
there is a homomorphism $\pi_1(X,x) \to \pi_1^\rom{mot}(X,x)$.
Theorem~\ref{thm:easy_main_thm} is equivalent to the following:

\begin{thm}[Thm.~\ref{thm:motivic_hain}.1]\label{thm:intro_motivic_malcev_complete}
The homomorphism $\pi_1(X,x) \to \pi_1^\rom{mot}(X,x)$
is \emph{Malcev complete},
i.e.,
its Malcev completion is isomorphic to $\pi_1^\rom{mot}(X,x)$
(along the canonical map).
\end{thm}

This can be viewed as a universal version of Theorem~\ref{thm:intro_motivic_hain}.

\subsection{Other results}\label{sec:intro_other}
The proof of the main theorem relies on a criterion for Malcev completeness
(which is discussed in \nref{Section}{sec:intro_malcev_criterion}).
In this subsection,
I will briefly discuss two other applications of this criterion.

\subsubsection{Hodge theory}
Let $X$ be a smooth, geometrically connected variety over a subfield $k$ of $\C$.
Fix a $\C$-point $x$ of $X$,
and let $\pi_1^\rom{Hdg}(X,x)$ denote the \emph{Hodge fundamental group} of $X$,
i.e.,
the Tannaka dual of the category of
admissible variations of mixed Hodge structures
on $X$,
with respect to the fibre functor induced by $x$
(see \nref{Section}{subsec:hodge}).
\begin{thm}[\nref{Thm.}{thm:generalized_hain}]\label{thm:hain_intro}\
\begin{enumerate}
\item (\cite[Thm.~4.4]{daddezio-esnault})
The map $\pi_1(X,x) \to \pi_1^\rom{Hdg}(X,x)$ is Malcev complete.
Equivalently,
the full subcategory of local systems on $X$ of Hodge origin is
stable under extension in the category of local systems on $X$.
In particular,
every unipotent local system on $X$ is of Hodge origin.
\item
Given an admissible variation of mixed Hodge structures $V$ on $X$,
the Malcev completion of the monodromy representation $\pi_1(X,x) \to \GL(V_x)$
carries a canonical (rational) mixed Hodge structure.
\end{enumerate}
\end{thm}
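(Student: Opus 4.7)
The plan is to apply the abstract Malcev-completeness criterion of the first part of the paper to the tensor functor
\[
\rom{VMHS}^{\rom{adm}}(X) \to \Loc(X)
\]
from the Tannakian category of admissible variations of mixed Hodge structures on $X$ (neutralised by taking the fibre at $x$) to the category of local systems on $X$. This functor induces on Tannaka duals the homomorphism $\pi_1(X^\rom{an},x) \to \pi_1^\rom{Hdg}(X,x)$ of Part~(1), and the image on the right is, by definition, the subcategory of local systems of Hodge origin.

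Per the criterion, Malcev completeness of this homomorphism should reduce to an extension-lifting statement: for every admissible variation $V$, the natural map
\[
\mathrm{Ext}^1_{\rom{VMHS}^{\rom{adm}}(X)}(\Q, V) \to \mathrm{Ext}^1_{\Loc(X)}(\Q, V)
\]
is surjective, i.e., every local-system extension $0 \to V \to E \to \Q \to 0$ underlies an admissible mixed Hodge structure on $X$. This is a classical Hodge-theoretic input going back to Steenbrink--Zucker and Kashiwara: the weight filtration on $E$ is inherited from those on $V$ and $\Q$, the Hodge filtration is put on the Deligne canonical extension across a smooth normal-crossings compactification, and admissibility is verified using the corresponding classical results.

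For Part~(2), once Part~(1) is known, the sub-Tannakian category of $\Loc(X)$ generated by a fixed admissible variation $V$ and closed under extensions in $\Loc(X)$ coincides with the essential image of the Tannakian subcategory of $\rom{VMHS}^{\rom{adm}}(X)$ generated by $V$. The Malcev completion of the monodromy representation of $V$ is naturally a pro-object of the former, so it lifts canonically to a pro-object of the latter; dually, this lift is an action of the Hodge group, which is precisely a canonical mixed Hodge structure on the Malcev completion.

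The main obstacle is the first step: interpreting the abstract criterion in the Hodge-theoretic setting and extracting from it the concrete $\mathrm{Ext}^1$-surjectivity condition above. In particular, one must verify that the resulting notion is absolute Malcev completeness of $\pi_1(X^\rom{an},x) \to \pi_1^\rom{Hdg}(X,x)$ (rather than completeness relative to the image subgroup), and check that the hypotheses of the criterion -- the relevant exactness, neutralisability, and closure properties -- genuinely hold for the realisation functor from admissible variations. The Hodge-theoretic input itself is classical and should amount to a careful citation.
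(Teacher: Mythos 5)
Your reduction is not the right one, and the ``classical Hodge-theoretic input'' you invoke is false. The category whose extension-stability is at stake consists of local systems of Hodge origin, i.e.\ \emph{subquotients} of local systems underlying admissible variations; an extension of two such need only again be such a subquotient --- it need not itself underlie a variation with the prescribed sub and quotient. Your proposed criterion, surjectivity of $\Ext^1_{\MHS(X)}(\Q,V)\to\Ext^1_{\Loc(X)}(\Q,V)$, fails already for $V=\Q(0)$ the constant variation on a smooth projective curve of genus $\geq 1$: any extension of $\Q(0)$ by $\Q(0)$ in $\MHS(X)$ has $W_{-1}=0$ and $W_0$ everything, so its weight-zero graded piece is the whole rank-two local system, which would have to be a polarisable pure variation of type $(0,0)$; a nontrivial unipotent monodromy operator cannot preserve a positive-definite rational form, so the underlying local-system extension must split. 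Hence the map to $\Ext^1_{\Loc(X)}(\Q,\Q)=H^1(X^\rom{an},\Q)\neq 0$ is zero, not surjective. The theorem is nevertheless true because those nonsplit extensions arise as quotients of local systems underlying \emph{larger} variations (e.g.\ the canonical variation extending $\Q(0)$ by the constant variation on $H^1(X,\Q(1))$, quotiented along a map that is not a morphism of Hodge structures); this is precisely why closure under subquotients matters and why Steenbrink--Zucker/Kashiwara admissibility cannot supply what your step needs.

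What the paper's criterion (Thm.~M) actually requires is different, and this is where the real work lies. First, the cohomological assumption is that $b^*R^is_*\to R^i\sigma_*B^*$ is an isomorphism for $i=0,1$: the degree-$0$ and $1$ derived pushforwards of an admissible variation to the point, computed inside ind-variations (hence carrying the Hodge-group action), must have underlying vector spaces $H^0$ and $H^1$ of $X^\rom{an}$ with coefficients in the local system. This is not an $\Ext$-lifting statement about variations; the paper proves it by embedding admissible variations into the ambient theory of mixed Hodge modules (Prop.~\ref{prop:MHM_ambient} and Prop.~\ref{prop:pushf_assump_enriched_ambient}). Second, the surjectivity assumption --- that $\pi_1(X^\rom{an},x)$ surjects onto the maximal pro-reductive quotient $S$ of the kernel $K$ --- is exactly the point you only flag as ``absolute vs.\ relative'' completeness; it is needed both to run Thm.~M and to conclude $\pi_1^\rom{Hdg}(X,x)=K^\rom{Hdg}(X,x)$, and the paper establishes it via Deligne's semi-simplicity theorem (Prop.~\ref{prop:surjectivity_hodge}). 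Finally, for Part~(2) your identification of the extension-closure of $\langle V\rangle$ in $\Loc(X)$ with the image of the Tannakian subcategory of $\MHS(X)$ generated by $V$ fails for the same counterexample (take $V$ trivial: the former contains all unipotent local systems, the latter only trivial ones); the paper instead works with the subcategory generated by $V$ together with all constant variations and deduces the Hodge-group action from functoriality of Malcev completion (Cor.~\ref{cor:generalised_hain_enriched}).
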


\begin{rmk}\label{rmk:intro_carry_MHS}
An affine $\Q$-group scheme $G$ \emph{carries a mixed Hodge structure}
if $\clg{O}(G)$ is a Hopf algebra object in the ind-category of rational, graded-polarisable, mixed Hodge structures.
\end{rmk}

The Malcev completeness of $\pi_1(X,x)\to\pi_1^\rom{Hdg}(X,x)$
has already been proved by D'Addezio and Esnault by a different method
in~\cite[Thm.~4.4]{daddezio-esnault}.
The second part of the theorem
did not appear in~\cite{daddezio-esnault},
and is a generalization of Hain's theorem
(\cite[Thm.~13.1]{hain}, see \nref{Section}{sec:background_hain}).

\subsubsection{De Rham fundamental groups}
Let $X \to S$ be a nice enough morphism of nice enough $k$-schemes
(see \nref{Section}{sec:deRham} for the precise statements).
In~\cite{lazda}, Lazda introduces the relative (unipotent) de Rham fundamental group
$\pi_1^\rom{dR}(X/S)$,
and shows that it's isomorphic,
via a natural map
to the (unipotent) de Rham fundamental group
$\pi_1^\rom{dR}(X_s)$
of the special fibre.
In \nref{Section}{sec:deRham},
Lazda's theorem is deduced using the unipotent isomorphism criterion
(see \nref{Thm.}{thm:U_intro} below).

\subsection{Background}\label{sec:background}
Let us spend some time putting the main theorem in a historical context.

\subsubsection{The fundamental short exact sequence}\label{sec:background_SES}
Fix a field $k$ and an algebraic closure $\bar{k}$.
The \'etale fundamental group $\pi_1^\et(X,x)$ of a connected $k$-pointed $k$-variety $(X,x)$
was introduced by Grothendieck in~\cite{SGA1}.
By functoriality,
the structure map $X \to \Spec(k)$
and the point $x$ induce a split surjection
\begin{equation}\begin{tikzcd}\label{eq:et_split_surj}
\pi_1^\et(X,x) \ar[r, two heads] & \Gal(\bar{k}/k)
  \ar[l, bend right, start anchor = north west, end anchor = north east, yshift=-.5em]
.
\end{tikzcd}\end{equation}
Under suitable assumptions,
Grothendieck gives a precise description of the kernel of this surjection.
Namely, it is given by the geometric \'etale fundamental group
$\pi_1^\et(\bar{X},\bar{x})$
of $X$,
where $\bar{X} := X \otimes_k \bar{k}$.
Thus, there is a split exact sequence of profinite groups:
\begin{equation}\begin{tikzcd}\label{eq:et_fund_seq}
1 \ar[r] & \pi_1^\et(\bar{X},\bar{x}) \ar[r] & \pi_1^\et(X,x) \ar[r] & \Gal(\bar{k}/k)
  \ar[l, bend right, start anchor = north west, end anchor = north east, yshift=-.5em]
  \ar[r] & 1.
\end{tikzcd}\end{equation}
From now on,
we suppose that $k$ is a subfield of the complex numbers.
We then have access to the topological fundamental group
$\pi_1(X,x)$ of the complex analytification $X^\rom{an}$ of $X$.
There is a natural map
\begin{align}\label{eq:top_to_et}
\pi_1(X,x) \to \pi_1^\et(\bar{X},\bar{x}),
\end{align}
and Grothendieck also tells us exactly what it is,
namely a profinite completion.
I am interested in \emph{motivic analogues} of the above results.
More precisely,
\begin{enumerate}[(a)]
\item\label{enum:goal_splitting}
a motivic analogue of the split surjection~\heqref{eq:et_split_surj},
\item\label{enum:goal_kernel}
a description of its kernel, and
\item\label{enum:goal_relationship}
some understanding of the relationship between this kernel and the topological fundamental group.
\end{enumerate}

The motivic analogue of the absolute Galois group $\Gal(\bar{k}/k)$ of $k$
is given by the \emph{motivic Galois group},
here denoted by $\mathscr{G}^\rom{mot}(k)$.
This affine $\Q$-group scheme can be defined as the Tannaka dual of the category of Nori motives,
or equivalently as the spectrum of a certain \emph{motivic Hopf algebra}
constructed by Ayoub in~\cite{motivic_hopf_I,motivic_hopf_II}
(see~\cite{choudhury-gallauer} for the comparison).
By equipping the derived category of perverse motives with a well-behaved tensor product,
Terenzi shows in~\cite[Thm.~6.2]{luca} that $\MLS(X)$ is neutral Tannakian.
We denote its Tannaka dual by $G(X,x)$:
this is the motivic analogue of $\pi_1^\et(X,x)$.

The structure map of $X$ and the rational point $x$ induce a split surjection
$G(X,x) \to \scr{G}^\rom{mot}(k)$,
by way of the inverse image functors on perverse motives (see~\cite{ivorra-morel}).
This is our motivic analogue of~\heqref{eq:et_split_surj},
in line with goal~\heqref{enum:goal_splitting}.
Denote the kernel by $K(X,x)$.
By definition,
we have a homomorphism $\pi_1(X,x) \to K(X,x)$
and
(the Zariski closure of)
its image
is the motivic fundamental group $\pi_1^\rom{mot}(X,x)$.
Our main theorem
says that $\pi_1(X,x) \to K(X,x)$ is Malcev complete.
This implies firstly that $K(X,x) = \pi_1^\rom{mot}(X,x)$,
which we may view as a result in the direction of goal~\heqref{enum:goal_kernel}.
Then, the fact that $\pi_1(X,x) \to \pi_1^\rom{mot}(X,x)$ is Malcev complete,
is a result in the direction of goal~\heqref{enum:goal_relationship}.

In conclusion,
we have a split exact sequence of affine group schemes over $\Q$
\begin{equation}\begin{tikzcd}
1 \ar[r] & \pi_1^\rom{mot}(X,x) \ar[r] & G(X,x) \ar[r] & \scr{G}^\rom{mot}(k)
  \ar[l, bend right, start anchor = north west, end anchor = north east, yshift=-.5em]
  \ar[r] & 1,
\end{tikzcd}\end{equation}
and a homomorphism $\pi_1(X,x) \to \pi_1^\rom{mot}(X,x)$,
which is Malcev complete.

\subsubsection{Motivic structures on fundamental groups}\label{sec:background_hain}
The singular homology functors $H_i(\blank,\Q)$ on algebraic varieties over $\C$
factor through the category of mixed Hodge structures,
by the seminal work of Deligne.
Conjecturally,
they factor further,
through a (Tannakian) category of motives,
enjoying many useful properties.
By Hurewicz,
the functor $\pi_1(\blank,\blank)^\rom{ab} \otimes \Q$ on pointed algebraic $\C$-varieties
is isomorphic to $H_1(\blank,\Q)$,
so it too should factor through motives.
Put differently,
the abelianization of the fundamental group of an algebraic variety is canonically a motive.
It's natural to investigate whether a larger part of the fundamental group is a motive,
as passing to the abelianization is rather destructive.
In particular,
one can ask to what extent Hodge theory can be extended to the fundamental group,
i.e., what part of the fundamental group carries a natural mixed Hodge structure.

In this direction,
and improving upon~\cite{morgan},
Hain showed in~\cite{hain_unip_1} that
the unipotent completion of the fundamental group carries a natural mixed Hodge structure
(see Rmk.~\ref{rmk:intro_carry_MHS}).
Hain then went further in~\cite{hain}
and proved the following:
\begin{thm}[{\cite[Thm.~13.1]{hain}}]
Let $X$ be a smooth connected variety over $\C$,
and let $x$ be a complex point of $X$.
Let $V$ be a polarizable variation of Hodge structure on $X$,
such that the image of the monodromy representation $\pi_1(X,x) \to \GL(V_x)$
is Zariski dense in $\Aut(V_x,\langle,\rangle)$.
Then the Malcev completion of this monodromy representation
carries a canonical (real) mixed Hodge structure.
\end{thm}

Motivic strengthenings of Hain's unipotent theorem have been established in some cases.
For instance,
Deligne (\cite{deligne-P1-3pts}) equipped the unipotent completion of the fundamental group of a rational variety
with the structure of a compatible system of realizations, which is not too far from that of a motive.
This was strengthened in~\cite{deligne-goncharov},
where the unipotent completion of the fundamental group of a unirational variety defined over a number field
was equipped with the structure of a mixed Tate motive,
in Voevodsky's sense.

A motivic strengthening of Hain's theorem on Malcev completions is achieved in
\nref{Thm.}{thm:intro_motivic_hain}.
Moreover,
the second part of the Hodge theoretic result \nref{Thm.}{thm:hain_intro}
generalizes Hain's theorem,
by allowing arbitrary admissible variations.

\subsection{The Malcev completeness criterion}\label{sec:intro_malcev_criterion}
The proofs of these three theorems are based on a general criterion
for a homomorphism to be Malcev complete.
All the groups we consider are affine group schemes over a fixed field.
Let $P \to K$ be a group homomorphism,
and denote the restriction functor of (finite-dimensional) $K$-representations along this morphism by
$\Res^K_P\colon \Rep K \to \Rep P$.
It's an easy observation that
$P\to K$ is Malcev complete if and only if
\begin{itemize}
\item[(i)] $P \to K$ is surjective, and
\item[(ii)] the image of $\Res^K_P$ is stable under extension in $\Rep P$.
\end{itemize}

Let $K \twoheadrightarrow S$ be a quotient by a pro-unipotent normal subgroup.
We write $\Gamma(K,\blank)$ and $\Gamma(P,\blank)$ for $K$- and $P$-invariants respectively.
It is then not difficult to see that Malcev completeness of $P \to K$ is equivalent to
\begin{itemize}
\item[(iii)] the composition $P \to K \to S$ is surjective, and
\item[(iv)] the natural maps
\begin{align*}
R^i\Gamma(K,\blank) \to R^i\Gamma(P,\blank) \circ \Res^K_P,
\end{align*}
are isomorphisms for $i=0,1$.
\end{itemize}
In applications, the following theorem offers a condition which is easier to check than~(iv):
\begin{thm}[Malcev completeness criterion, Thm.~\ref{thm:M}]\label{thm:M_intro}
Let $G = K \rtimes H$ be a semi-direct product
and consider a homomorphism $P \to K$.
Let $K \twoheadrightarrow S$ be quotient by a pro-unipotent normal subgroup.
Then $P\to K$ is Malcev complete if
$P \to S$ is surjective and the natural maps
\begin{align*}
R^i\Gamma(K,\blank) \circ \Res^G_K \to R^i\Gamma(P,\blank) \circ \Res^G_P
\end{align*}
between functors $\Rep G \to \VecOp$ are isomorphisms for $i=0,1$.
\end{thm}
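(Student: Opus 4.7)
My plan is to invoke the equivalent characterisation of Malcev completeness stated just before the theorem: since the hypothesis includes condition~(iii), it suffices to upgrade the assumed isomorphism of functors on $\Rep G$ to the absolute statement that $R^i\Gamma(K,\blank)\to R^i\Gamma(P,\blank)\circ\Res^K_P$ is an isomorphism on all of $\Rep K$ for $i=0,1$. The bridge between the two statements is that every $K$-representation can be embedded as a subobject of (the restriction of) a $G$-representation.

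The pivotal intermediate step is an \emph{embedding lemma}: every finite-dimensional $V\in\Rep K$ is a $K$-subrepresentation of $\Res^G_K W$ for some finite-dimensional $W\in\Rep G$. I would prove this by noting that the unit $V\hookrightarrow\Res^G_K\,\mathrm{Ind}^G_K V$ of the restriction/induction adjunction is injective---the semi-direct product structure makes $\mathrm{Ind}^G_K V$ concrete, namely $V$-valued functions on $H$ with a twisted $K$-action---and that, since $G$ is affine, $\mathrm{Ind}^G_K V$ is the filtered union of its finite-dimensional $G$-subrepresentations, one of which contains $V$.

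Granted the lemma, the $i=0$ case is immediate: embedding $V\hookrightarrow W$ with $W$ coming from $G$ gives $V^K = V\cap W^K = V\cap W^P = V^P$ by the hypothesis applied to $W$, which also yields the surjectivity of $P\to K$. For $i=1$, I would pass to the short exact sequence $0\to V\to W\to V'\to 0$ of $K$-representations obtained from the embedding and compare the induced long exact sequences for $H^\bullet(K,\blank)$ and $H^\bullet(P,\blank)$. The first three terms agree by the $i=0$ case (applied to $V$, $W$, and the \emph{quotient} $V'$), and the $H^1(\blank,W)$ terms agree by hypothesis. A first diagram chase---using that $\ker\bigl(H^1(K,V)\to H^1(K,W)\bigr)$ is identified with the cokernel $(V')^K/\mathrm{im}(W^K)$, and that the same cokernel computes the kernel on the $P$-side---yields that $H^1(K,V)\to H^1(P,V)$ is injective for \emph{every} $V\in\Rep K$. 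A second diagram chase then feeds this injectivity back in, applied now to $V'$, to produce surjectivity and hence the desired isomorphism.

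The principal subtlety I anticipate is the ordering of the $H^1$ argument: injectivity must be proved uniformly on $\Rep K$ before it can be invoked for the auxiliary quotient $V'$ in the surjectivity step, so the proof is a non-circular two-step bootstrap that is easy to bungle if one attempts both halves simultaneously. The embedding lemma is conceptually what makes the argument go, but its proof is standard once local finiteness of $G$-representations is in hand; the actual work lies in the two diagram chases and in keeping the roles of $V$, $W$, and $V'$ straight on both the $K$- and $P$-sides.
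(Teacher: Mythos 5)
Your route is genuinely different from the paper's. The paper never upgrades the cohomological hypothesis to all of $\Rep K$: instead it reduces Theorem~M to the unipotent criterion (Thm.~U) by passing to the module categories $\Modfg(M)\simeq\Rep(R_\rom{u}K\rtimes H)$ and $\Modfg(T)$ over the relevant Hopf algebra objects (\nref{Prop.}{prop:module}), transports the cohomological assumption to that setting, and concludes via \nref{Prop.}{prop:group-iso} that the Malcev completion maps isomorphically onto $K$. Your plan---embed an arbitrary $V\in\Rep K$ into the restriction of a $G$-representation, prove the $i=0$ identity $V^K=V^P$ directly, then run the two long-exact-sequence chases (uniform injectivity of $H^1(K,V)\to H^1(P,V)$ first, then surjectivity, feeding the injectivity back in for the quotient $V'$)---is correct as a diagram argument, and if completed it gives a more elementary proof that bypasses Thm.~U and the universal-extension constructions of $U$ and $W$ altogether. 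What the paper's route buys is that it needs Thm.~U anyway for other applications and it proves the completeness statement directly at the level of regular representations; what yours buys is economy of machinery.

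Two points need shoring up. First, the embedding lemma: the natural map of the restriction--induction adjunction is the counit $\Res^G_K\Ind^G_K V\to V$, not a unit $V\to\Res^G_K\Ind^G_K V$, and a $K$-equivariant embedding of $V$ into $\Res^G_K\Ind^G_K V$ is not formal (in your function model the ``fibre'' over $h\in H$ is the $h$-twist of $V$, so one has to produce an algebraic family of intertwiners). The standard fix, and the place where the splitting $G=K\rtimes H$ is genuinely used, is the coaction embedding $V\hookrightarrow V_{\mathrm{triv}}\otimes\clg{O}(K)$ together with the observation that $\clg{O}(K)\simeq\clg{O}(G)^H$ carries a $G$-action (left translation) restricting to the regular representation of $K$; local finiteness then lands $V$ inside a finite-dimensional $G$-subrepresentation. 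Second, the endgame: the equivalence of Malcev completeness with (iii)+(iv) is asserted, not proved, in the introduction, and its nontrivial half is exactly the surjectivity of $P\to K$ (equivalently, closure of the essential image of $\Res^K_P$ under subobjects). Your aside that surjectivity ``also'' falls out of the $i=0$ computation is not right as stated: $V^K=V^P$ for all $V\in\Rep K$ holds, for instance, for a Borel subgroup inside $\mathrm{PGL}_2$, which is far from surjective. The hypothesis $P\twoheadrightarrow S$ must enter here, e.g.\ by induction on the length of $V$: subobjects of restrictions of semi-simple (i.e.\ $S$-) representations come from $S$ by \nref{Prop.}{prop:tannaka-mor-cor}, and the general case follows from the $i=1$ extension-lifting together with fullness and faithful exactness; alternatively one can quote the D'Addezio--Esnault-type lifting the paper's remark alludes to. With those two repairs your argument goes through; as written, the final step is a gap rather than a citation.
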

In other words,
when checking condition (iv) above,
it's enough to work with objects in the image of $\Res^G_K$.
Note that the group $H$ and its action on $K$ can be chosen completely arbitrary,
and therein lies the strength of the theorem.
The proof of \nref{Thm.}{thm:M_intro} works by reducing to the following special case:
\begin{thm}[Unipotent isomorphism criterion, Thm.~\ref{thm:U2}]\label{thm:U_intro}
Let $G = K \rtimes H$ be a semi-direct product
and consider a homomorphism $P \to K$.
Then $P\to K$ is an isomorphism if
$P$ is pro-unipotent and the natural maps
\begin{align*}
R^i\Gamma(K,\blank) \circ \Res^G_K \to R^i\Gamma(P,\blank) \circ \Res^G_P
\end{align*}
between functors $\Rep G \to \VecOp$ are isomorphisms for $i=0,1$.
\end{thm}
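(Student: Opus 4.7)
My plan is Tannakian: by Tannakian duality, $\phi \colon P \to K$ is an isomorphism if and only if the restriction functor $\Res^K_P \colon \Rep K \to \Rep P$ is an equivalence. I proceed in three stages, showing in turn that $K$ is pro-unipotent, that $\phi$ is surjective, and that $\ker \phi$ is trivial.

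\emph{Reduction to $K$ pro-unipotent.} Let $S$ denote the maximal pro-reductive quotient of $K$. Because $K$ is normal in $G$, the characteristic quotient $S$ is $H$-stable, so $S \rtimes H$ is naturally a quotient of $G = K \rtimes H$. Any $V \in \Rep(S \rtimes H)$ may be regarded as an element of $\Rep G$, and on it $P$ acts trivially (its image in $S$ vanishes, being pro-unipotent into a reductive group). The $i = 0$ hypothesis then forces $V^K = V$, so $S$ acts trivially on $V$. Since $S \hookrightarrow S \rtimes H$ is a closed immersion, every $U \in \Rep S$ is a subquotient of $V|_S$ for some such $V$, so $S$ acts trivially on all of $\Rep S$, whence $S = 1$.

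\emph{Surjectivity.} Applying the $i = 1$ hypothesis to the trivial representation $k \in \Rep G$ yields an iso $H^1(K, k) \cong H^1(P, k)$. With $K$ and $P$ pro-unipotent, $H^1(-, k)$ is the continuous dual of the abelianization, so this dualizes to an iso $\phi^{\rom{ab}} \colon P^{\rom{ab}} \xrightarrow{\sim} K^{\rom{ab}}$. Since surjectivity of a morphism of pro-unipotent groups is detected on abelianizations, $\phi$ is surjective.

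\emph{Essential surjectivity.} Let $N := \ker \phi$; it suffices to show that $N$ acts trivially on every $V \in \Rep P$. By the pro-unipotence of $P$, every such $V$ admits a filtration by $P$-subrepresentations with trivial graded pieces, so the claim reduces, by induction on the filtration length, to the following statement: given $V' \in \Rep K$ arising in the induction, every extension $0 \to V' \to V'' \to k \to 0$ in $\Rep P$ is already in $\Rep K$, i.e., the class in $H^1(P, V')$ lies in the image of the inflation $H^1(K, V') \to H^1(P, V')$. The main obstacle is that the hypothesis guarantees the $H^1$-iso only for $W \in \Rep G$, while the $V'$ arising in the induction is in general only a $K$-representation. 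The crux is therefore to extend the $H^1$-iso to the relevant class of $K$-representations. This is done using that $K \hookrightarrow G$ is a closed immersion (so every $V' \in \Rep K$ is a subquotient of some $W|_K$ with $W \in \Rep G$), together with careful use of the Hochschild--Serre long exact sequences for $K \triangleleft G$ and $N \triangleleft P$; the semi-direct product structure---especially the $H$-equivariance of the functors $R^i\Gamma(K,-)$ on $\Rep G$---is what allows this extension to go through without direct control over $H^2$.
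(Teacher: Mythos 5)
Your first stage contains a false principle: you dismiss the image of $P\to S$ on the grounds that a pro-unipotent group maps trivially into a pro-reductive one. That is the wrong direction --- reductive groups contain plenty of non-trivial unipotent subgroups (the unipotent upper-triangular subgroup of $\mathrm{SL}_2$, say), so nothing at this point forces $P\to S$ to vanish; assuming it is tantamount to assuming part of the conclusion. The intended conclusion ($S=1$, i.e.\ $K$ pro-unipotent) is nonetheless correct and follows from the $i=0$ hypothesis alone: for $0\neq V\in\Rep G$ the invariants $\Gamma(P,\Res^G_P V)$ are non-zero because $P$ is pro-unipotent, hence $\Gamma(K,\Res^G_K V)\neq 0$; since every $K$-representation is a subquotient of some $\Res^G_K V$ and unipotency passes to subquotients, $K$ is pro-unipotent (this is how the paper argues, \nref{Rmk.}{rmk:automatic-unipotency}). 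Your surjectivity step is fine once this is repaired, as it only uses injectivity of $H^1(K,F)\to H^1(P,F)$ and the fact that surjectivity of morphisms of pro-unipotent groups is detected on abelianisations.

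The real problem is your third stage, which is where the entire content of the theorem sits and which you do not prove. You correctly isolate the crux --- the hypotheses compare $H^1(K,\blank)$ and $H^1(P,\blank)$ only on coefficients restricted from $\Rep G$, whereas your induction needs the comparison for the unipotent $K$-representations $V'$ it produces --- but you then merely assert that this extension of the comparison ``goes through'' by careful use of Hochschild--Serre and $H$-equivariance. A d\'evissage from $\Res^G_K W$ ($W\in\Rep G$) to its subquotients does not close: the long exact sequences involve boundary maps into $H^2$, and the hypotheses give no control in degree $2$; avoiding any $H^2$ input is precisely the point of the statement. The paper circumvents the issue by a different mechanism: using the section $H\to G$, the Hopf algebra $\clg{O}(K)=f_*\bm1$ lives in $\Ind\Rep G$, and one constructs both $\clg{O}(K)$ and $\clg{O}(P)$ as universal iterated extensions ($W=\colim_n W_n$ in $\Ind\Rep G$ and $U=\colim_n U_n$ in $\Ind\Rep P$) whose successive extension data stay inside (the image of) $\Rep G$, so that the $i=0,1$ hypotheses can be applied at every stage; one then shows $\Res^G_P W\simeq U$, identifies $U$ and $W$ with the two regular representations via their universal properties, and concludes by \nref{Prop.}{prop:group-iso}. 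Without this construction, or a genuine argument replacing it, your third stage restates the theorem rather than proving it.
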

\begin{rmk}
Using Tannaka duality, it's easy to see that,
when $P$ is pro-unipotent,
$P \to K$ is an isomorphism if and only if condition~(iv) above holds.
Moreover, the assumptions in \nref{Thm.}{thm:U_intro} easily imply that $K$ too is pro-unipotent,
so that $S$ is trivial and the surjectivity of $K \to S$ is automatic,
really making it a special case of \nref{Thm.}{thm:M_intro}.
\end{rmk}

The following direct corollary to Thm.~\ref{thm:M_intro} has been suggested by Marco D'Addezio:
\begin{cor}
Suppose $\charac F = 0$.
Let $P \to G$ be a homomorphism
whose image is contained in the pro-unipotent radical $R_\rom{u}G$,
and suppose that $P$ is pro-unipotent.
Then $P \to R_\rom{u}G$ is an isomorphism if and only if
the natural maps
\begin{align*}
\Res^{G_\rom{red}}_1 \circ\, R^i\Ind_G^{G_\rom{red}} \to R^i\Gamma(P,\blank) \circ \Res^G_P
\end{align*}
are isomorphisms for $i=0,1$.
\end{cor}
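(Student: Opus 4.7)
The plan is to deduce the corollary directly from Theorem U by setting $K = R_\rom{u}G$ and letting $H$ be a Levi subgroup of $G$. Since we work over a field of characteristic zero, Mostow's theorem provides a splitting $G \cong R_\rom{u}G \rtimes G_\rom{red}$, so that the semi-direct product hypothesis of Theorem U is satisfied. By assumption, $P \to G$ factors through $R_\rom{u}G$, yielding the map $P \to K$, and $P$ is pro-unipotent, which is the only remaining hypothesis on $P$ appearing in Theorem U.

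The key point is to identify the two pairs of functors. Because $G_\rom{red} = G/R_\rom{u}G$, the induction functor $\Ind_G^{G_\rom{red}} \colon \Rep G \to \Rep G_\rom{red}$, which is right adjoint to the pullback along $G \twoheadrightarrow G_\rom{red}$, is computed by $V \mapsto V^{R_\rom{u}G}$ with its residual $G_\rom{red}$-action. Passing to derived functors and then forgetting the $G_\rom{red}$-action via $\Res^{G_\rom{red}}_1$ gives the plain vector space $H^i(R_\rom{u}G, V)$. In other words, as functors $\Rep G \to \VecOp$,
\begin{equation*}
\Res^{G_\rom{red}}_1 \circ R^i\Ind_G^{G_\rom{red}} \;=\; R^i\Gamma(R_\rom{u}G, \blank) \circ \Res^G_{R_\rom{u}G}.
\end{equation*}
With this identification in hand, the hypothesis of the corollary becomes exactly the hypothesis of Theorem U applied to $P \to K = R_\rom{u}G$ inside $G = K \rtimes G_\rom{red}$, and Theorem U then yields the ``if'' direction. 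The ``only if'' direction is immediate: if $P \to R_\rom{u}G$ is an isomorphism, restriction along $P \to G$ factors as $\Res^{R_\rom{u}G}_P \circ \Res^G_{R_\rom{u}G}$ with the first arrow an equivalence, whence $R^i\Gamma(P,\blank) \circ \Res^G_P$ and $R^i\Gamma(R_\rom{u}G, \blank) \circ \Res^G_{R_\rom{u}G}$ agree canonically.

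The only real point of care is the identification of $\Ind_G^{G_\rom{red}}$ with $R_\rom{u}G$-invariants together with the compatibility of the residual $G_\rom{red}$-action; this is a standard computation for the cohomology of affine group schemes (in the spirit of Hochschild--Serre), but should be recorded explicitly so that the hypothesis of the corollary matches term-for-term with that of Theorem U before the latter is invoked.
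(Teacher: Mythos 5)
Your argument is correct and is exactly the intended one: the paper states this as a direct corollary of Theorem~U without proof, and the route you take (Mostow's splitting $G \simeq R_\rom{u}G \rtimes G_\rom{red}$ in characteristic zero, the identification of $\Res^{G_\rom{red}}_1 \circ R^i\Ind_G^{G_\rom{red}}$ with $R^i\Gamma(R_\rom{u}G,\blank)\circ\Res^G_{R_\rom{u}G}$ \`a la Hochschild--Serre, then Theorem~U for the ``if'' direction and the trivial factorisation of $\Res^G_P$ for the ``only if'') is the same approach. Your remark that the compatibility of derived invariants with the residual $G_\rom{red}$-action should be recorded explicitly is well taken, and is standard (cf.\ Jantzen, I.6).
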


\begin{rmk}
Keep the setup as in \nref{Thm.}{thm:M_intro}.
The theorem takes three assumptions:
the cohomological assumptions for $i=0,1$,
and the surjectivity of $P \to S$.
The result is the Malcev completeness of $P \to K$,
which essentially has two parts:
the surjectivity of $P \to K$ (listed as (i) above),
and condition (ii) on stability under extension.
If one only cares about the first consequence,
it is shown in~\cite{marco_private} that the $i=1$ condition isn't needed.
More precisely,
D'Addezio--Esnault
show in ibid.\ how to use results from~\cite[Appendix~A]{daddezio-esnault}
to lift surjectivity of $P \to S$ to surjectivity of $P \to K$
when $\Gamma(K,\blank) \circ \Res^G_K \to \Gamma(P,\blank) \circ \Res^G_P$ is an isomorphism.
\end{rmk}

\subsection{A sketch of the proofs}\label{sec:intro_sketch_pfs}
Keep the notation as in the unipotent isomorphism criterion (\nref{Thm.}{thm:U_intro}).
The choice of a section $H \to G$ makes $\clg{O}(K)$ into an algebra object in $\Ind\Rep G$.
To prove \nref{Thm.}{thm:U_intro}, we need to show that $P \to K$ is an isomorphism,
or equivalently that $\clg{O}(K) \to \clg{O}(P)$ is an isomorphism.
We do this by constructing
$\clg{O}(P)$ and $\clg{O}(K)$
as explicit colimits
in $\Ind\Rep P$ and $\Ind\Rep G$, respectively,
and then showing that the natural map between them in $\Ind\Rep P$ is an isomorphism.
They are constructed as certain universal iterated extensions starting with the trivial object $\bm 1$,
and this is carried out in Sections~\ref{subsec:U} and~\ref{subsec:W}.
The rest of \nref{Section}{sec:thmU} is then dedicated to showing that
these constructions work as intended,
and finally deducing the result.

This construction of the regular representation of a pro-unipotent group
as a universal iterated extension has appeared in the literature in several special cases,
for example in \cite{hadian,AIK,lazda,log-de-Rham-arx}.
As far as we know,
the general form of \nref{Thm.}{thm:U_intro} we prove has not appeared before,
though it might have been known to experts.

To prove the Malcev completeness criterion (\nref{Thm.}{thm:M_intro}),
we reduce to the unipotent case.
The details of this reduction are a bit technical,
but the idea is simple:
$K$ and the Malcev completion of $P \to K$
are by assumption both extensions
of $S$ by pro-unipotent groups.
With this in mind we reduce to showing that these pro-unipotent groups are isomorphic,
and finish the job using \nref{Thm.}{thm:U_intro}.
The heart of this reduction argument is \nref{Prop.}{prop:module},
which might also be of independent interest.

In order for \nref{Thm.}{thm:M_intro} to be applicable in the Hodge-theoretic context of \nref{Thm.}{thm:hain_intro},
the assumptions need to be checked,
but let us first provide a dictionary between the groups appearing in the two theorems.
The role of $P$ is played by (the pro-algebraic completion of) $\pi_1(X,x)$,
while that of $G$ is played by the Tannaka dual of the category of admissible variations on $X$.
The group $H$ is then taken to be the \emph{Hodge group},
i.e., the Tannaka dual of the category of rational mixed Hodge structures.
Finally, $K$ is in this context the kernel of the natural homomorphism\footnote{%
  It's induced under Tannaka duality by
  the functor sending a mixed Hodge structure to
  the associated constant variation.}
$G \to H$,
and the map from $P \to G$ is easily seen to factor through $K$.
The group-cohomological assumption is then established using the theory of mixed Hodge modules,
in \nref{Prop.}{prop:enriched_coh_assump} and \nref{Lemma}{lem:MHM_ambient},
and the surjectivity assumption is established using Prop.~\ref{prop:enriched_surjectivity} and Deligne's semi-simplicity theorem.
This gives the Malcev completeness of the relevant map,
and the second part follows as a corollary,
see \nref{Prop.}{prop:generalized_hain_enriched}.

In the motivic context of \nref{Thm.}{thm:easy_main_thm},
we use results from~\cite{ivorra-morel} on perverse motives
to check the cohomological assumption of \nref{Thm.}{thm:M_intro} (see~\ref{lem:motivic_ambient}).
The surjectivity assumption again boils down to Deligne's semi-simplicity theorem,
as well as a well-behaved theory of weights on perverse motives.

\subsection{Acknowledgements}
I would like first and foremost to thank my PhD advisor Joseph Ayoub,
without whom this project wouldn't even exist.
Secondly, I'm grateful to Andrew Kresch, Luca Terenzi and Dan Petersen for many useful discussions,
and the algebraic geometry group at the University of Z\"urich for the extremely pleasant and motivating working environment
I benefited from when the bulk of this work was done.
Lastly, I want to thank
Marco D'Addezio
and two anonymous referees
for many useful comments on previous versions of this text.

\newpage
\part{Affine group schemes over a field}\label{part:abstract}
\bigskip\bigskip

\section{Preliminaries}
We fix a field $F$ and assume all additive categories to be $F$-linear.
The category of finite-dimensional $F$-vector spaces will be denoted by $\VecOp$.
We write $\bm1$ for the unit object in a monoidal category.
When there are several monoidal categories around,
it will be understood from context which unit object we mean.

By \emph{pro-algebraic group} we mean an affine group scheme over $F$.
% and such a group is \emph{algebraic} if it is of finite type. %% Never used.
% Any pro-algebraic group is a cofiltered limit of algebraic groups.

We will use the term \emph{tensor category} to mean an $F$-linear symmetric monoidal category.
That is, an $F$-linear category $\clg{C}$ equipped with an $F$-bilinear functor
$(\blank\otimes\blank)\colon \clg{C}\times\clg{C}\to\clg{C}$,
a unit object $\bm1$,
as well as compatible natural isomorphisms encoding associativity, commutativity,
and the unit relations,
that satisfy the usual axioms.
We will say a functor $f$ between tensor categories
is \emph{monoidal} when it's equipped with an isomorphism $f(\bm1) \simeq \bm1$
and a natural isomorphism $f(X\otimes Y) \simeq f(X)\otimes f(Y)$,
both compatible with the associator and commutator isomorphisms of the two categories.
Finally, a natural transformation $\phi\colon f\to g$ between monoidal functors $f$ and $g$
is \emph{monoidal} when it's compatible with the monoidal structures of $f$ and $g$.

An object $X$ in a tensor category $\clg{C}$ is \emph{(strongly) dualizable}
if there exists an object $X^\vee$ (called a \emph{dual} of $X$)
and maps
$\ev\colon X\otimes X^\vee \to \bm1$,
$\coev\colon \bm1 \to X^\vee \otimes X$
that turn
$X^\vee\otimes(\blank)$
into a right adjoint of
$X\otimes(\blank)$.
Moreover, $\clg{C}$ is \emph{rigid} if every object in $\clg{C}$ is dualizable.
Duality is reflexive, in the sense that $X$ is a dual of $X^\vee$.
Monoidal functors preserve duals.

\subsection{Unipotency}\label{sec:unipotency}
\begin{defn}
Let $\clg{A}$ be an abelian category and $f\colon\clg{C}\to\clg{A}$ a functor.
Say an object $U$ in $\clg{A}$ is \emph{$f$-unipotent} if
it can be built in a finite number of steps from the essential image of $f$,
using extensions.
More precisely, if there exists a filtration
\begin{align*}
0=U_{-1}\subseteq U_0\subseteq\dotsc\subseteq U_n=U,
\end{align*}
such that the quotients $U_i/U_{i-1}$ are in the essential image of $f$.
We call such a filtration an \emph{$f$-unipotent filtration} of $U$, of \emph{length} $n$.
If every object of $\clg{A}$ is $f$-unipotent,
then $\clg{A}$ is said to be \emph{$f$-unipotent}
(or sometimes \emph{relatively unipotent over $\clg{C}$}).
\end{defn}

\begin{defn}
Let $U$ be an $f$-unipotent object, as above.
Let $n$ be the smallest integer such that $U$ admits an $f$-unipotent filtration of length $n$.
We say the \emph{$f$-unipotency index} of $U$ is $n$.
\end{defn}

\begin{rmk}
The usefulness of unipotency is that it allows for \emph{unipotent induction},
that is,
induction on the unipotency index.
This technique lets us prove many statements
by checking that they hold for a smaller class of objects,
and that they behave well with respect to extension.
\end{rmk}

\begin{lem}\label{lem:unip-ext}
Unipotency is stable under extension.
\end{lem}
\begin{proof}
Let $f\colon \clg{C}\to\clg{A}$ be as above.
Consider an extension
\begin{align*}
0\to U\to A\to W\to 0,
\end{align*}
where $U$ and $W$ are both $f$-unipotent.
In the case when $W=f(C)$,
$A$ is unipotent by definition.
We now proceed by induction on the unipotency index of $W$.
Let
\begin{align*}
0\to W'\to W\to f(C)\to 0,
\end{align*}
such that $W'$ is of a lower unipotency index than $W$.
By induction, any extension of $W'$ by something $f$-unipotent is unipotent.
The pullback $A'$ of $A\to W$ and $W'\to W$ is therefore unipotent,
and $A$ is an extension
\begin{align*}
0\to A'\to A\to f(C)\to 0,
\end{align*}
by standard arguments. % e.g. 9-lemma
\end{proof}

\begin{defn}
With $f\colon \clg{C}\to\clg{A}$ as above,
we say that $\clg{A}$ is \emph{weakly $f$-unipotent}
if every non-zero object of $\clg{A}$ admits a non-zero subobject in the essential image of $f$.
\end{defn}

Unipotency immediately implies weak unipotency,
but the converse is only true under a finiteness condition.

\begin{prop}
Let $f\colon \clg{C}\to \clg{A}$ be as above.
\begin{enumerate}[(i)]
\item If $\clg{A}$ is $f$-unipotent,
then it is weakly $f$-unipotent.
\item If $\clg{A}$ is weakly $f$-unipotent,
then any object in $\clg{A}$ of finite length is $f$-unipotent.
\end{enumerate}
\end{prop}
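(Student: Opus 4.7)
The plan is to handle the two parts separately: part (i) by directly unwinding the definition of an $f$-unipotent filtration, and part (ii) by induction on length, with Lemma~\ref{lem:unip-ext} doing the heavy lifting in the inductive step. Neither argument should be difficult; the proposition is essentially a structural consequence of the definitions.

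For part (i), I would take a non-zero object $A$ in $\clg{A}$ together with an $f$-unipotent filtration
\begin{align*}
0 = U_{-1} \subseteq U_0 \subseteq \dotsb \subseteq U_n = A,
\end{align*}
and then choose the smallest index $j$ with $U_j \neq 0$. By minimality $U_{j-1} = 0$, so $U_j \cong U_j/U_{j-1}$ lies in the essential image of $f$. It is a non-zero subobject of $A$, witnessing weak $f$-unipotency.

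For part (ii), I would induct on the length of the object $A$, the base case being trivial. If $A$ has length $n \geq 1$, then $A \neq 0$, so weak $f$-unipotency furnishes a non-zero subobject $U \subseteq A$ in the essential image of $f$; in particular $U$ is itself $f$-unipotent. Because $A$ has finite length and $U \neq 0$, the quotient $A/U$ has length strictly less than $n$, so the inductive hypothesis makes it $f$-unipotent. Applying Lemma~\ref{lem:unip-ext} to the extension $0 \to U \to A \to A/U \to 0$ then gives that $A$ is $f$-unipotent, completing the induction. The only thing one really needs to be careful about is that the finite length hypothesis is essential here: without it, one could not guarantee termination of the process of peeling off subobjects coming from $f$, so weak $f$-unipotency would not be enough to reach a bona fide finite filtration.
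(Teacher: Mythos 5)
Your proposal is correct and follows essentially the same route as the paper: part (i) is the same observation about the first non-zero step of a unipotent filtration, and part (ii) peels off a non-zero subobject in the essential image of $f$ and invokes \nref{Lemma}{lem:unip-ext}, exactly as the paper does. The only cosmetic difference is that you organise part (ii) as an induction on length, whereas the paper iterates the peeling process until it terminates and then ``travels back'' along the extensions --- the same argument in different clothing.
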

\begin{proof}
(i) Let $A\in\clg{A}$ be non-zero.
Fix an $f$-unipotent filtration $A_i$ of $A$ as above.
Then the minimal $i$ such that $A_i$ is non-zero gives the desired subobject.

(ii) Let $A$ be non-zero of finite length.
We get a short exact sequence
\begin{align*}
0\to f(C)\to A\to A_1\to 0.
\end{align*}
Then $A_1$ has a non-zero subobject $f(C_1)$,
and we iterate this until,
by finiteness of the length of $A$,
we get $A_n=f(C_n)$.
Using that unipotency is closed under extension (\nref{Lemma}{lem:unip-ext}),
we conclude that the $A_i$ and finally $A$ are all $f$-unipotent.
\end{proof}

\begin{rmk}
When $\clg{A}$ is monoidal, $\clg{C}=\VecOp$, and $f\colon \VecOp\to \clg{A}$ is monoidal,
then $f$ is uniquely determined, up to monoidal natural isomorphism
(cf. \nref{Rmk.}{rmk:fibre-section}).
In this case, we usually just say that $\clg{A}$ is \emph{unipotent},
without reference to $f$.
\end{rmk}

\subsection{Neutral Tannakian categories}\label{sec:tannaka}
The notion of a (neutral) Tannakian category is central in the sequel.
A \emph{neutral Tannakian category} (over $F$) is
a rigid abelian tensor category $(\clg{A},\otimes,\bm1)$
such that $\End(\bm1)=F$,
and which admits a faithful, exact, monoidal functor $\clg{A}\to\VecOp$:
such a functor is called a \emph{(neutral) fibre functor}.
A \emph{neutralized Tannakian category} is a Tannakian category together with a neutral fibre functor.

\begin{rmk}\label{rmk:exact-tens-iHom}
The tensor product is required to be $F$-bilinear,
and it is automatically exact in both arguments (\cite[Prop. 1.17]{deligne-milne}).
The rigidity implies existence of an internal Hom functor,
which is (right and left) adjoint to the tensor product,
and which is hence also exact in both arguments.
We denote it by $\iHom(\blank,\blank)$.
\end{rmk}

By \emph{morphism} of neutral Tannakian categories $\clg{A}$ and $\clg{B}$
we shall mean an exact, faithful, monoidal functor.
We will always write such a functor decorated with a raised asterisk,
e.g.\ $p^*\colon \clg{A}\to \clg{B}$.
These functors always have ind-right adjoints,
i.e., functors $p_*\colon \Ind\clg{B}\to\Ind\clg{A}$,
for which we use the same letters but lowered asterisks.
These ind-right adjoints are left exact
and can always be right derived,
as the ind-category of a neutral Tannakian category has enough injectives
(see, e.g., \cite[Prop.~I.3.9]{jantzen}).
By \emph{morphism} of neutralized Tannakian categories $(\clg{A},f^*)$ and $(\clg{B},\varphi^*)$
we shall mean a morphism $p^*\colon\clg{A}\to\clg{B}$ together with a monoidal isomorphism $\varphi^*\circ p^* \simeq f^*$.

This gives 2-categories $\nTan$ and $\nTanpt$ of neutral and neutralized Tannakian categories, respectively,
after we define what the 2-morphisms are.
In $\nTan$, they are the monoidal, invertible natural transformations.
In $\nTanpt$, they are the same, with the additional requirement
that they be compatible with the extra data in the morphisms (the monoidal isomorphisms above).
% NOTE: consequently, 2-hom-sets in nTanpt are singletons or empty.

\begin{rmk}[On sections of fibre functors.]\label{rmk:fibre-section}
A fibre functor $f^*$ of a Tannakian category $\clg{B}$ always admits a section in the 2-category above.
Fix an equivalence of $\VecOp$ with its skeleton, consisting of objects $\bm1^{\oplus n}$, $n\geq 0$.
On these objects, we simply define the section $s^*$ in the only way we can:
$\bm1$ has to go to $\bm1$,
and direct sums must be respected.
This is clearly a section,
since $f^*$ is monoidal and respects direct sums.
Moreover, we can explicitly describe the ind-right adjoint $s_*$,
which turns out to restrict to an honest right adjoint of $s^*$.
Indeed, there's a natural isomorphism $s_*\simeq\Hom_{\clg{A}}(\bm1,\blank)$.
This follows immediately from the fact that $\iHom_{\VecOp}(\bm1,\blank)$
is the identity functor on $\VecOp$.
\end{rmk}

Given a neutralized Tannakian category $(\clg{B},f^*)$,
Tannaka duality produces a pro-algebraic group,
which we denote $\scr{G}(f^*)$,
or $\scr{G}(\clg{B},f^*)$, or $\scr{G}(\clg{B})$, depending on what we want to emphasize.
We call this group the \emph{Tannaka dual} of $(\clg{B},f^*)$.
This group is such that $f^*$ naturally factors
through the forgetful functor $\Rep \scr{G}(f^*)\to\VecOp$,
and such that $\clg{B}\to\Rep \scr{G}(f^*)$ is a (monoidal) $F$-linear equivalence.
We also have a canonical isomorphism $\scr{G}(\Rep G)\simeq G$ for all $G$.

\begin{rmk}[On Tannakian categories and unipotency]\label{rmk:tannaka-unip}\ \\
\vspace{-1em}
\begin{itemize}
\item A pro-algebraic group $G$ is pro-unipotent if and only if
$\Rep G$ is unipotent.
\item All objects in a neutral Tannakian category are of finite length,
so for these categories, weak unipotency and unipotency coincide.
\item If $V$ is a unipotent $G$-representation, then any subquotient of $V$ in $\Rep G$
is unipotent. (Proof: Enough to check for subobjects of $V$, by rigidity.
Given a unipotent filtration on $V$,
the induced filtration on a subobject is also a unipotent filtration,
by the fact that any subobject of $\bm1^n$ is of the form $\bm1^m$.)\qed
\end{itemize}
\end{rmk}

Tannaka duality is functorial:
given a morphism $p^*$ of neutralized Tannakian categories as above,
we get an induced group-scheme homomorphism
$p\colon \scr{G}(\varphi^*)\to \scr{G}(f^*)$,
which we always write using the same symbol,
but without the asterisk.
Conversely,
given a group homomorphism $p\colon G\to H$,
we get a morphism of Tannakian categories $p^*\colon\Rep H\to\Rep G$,
namely restriction along $p$.
The ind-right adjoint $p_*$ is in this context given by induction along $p$
(cf.\ \nref{Section}{sec:res-ind-inv}).

The following proposition is an example of how properties of group homomorphisms
are reflected on the Tannakian side:
\begin{prop}[{\cite[Prop.~2.21]{deligne-milne}}]\label{prop:tannaka-mor-cor}
Let $p\colon G\to H$ be a homomorphism of pro-algebraic groups.
Then,
\begin{enumerate}
\item $p$ is a closed immersion (injective) if and only if
every object in $\Rep G$ is isomorphic to a subquotient of an object in the image of $p^*$,
and
\label{enum:injective}
\item $p$ is faithfully flat (surjective) if and only if
for every subobject
$X\hookrightarrow p^*Y$ in $\Rep G$,
there exists a subobject $Y'$ of $Y$ such that $p^*Y' \simeq X$.
\label{enum:surjective}
\end{enumerate}
\end{prop}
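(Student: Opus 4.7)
The plan is to translate both statements into properties of the induced Hopf algebra map $p^\sharp\colon\clg{O}(H)\to\clg{O}(G)$: being a closed immersion corresponds to surjectivity of $p^\sharp$, and being faithfully flat corresponds to its injectivity. The bridge between these algebraic conditions and the functor $p^*$ is the regular representation: every finite-dimensional $G$-representation embeds into some $\clg{O}(G)^n$, and $\clg{O}(G)$ is the filtered colimit of its finite-dimensional $G$-subrepresentations (and similarly for $H$). Throughout, finite-dimensional reductions can be made object by object, so the pro-algebraic setting introduces no real complication beyond the algebraic one.

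For the first claim, the forward implication amounts to the observation that a surjection $p^\sharp$ induces a surjection $p^*\clg{O}(H)\twoheadrightarrow\clg{O}(G)$ of ind-$G$-representations. Hence any finite-dimensional $V\hookrightarrow\clg{O}(G)^n$ lifts to a finite-dimensional subobject $\tilde V\subseteq p^*W$ for some finite-dimensional $H$-subrepresentation $W\subseteq\clg{O}(H)^n$, exhibiting $V$ as a subquotient of $p^*W$. The converse is obtained by noting that the image of $p^\sharp$ contains all matrix coefficients of the hypothesised subquotient presentations, and these exhaust $\clg{O}(G)$.

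For the second claim, the forward implication relies on faithfully flat descent, which gives $(p^*Y)^G = Y^H$ for every $Y\in\Rep H$. Applied to internal Homs this yields fullness, and applied to subobjects of $p^*Y$ (a $G$-invariant subspace being automatically $H$-invariant) it gives the subobject lifting property. For the converse, I factor $p$ through its scheme-theoretic image as $G\twoheadrightarrow H'\xrightarrow{\iota} H$; the forward direction just established, applied to the surjective first factor, shows that its restriction functor is fully faithful with image closed under subobjects, and these properties therefore transfer from $p^*$ to $\iota^*$. Combining the first claim (which presents every $H'$-representation as a subquotient $A/B\subseteq\iota^*Y$) with the closure-under-subobjects property (which upgrades $A$ and $B$ to $\iota^*A'$ and $\iota^*B'$ for $H$-subobjects of $Y$), one deduces $A/B\simeq\iota^*(A'/B')$, so $\iota^*$ is essentially surjective, hence an equivalence, and Tannaka duality then forces $\iota$ to be an isomorphism.

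The main obstacle I anticipate is the reverse direction of the second claim, specifically the concluding step where subquotient presentations are lifted back to the $H$-level. Fullness alone is not enough to exclude a proper closed subgroup $H'\subsetneq H$; the subobject condition must be invoked simultaneously, and I must use exactness of $\iota^*$ to turn the merely qualitative ``subquotient of $\iota^*Y$'' guaranteed by the first claim into an honest isomorphism $\iota^*(A'/B')$ in the essential image. The remaining bookkeeping---checking that a morphism of $H$-subobjects $B'\subseteq A'$ can be recovered from its $\iota^*$-image, and that the various isomorphisms respect subobject classes---uses fully faithful functors reflecting isomorphisms and monomorphisms, and should be routine.
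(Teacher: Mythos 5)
Your argument is correct. Note that the paper does not give a proof of this proposition at all: it is imported from Deligne--Milne (Prop.~2.21), and what you have written is essentially the standard argument found there --- translate both statements into surjectivity/injectivity of $p^\sharp\colon\clg{O}(H)\to\clg{O}(G)$, use the regular representation and matrix coefficients for part (1), and for part (2) factor $p$ through its image $H'$ and show that fullness together with stability under subobjects forces $\iota^*\colon\Rep H\to\Rep H'$ to be an equivalence, so that Tannaka duality gives $H'=H$. Two minor points worth tightening: in the forward direction of (2), the statement that a $G$-stable subspace of $p^*Y$ is automatically $H$-stable is not literally an application of $(p^*Y)^G=Y^H$; both facts follow from injectivity of $p^\sharp$ (for the subspace statement, tensor with a linear complement of $X$ in $Y$ and use that $\mathrm{id}\otimes p^\sharp$ is injective). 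And the factorisation $G\twoheadrightarrow H'\hookrightarrow H$ through the scheme-theoretic image, with the first map faithfully flat, rests on the standard but nontrivial fact that $\clg{O}(G)$ is faithfully flat over the Hopf subalgebra $\im(p^\sharp)$; you should either cite this or note that it is part of the same package of foundational results you are invoking.
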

\begin{proof}
This is essentially shown in \cite[Prop.~2.21]{deligne-milne},
the only difference being that the second part also includes the condition that $p^*$ is full.
However,
fullness follows from having a subobject stable image.
Indeed,
let $Y, Y'$ be representations of $H$,
and let $\varphi\colon p^*Y \to p^*Y'$ be a morphism.
It corresponds to a subobject
$\Gamma_\varphi \subseteq p^*Y \oplus p^*Y' \simeq p^*(Y \oplus Y')$,
namely its graph.
Lifting this to a subobject of $Y \oplus Y'$
amounts exactly to
lifting $\varphi$ to a map $Y \to Y'$.
\end{proof}

With the second part of this proposition in mind,
we make the following definition:
\begin{defn}\label{defn:tannak_subcat} % NOTE: should be strictly full?
A \emph{Tannakian subcategory} of a Tannakian category is
a full abelian subcategory closed under direct sums, tensor products, taking duals, and taking subquotients.
\end{defn}

Given a homomorphism $G \to H$ of pro-algebraic groups,
we sometimes denote by
$\Res^H_G \colon \Rep H \to \Rep G$
the restriction functor from $H$-representations to $G$-representations.

\begin{lem}\label{lem:im_of_res_from_im}
Given a homomorphism $p\colon G\to H$,
the smallest Tannakian subcategory of $\Rep G$ containing $\im(p^*)$
is the essential image of $\Res^{\im(p)}_G$.
\end{lem}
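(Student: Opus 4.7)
The plan is to factor $p$ as $G \twoheadrightarrow \im(p) \hookrightarrow H$, which gives a factorisation $p^* = \Res^{\im(p)}_G \circ \Res^H_{\im(p)}$ of the restriction functor. The proof then splits into showing two things: (a) the essential image of $\Res^{\im(p)}_G$ is a Tannakian subcategory of $\Rep G$, and (b) every object in this essential image lies in the Tannakian subcategory generated by $\im(p^*)$. Combined with the obvious inclusion $\im(p^*) \subseteq \im(\Res^{\im(p)}_G)$, these two facts identify the smallest Tannakian subcategory containing $\im(p^*)$ with the essential image of $\Res^{\im(p)}_G$.

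For (a), I would first note that $\Res^{\im(p)}_G$ is an exact monoidal functor, so its essential image is automatically closed under direct sums, tensor products, and duals. Since $G \twoheadrightarrow \im(p)$ is surjective, \nref{Prop.}{prop:tannaka-mor-cor}\eqref{enum:surjective} tells us that $\Res^{\im(p)}_G$ is full and that every subobject of a restricted object is itself (isomorphic to) a restricted object. Fullness plus exactness upgrades this closure under subobjects to closure under subquotients (using that any quotient of $\Res^{\im(p)}_G V$ is the cokernel of a map $\Res^{\im(p)}_G W \to \Res^{\im(p)}_G V$ which by fullness lifts to $\Rep\im(p)$, and then exactness). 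This shows $\im(\Res^{\im(p)}_G)$ is a Tannakian subcategory in the sense of \nref{Def.}{defn:tannak_subcat}.

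For (b), the key input is that $\im(p) \hookrightarrow H$ is a closed immersion. By \nref{Prop.}{prop:tannaka-mor-cor}\eqref{enum:injective}, every object of $\Rep\im(p)$ is a subquotient of some $\Res^H_{\im(p)} W$ with $W \in \Rep H$. Applying the exact functor $\Res^{\im(p)}_G$ shows that every object in $\im(\Res^{\im(p)}_G)$ is a subquotient of $p^* W = \Res^{\im(p)}_G \Res^H_{\im(p)} W$ for some $W \in \Rep H$. Hence it lies in any Tannakian subcategory containing $\im(p^*)$.

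I do not expect a serious obstacle here; the argument is essentially a bookkeeping exercise built on the two halves of \nref{Prop.}{prop:tannaka-mor-cor} applied to the epi--mono factorisation of $p$. The most delicate point is (a), where one must take a little care to deduce closure under quotients (and hence subquotients, via rigidity) from the statement in \nref{Prop.}{prop:tannaka-mor-cor}\eqref{enum:surjective}, which is literally phrased for subobjects; this follows cleanly by combining fullness and exactness as sketched above.
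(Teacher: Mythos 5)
Your proof is correct, but it runs in the opposite direction to the one in the paper, and the comparison is worth recording. The paper starts from the abstract side: it lets $\clg{T}$ be the smallest Tannakian subcategory containing $\im(p^*)$, dualises to get a factorisation $G \to \scr{G}(\clg{T}) \to H$ of $p$, and then uses \nref{Prop.}{prop:tannaka-mor-cor} in the ``category implies group'' direction — surjectivity of $G \to \scr{G}(\clg{T})$ is immediate from $\clg{T}$ being a Tannakian subcategory, while injectivity of $\scr{G}(\clg{T}) \to H$ requires knowing that every object of $\clg{T}$ is a subquotient of an object of $\im(p^*)$, which the paper extracts from an unproved ``elementary fact'' that the subquotient-closure of a full, monoidal, dual-closed subcategory is again Tannakian; identifying $\scr{G}(\clg{T})$ with $\im(p)$ then gives the statement. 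You instead start from the known epi--mono factorisation $G \twoheadrightarrow \im(p) \hookrightarrow H$ and use both halves of \nref{Prop.}{prop:tannaka-mor-cor} in the ``group implies category'' direction: part~\ref{enum:surjective} applied to $G \twoheadrightarrow \im(p)$ (plus fullness and exactness, to pass from subobjects to subquotients) shows the essential image of $\Res^{\im(p)}_G$ is a Tannakian subcategory, and part~\ref{enum:injective} applied to $\im(p) \hookrightarrow H$ shows everything in that essential image is a subquotient of an object of $\im(p^*)$, so it is the smallest such subcategory. What your route buys is that it never takes a Tannaka dual of an abstractly defined subcategory and never needs the paper's unproved elementary fact — indeed your part (a) effectively proves that fact in the relevant instance, with the small extra care you correctly flag about upgrading closure under subobjects to closure under subquotients. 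What the paper's route buys is brevity once that elementary fact is granted. Both arguments are sound; yours is the more self-contained of the two.
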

\begin{proof}
Let $\clg{T}\subseteq\Rep(G)$ denote the smallest Tannakian subcategory containing $\im(p^*)$.
Note that we have a factorization
$G \to \scr{G}(\clg{T}) \to H$
of $p$,
and that the first map is surjective by \nref{Prop.}{prop:tannaka-mor-cor}.\ref{enum:surjective}.
Next,
we check that
$\scr{G}(\clg{T}) \to H$
is injective by \nref{Prop.}{prop:tannaka-mor-cor}.\ref{enum:injective}.
Thus,
we need to show that every object in $\clg{T}$
is a subquotient of an object in the essential image of $p^*$.
But this follows from the following elementary fact:
if you take a full, monoidal subcategory of a Tannakian category,
closed under taking duals,
then the closure of this subcategory under taking subquotients is a Tannakian category.
Using this fact for the monoidal subcategory $\im(p^*)$ of $\Rep G$,
we get that $\clg{T}$ is its closure under forming subquotients,
by the minimality of $\clg{T}$.
\end{proof}

\begin{lem}\label{lem:unip_ker}
A homomorphism $p\colon G\to H$ has pro-unipotent kernel
if and only if
the category $\Rep G$ is $\Res^{\im(p)}_G$-unipotent.
(Equivalently,
$\Rep G$ is generated under extensions and subquotients by its full subcategory $\im(p^*)$.)
\end{lem}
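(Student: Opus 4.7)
The plan is to reduce everything to the study of the closed normal subgroup $N := \ker(p)$ of $G$, via the characterisation of pro-unipotency in terms of representations. By the first bullet of \nref{Rmk.}{rmk:tannaka-unip}, $N$ is pro-unipotent if and only if $\Rep N$ is unipotent, which by the preceding proposition (together with the finite length of objects in $\Rep N$) amounts to saying every non-zero $N$-representation contains a non-zero $N$-invariant vector. The key observation driving both directions is that for $V \in \Rep G$, the $N$-invariants $V^N$ form a $G$-subrepresentation of $V$ (because $N$ is normal), and the induced $G$-action on $V^N$ factors through $G/N \simeq \im(p)$, so that $V^N$ lies in the essential image of $\Res^{\im(p)}_G$.

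For the forward direction, assume $N$ is pro-unipotent and let $V \in \Rep G$ be non-zero. The subobject $V^N$ of $V$ is non-zero by the characterisation above applied to $\Res^G_N V$, and it lies in the essential image of $\Res^{\im(p)}_G$ by the key observation. Thus $\Rep G$ is weakly $\Res^{\im(p)}_G$-unipotent, hence $\Res^{\im(p)}_G$-unipotent by finite length of its objects.

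For the reverse direction, I would use that $N \hookrightarrow G$ is a closed immersion to apply \nref{Prop.}{prop:tannaka-mor-cor}.\ref{enum:injective}: every $N$-representation $W$ is a subquotient of $\Res^G_N V$ for some $V \in \Rep G$. By hypothesis $V$ admits a filtration whose subquotients lie in the essential image of $\Res^{\im(p)}_G$, i.e.\ have $G$-action factoring through $\im(p)$, so that $N$ acts trivially on them. Hence $\Res^G_N V$ is a unipotent $N$-representation, and by the third bullet of \nref{Rmk.}{rmk:tannaka-unip} so is the subquotient $W$. Thus every $N$-representation is unipotent, i.e.\ $N$ is pro-unipotent.

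The equivalence with $\Rep G$ being $p^*$-unipotent in the parenthetical reformulation is a direct consequence of \nref{Lemma}{lem:im_of_res_from_im}: closing $\im(p^*)$ under subquotients yields exactly $\im(\Res^{\im(p)}_G)$, so the extension-closure of the former equals that of the latter. I don't expect any serious obstacle; the content is just careful bookkeeping around the normal subgroup $N$, with the passage from weak to strong unipotency via the finite-length hypothesis being the only nontrivial input.
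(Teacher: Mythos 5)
Your proposal is correct and follows essentially the same route as the paper's proof: for the reverse direction you restrict a unipotent filtration along the closed immersion $\ker(p)\hookrightarrow G$ and use \nref{Prop.}{prop:tannaka-mor-cor}.\ref{enum:injective} together with the stability of unipotency under subquotients, and for the forward direction you use the non-vanishing of $\ker(p)$-invariants, their $G$-stability by normality, and the factorisation of the action through $\im(p)$, exactly as in the paper. The only difference is that you spell out the weak-to-strong unipotency step and the parenthetical reformulation via \nref{Lemma}{lem:im_of_res_from_im} more explicitly, which the paper leaves to \nref{Rmk.}{rmk:tannaka-unip}.
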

\begin{proof}
Assume that $\Rep G$ is $\Res^{\im(p)}_G$-unipotent,
and let $K := \ker(p)$.
Then it is clear that whatever is in the image of $\Res^G_K$,
is (absolutely) unipotent.
By \nref{Prop.}{prop:tannaka-mor-cor}.\ref{enum:injective},
every $K$-representation is then a subquotient of a unipotent $K$-representation.
But such a representation must also be unipotent (\nref{Rmk.}{rmk:tannaka-unip}).

Now assume that $\ker(p)=:K$ is pro-unipotent.
Then, given a non-zero $G$-representation $V$,
the $K$-invariants $V^K$ are non-zero and form a sub-$G$-representation
(since $K$ is normal in $G$).
Moreover, $V^K$ is actually the restriction of an
$\im(p)$-representation,
since $\im(p) = \coker(K \to G)$.
This shows $\Res^{\im(p)}_G$-unipotency of $\Rep G$,
by \nref{Rmk.}{rmk:tannaka-unip}.
\end{proof}

One perspective on Tannaka duality is as follows.
Let $f^*$ be a fibre functor on a neutral Tannakian category $\clg{B}$ as above.
Then, using the 2-section $s^*$ of $f^*$ described in \nref{Rmk.}{rmk:fibre-section},
one can prove that $f^*f_*\bm1$ is naturally a Hopf algebra,
and we denote it by $\scr{H}(f^*) := f^*f_*\bm1$.
The Tannaka dual group is then $\scr{G}(f^*):=\Spec\scr{H}(f^*)$.
Note that $\scr{G}(f^*)$-representations are the same as $\scr{H}(f^*)$-comodules.

\begin{rmk}\label{rmk:regrep}
It's worth noting at this point,
that if $f^*$ denotes the canonical fibre functor on $\Rep G$,
then $f^*f_*\bm1$ is $\clg{O}(G)$,
and $f_*\bm1$ is the regular representation of $G$.
Moreover, $f_*\bm1$ is naturally an algebra object in $\Ind\Rep G$.
\end{rmk}

\subsection{Restriction, induction, and invariants}\label{sec:res-ind-inv}
Let $p\colon H\to G$ be a morphism of pro-algebraic groups.
We denote the restriction functor $\Rep G\to \Rep H$ by $p^*$,
or sometimes $\Res^G_H$ (usually when the morphism is unlabelled).
In the literature,
induction is often only defined for $p$ injective.
We depart from this convention and
call the ind-right adjoint $p_*\colon\Ind\Rep H\to\Ind\Rep G$ of $p^*$ \emph{induction}
for arbitrary $p$.
For this reason,
we sometimes denote $p_*$ by $\Ind_H^G$
(again, usually when the morphism is unlabelled).
Note that we can decompose any $p$ as a surjection
$p_0\colon H\to \im(p)$
followed by an injection $p_1\colon\im(p)\to G$.
Our induction functor $p_*$ is then given
by the composition of
the functor $\Gamma(\ker(p_0),\blank) = (\blank)^{\ker(p_0)}$
taking invariants under the action of the kernel of $p_0$,
and the more traditional induction functor $\Ind_{\im(p)}^G$ along the injective homomorphism $p_1$.

\begin{rmk}
The invariants functor $\Gamma(G,\blank) = (\blank)^G$ on $\Rep G$ is given by ``induction'' along $G\to 1$,
and the trivial representation functor $\VecOp\to\Rep G$ is given by restriction along $G\to 1$.
\end{rmk}

An important fact is that the projection morphism for the restriction-induction adjunction is an isomorphism.

\begin{prop}[The restriction-induction projection formula]\label{prop:proj}
Let $p$ be a homomorphism of pro-algebraic groups.
Then there is an isomorphism $p_*(A\otimes p^*B) \simeq p_*A \otimes B$,
natural in $A$ and $B$.
In particular, setting $A=\bm1$ gives a natural isomorphism
$p_*p^*\simeq p_*\bm1\otimes (\blank)$.
\end{prop}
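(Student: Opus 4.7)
The plan is to establish the projection formula by a Yoneda argument that exploits three facts: the $p^*$--$p_*$ adjunction, the strong monoidality of $p^*$, and the observation that $B$, being an object of $\Rep G$, is finite-dimensional and hence dualisable (so that $p^*B$ is dualisable in $\Ind\Rep H$, with dual canonically identified with $p^*(B^\vee)$ via the fact, recorded in the preliminaries, that monoidal functors preserve duals).

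For an arbitrary test object $X \in \Ind\Rep G$, I compute $\Hom(X, p_*(A \otimes p^*B))$ in a chain of natural bijections. First, the $p^*$--$p_*$ adjunction gives $\Hom(p^*X, A \otimes p^*B)$. Next, dualisability of $p^*B$ rewrites this as $\Hom(p^*X \otimes p^*(B^\vee), A)$. Strong monoidality of $p^*$ then repackages the source as $p^*(X \otimes B^\vee)$, yielding $\Hom(p^*(X \otimes B^\vee), A)$. Applying the adjunction in the opposite direction produces $\Hom(X \otimes B^\vee, p_*A)$, and a final use of tensor-Hom duality in $\Ind\Rep G$ delivers $\Hom(X, p_*A \otimes B)$. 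Each step is natural in $X$, $A$ and $B$, so Yoneda produces a natural isomorphism $p_*(A \otimes p^*B) \simeq p_*A \otimes B$.

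The one point that genuinely requires checking---and where I expect to have to slow down---is that the isomorphism produced by this Yoneda procedure coincides with the \emph{natural} projection morphism, namely the mate of the arrow $p^*(p_*A \otimes B) \simeq p^*p_*A \otimes p^*B \to A \otimes p^*B$ built from the counit of the $p^*$--$p_*$ adjunction. Verifying this amounts to a diagram chase using the triangle identities of the adjunction together with the coherence of the tensor-Hom adjunction; no real obstruction appears, but care is needed. The special case $A = \bm 1$ in the statement is then immediate.
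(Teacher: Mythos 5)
Your argument is correct, but it is a genuinely different route from the one in the paper. The paper disposes of the statement by factoring $p$ as a surjection followed by an injection: for a surjection the formula is immediate (invariants under the kernel commute with tensoring by a representation on which the kernel acts trivially), and for an injection it is the classical tensor identity, cited from Jantzen, I.3.6. Your proof instead is a self-contained rigidity argument: since $B$ is a finite-dimensional representation it is dualisable, $p^*$ preserves duals, and the chain of adjunctions
$\Hom(X, p_*(A\otimes p^*B)) \simeq \Hom(p^*X, A\otimes p^*B) \simeq \Hom(p^*(X\otimes B^\vee), A) \simeq \Hom(X\otimes B^\vee, p_*A) \simeq \Hom(X, p_*A\otimes B)$
is natural in $X$, $A$, $B$, so Yoneda concludes. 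This buys uniformity and generality in one direction: it needs no case distinction, no external citation, and works verbatim for any strong monoidal left adjoint between tensor categories, which is in the spirit of the paper's Tannakian formalism. The trade-off is that, as written, it only covers $B$ dualisable, i.e.\ $B$ in $\Rep G$ rather than $\Ind\Rep G$; if the ind-level statement is wanted (the paper's later uses only need $B$ in the small category, so this is not essential), you should add that both sides commute with filtered colimits in $B$ --- for $p_*$ this holds because invariants and induction are computed by finite limits of functors commuting with filtered colimits --- and then extend from the dualisable case. Finally, your worry about matching the Yoneda isomorphism with the canonical projection morphism is legitimate good practice, but note that the proposition as stated only asserts the existence of a natural isomorphism, so that verification (a routine triangle-identity chase, as you say) is optional here; the paper itself does not single out a preferred map at this point.
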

\begin{proof}
For $p$ injective, this is \cite[{}I.3.6]{jantzen}.
For $p$ surjective, it is obvious.
Combining these two cases, we can conclude for arbitrary $p$.
\end{proof}

We give two trivial consequences of this result, since they will be useful in the sequel.
They both concern morphisms $p$ that admit a retraction $r$,
and that are therefore, in particular, injective.
The first result gives a description of $p^*$ in terms of $p_*$ and $r_*$.
The second says that,
when tensoring with $p_*\bm1$,
one may first change the $\ker(r)$-action to be trivial,
and then tensor with $p_*\bm1$,
and get the same result.

\begin{lem}\label{lem:proj-conseq}
Let $p\colon H\to G$ be a homomorphism of pro-algebraic groups,
and let $r\colon G\to H$ be a retraction of $p$.
Then we have a natural isomorphism $p^* \simeq r_*(p_*\bm1 \otimes \blank)$.
\end{lem}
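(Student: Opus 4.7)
The plan is to reduce the claimed isomorphism to two facts: the projection formula just proved, and the functoriality of induction under composition.

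First, I would apply \nref{Prop.}{prop:proj} with $A = \bm1$ and $B = V$ to rewrite
\[
p_*\bm1 \otimes V \;\simeq\; p_*(\bm1 \otimes p^*V) \;\simeq\; p_*p^*V,
\]
naturally in $V$. Thus the right-hand side of the desired isomorphism becomes $r_*p_*p^*V$, and it suffices to exhibit a natural isomorphism $r_*p_* \simeq \id$ on $\Ind\Rep H$.

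Next, since $r \circ p = \id_H$, the corresponding restriction functors satisfy $p^* \circ r^* = \id$ on $\Rep H$. Taking right adjoints (which reverses the order of composition) yields $r_* \circ p_* \simeq \id$ on $\Ind\Rep H$ by the uniqueness of right adjoints, since the identity functor is its own right adjoint. Composing the two natural isomorphisms gives
\[
r_*(p_*\bm1 \otimes V) \;\simeq\; r_*p_*p^*V \;\simeq\; p^*V,
\]
as required.

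There is essentially no obstacle here; the only point worth double-checking is the compatibility $(r\circ p)_* \simeq r_* \circ p_*$, which is a purely formal consequence of the adjunctions $p^* \dashv p_*$ and $r^* \dashv r_*$ together with $(r\circ p)^* = p^* \circ r^*$. Everything else is a direct application of \nref{Prop.}{prop:proj}.
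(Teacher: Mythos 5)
Your proof is correct and follows the same route as the paper: apply the projection formula (\nref{Prop.}{prop:proj}) to identify $p_*\bm1\otimes(\blank)$ with $p_*p^*$, then cancel via $r_*p_*\simeq\id$. The paper simply asserts $r_*p_*=\id$, whereas you justify it by passing to right adjoints from $p^*r^*=\id$; this is a welcome but inessential elaboration.
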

\begin{proof}
By \nref{Prop.}{prop:proj},
we have an isomorphism
$p_*p^* \simeq p_*\bm1 \otimes (\blank)$.
To this isomorphism, we apply $r_*$,
then use that $r_*p_*=\id$, and get
$p^*\simeq r_*(p_*\bm1 \otimes \blank)$.
\end{proof}

\begin{lem}\label{lem:tensor-with-reg-rep}
Let $p\colon H\to G$ be a homomorphism of pro-algebraic groups,
and let $r\colon G\to H$ be a retraction of $p$.
Then we have a natural isomorphism $p_*\bm1 \otimes (\blank) \simeq p_*\bm1\otimes r^*p^*(\blank)$.
\end{lem}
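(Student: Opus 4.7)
The plan is to deduce this formally from the projection formula (Prop.~\ref{prop:proj}) by applying it twice, while exploiting the identity $p^*r^* = (rp)^* = \id_{\Rep H}$, which comes from $rp = \id_H$ together with the contravariance of $(\blank)^*$. The structure is parallel to the proof of Lem.~\ref{lem:proj-conseq}, except that there one follows a single application of the projection formula with $r_*$ and uses $r_*p_* = (rp)_* = \id$, whereas here the reduction is achieved by a second application of the projection formula.

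Concretely, the first application of Prop.~\ref{prop:proj} (with $A = \bm1$ and $B = V \in \Ind\Rep G$) gives the natural isomorphism
\begin{align*}
p_*\bm1 \otimes V \simeq p_*(p^*V).
\end{align*}
Setting instead $W := r^*p^*V \in \Ind\Rep G$, we compute $p^*W = p^*r^*p^*V = (rp)^*p^*V = p^*V$, so a second application of Prop.~\ref{prop:proj} (now with $A = \bm1$ and $B = W$) yields
\begin{align*}
p_*\bm1 \otimes r^*p^*V = p_*\bm1 \otimes W \simeq p_*(p^*W) = p_*(p^*V).
\end{align*}
Splicing these two isomorphisms delivers the desired natural isomorphism $p_*\bm1 \otimes V \simeq p_*\bm1 \otimes r^*p^*V$.

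There is no real obstacle to this argument; the lemma is a direct formal consequence of the projection formula. Naturality in $V$ is automatic, since every step is natural: the projection formula is natural in both of its arguments, and the assignment $V \mapsto r^*p^*V$ is functorial in $V$.
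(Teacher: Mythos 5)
Your proposal is correct and is essentially the paper's own argument: the paper's proof is the same chain $p_*\bm1\otimes(\blank) \simeq p_*p^*(\blank) \simeq p_*(p^*r^*)p^*(\blank) \simeq p_*\bm1\otimes r^*p^*(\blank)$, i.e.\ two applications of the projection formula (Prop.~\ref{prop:proj}) combined with $p^*r^* = \id$. Your write-up just splices the two applications explicitly rather than inserting the identity mid-chain.
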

\begin{proof}
Using the projection formula,~\ref{prop:proj},
we compute
\begin{align*}
p_*\bm1\otimes (\blank)
  \simeq p_*p^*(\blank)
  \simeq p_*(p^*r^*)p^*(\blank)
  \simeq p_*\bm1\otimes r^*p^*(\blank).
\end{align*}
\end{proof}

\subsection{Group isomorphisms}\label{sec:group-iso}
Consider a homomorphism of pro-algebraic groups
$p\colon G\to H$.
We have a corresponding morphism of Tannakian categories
$p^*\colon\Rep H\to \Rep G$ (restriction along $p$).
Let $f^*$ and $\varphi^*$ be the canonical fibre functors of $\Rep H$ and $\Rep G$, respectively.
Then $p^*$ induces a morphism of algebra objects
$p_\rom{alg}\colon p^*f_*\bm1 \to \varphi_*\bm1$,
from the restriction along $p$ of the regular representation of $H$ to the regular representation of $G$
(cf.\ \nref{Rmk.}{rmk:regrep}).
This map is simply the algebra homomorphism $\clg{O}(H)\to\clg{O}(G)$ induced by $p$,
which respects the $\clg{O}(G)$-coactions,
making it a morphism of algebra objects in $\Ind\Rep G$.

Another way of viewing this morphism,
is via the universal property of $\varphi_*\bm1$.
We have a natural isomorphism $\Hom(\blank,\varphi_*\bm1)\simeq \Hom(\varphi^*(\blank),\bm1)$.
So, to give a map $p^*f_*\bm1\to \varphi_*\bm1$,
is the same as to give a map from $\varphi^*p^*f_*\bm1$ to $\bm1$.
But $\varphi^*p^*f_*\bm1 \simeq f^*f_*\bm1$ has a natural map to $\bm1$:
the counit it's equipped with as a Hopf algebra.
In this way, we recover the algebra homomorphism
$p_\rom{alg}\colon p^*f_*\bm1 \to \varphi_*\bm1$
above.
We can see this by applying $\varphi^*$ to $p_\rom{alg}$,
and noting that the result is a morphism of Hopf algebras,
and is hence compatible with the two counits.
We get the following proposition:
\begin{prop}\label{prop:group-iso}
A homomorphism $p\colon G\to H$ of pro-algebraic groups is an isomorphism
if and only if the algebra morphism $p^*\clg{O}(H) \to \clg{O}(G)$ in $\Ind\Rep G$,
induced by the counit of $\clg{O}(H)$ under the universal property of $\clg{O}(G)$,
is an isomorphism.
\end{prop}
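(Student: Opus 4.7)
The plan is to reduce the statement to the classical fact that a homomorphism of affine group schemes is an isomorphism if and only if the induced map of Hopf algebras is an isomorphism, by applying the fibre functor $\varphi^*$ to both sides of the equivalence. The entire argument is essentially a formal consequence of the discussion immediately preceding the proposition; there is no real obstacle.

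First, I would record that $\varphi^*\colon\Rep G\to\VecOp$, being faithful and exact by the definition of a neutral fibre functor, reflects isomorphisms: in an abelian category a faithful exact functor is conservative, since if $\varphi^*(f)$ is an iso then $\varphi^*(\ker f)=\ker\varphi^*(f)=0$ and $\varphi^*(\coker f)=0$, whence $\id_{\ker f}$ and $\id_{\coker f}$ are sent to zero and are therefore zero by faithfulness. The same property passes to the ind-extension $\varphi^*\colon\Ind\Rep G\to\Ind\VecOp$.

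Next, I would apply $\varphi^*$ to the map $p_\rom{alg}\colon p^*f_*\bm1\to\varphi_*\bm1$. Using the natural isomorphism $\varphi^*\circ p^*\simeq f^*$ (which expresses the neutralisation) one has $\varphi^*p^*f_*\bm1\simeq f^*f_*\bm1=\clg{O}(H)$ and $\varphi^*\varphi_*\bm1=\clg{O}(G)$; moreover, as explained in the paragraph preceding the proposition, under these identifications the morphism $\varphi^*(p_\rom{alg})$ is precisely the classical Hopf algebra homomorphism $\clg{O}(H)\to\clg{O}(G)$ dual to $p$. (This uses the adjunction $(\varphi^*,\varphi_*)$ and the fact that the map defining $p_\rom{alg}$ comes from the counit $f^*f_*\bm1\to\bm1$, which is the counit of the Hopf algebra $\clg{O}(H)$.)

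Finally, assembling these pieces: $p_\rom{alg}$ is an isomorphism in $\Ind\Rep G$ if and only if $\varphi^*(p_\rom{alg})$ is an isomorphism in $\Ind\VecOp$ (by conservativity of $\varphi^*$), which holds if and only if the Hopf algebra map $\clg{O}(H)\to\clg{O}(G)$ is an isomorphism, which in turn holds if and only if $p\colon G\to H$ is an isomorphism of affine group schemes (by taking $\Spec$). This chain of equivalences yields the proposition.
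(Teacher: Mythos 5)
Your proposal is correct and follows essentially the same route as the paper: the paper's own justification consists precisely of the preceding discussion identifying $\varphi^*(p_\rom{alg})$ with the classical Hopf algebra homomorphism $\clg{O}(H)\to\clg{O}(G)$, after which the proposition follows from conservativity of the (ind-extended) fibre functor and the anti-equivalence between affine group schemes and Hopf algebras.
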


\subsection{Categories of modules}\label{sec:modules}
Consider an abelian tensor category $\clg{A}$ with all filtered colimits,
in which the tensor product commutes with finite colimits,
and let $\clg{A}^\rom{cpt}$ denote the full subcategory of compact objects:
objects $C\in \clg{A}$ such that $\Hom(C, \blank)$ commutes with filtered colimits.
Take an algebra object $A$ in $\clg{A}$,
and consider the category $\Mod(A)$ of (left) $A$-modules in $\clg{A}$,
i.e., objects $M\in\clg{A}$ together with an $A$-action $A\otimes M\to M$ satisfying the usual axioms.
The category $\Mod(A)$ is an abelian tensor category (under $\otimes_A$).
\begin{defn}
Let $M$ be an $A$-module, as above. Then,
\begin{itemize}
\item $M$ is \emph{strictly free of finite rank} if $M\simeq A^{\oplus n}$ for some $n$;
\item $M$ is \emph{free} if $M\simeq A\otimes X$ for some $X$ in $\clg{A}$;
\item $M$ is \emph{finitely generated free} if $M\simeq A\otimes X$ for some $X$ in $\clg{A}^\rom{cpt}$;
\item $M$ is \emph{finitely generated} if $M$ is a quotient $A\otimes X\twoheadrightarrow M$
  of some finitely generated free module.
\end{itemize}
\end{defn}

\begin{rmk}
If $\clg{B}$ is an abelian category,
then $(\Ind\clg{B})^\rom{cpt}$ is exactly the essential image of $\clg{B}$ in $\Ind\clg{B}$.
This is the situation in which we will use these notions,
i.e., $\clg{B}$ will be an abelian tensor category, $\clg{A}$ will be $\Ind\clg{B}$,
and $A$ will be an algebra object in $\Ind\clg{B}$.
\end{rmk}

\subsection{The Standard Situation}\label{sec:standard}
We now describe the Tannaka dual of the group theoretic setup from
Theorems~\ref{thm:M_intro} and~\ref{thm:U_intro} from the introduction.
Consider the following diagram in $\nTan$:%
\footnote{%
  If the choice of symbols looks strange,
  it is because it has been made with certain applications in mind.
  The letters $\clg{M}$ and $\clg{T}$ were chosen with the words \emph{motivic} and \emph{topological} in mind,
  while $B$ and $b$ were taken from \emph{Betti}.
  The letters $f$ and $\varphi$ are meant to evoke \emph{fibre},
  while $s$ and $\sigma$ simply stand for \emph{section}.%
}
\begin{equation}\begin{tikzcd}\label{diag:standard}
\clg{M}\ar[d,"f^*"'] \ar[r,"B^*"] &[1em] \clg{T} \ar[d,"\varphi^*"'] \\[1em]
\clg{E}\ar[u,"s^*"',bend right] \ar[r,"b^*"'] & \VecOp \ar[u,"\sigma^*"',bend right]
\end{tikzcd}\end{equation}
We sometimes refer to
% the left hand side as the \emph{relative side},
% the right hand side as the \emph{absolute side},
% and
%% ^ Not used anywhere in the document.
$B^*$ and $b^*$ as \emph{realization functors}.
We assume we're given monoidal isomorphisms
$b^*f^* \simeq \varphi^*B^*$,
$\varphi^*\sigma^* \simeq \id$,
and $f^*s^* \simeq \id$.
These combine to give various other isomorphisms, such as $b^*\simeq \varphi^*B^*s^*$.
Moreover, $\sigma^*$ and $s^*$ are automatically full,
since they are 2-sections of faithful functors.
We denote the Tannaka duals as follows:
\begin{align*}
H:=\scr{G}(\clg{E}),\quad G:=\scr{G}(\clg{M}),\quad P:=\scr{G}(\clg{T}).
\end{align*}

Let us extract from diagram \heqref{diag:standard} the sequence
\begin{align*}
\clg{E} \xrightarrow{s^*} \clg{M} \xrightarrow{B^*} \clg{T},
\end{align*}
of morphisms of neutral Tannakian categories.
We have a dual sequence of groups
\begin{equation*}\begin{tikzcd}
P\ar[r,"B"] & G\ar[r,"s"] & H.
\end{tikzcd}\end{equation*}
Note that $s$ is surjective, since the 2-retraction $f^*$ of $s^*$ in $\nTan$
produces a section $f$ of $s$.%
\footnote{%
  It is somewhat unfortunate that $f$ denotes the section of $s$, instead of the other way around.
  But since the bulk of our arguments takes place on the Tannakian side of things,
  it seemed reasonable to give priority in the notation to the fact that $s^*$ is a section of $f^*$.%
}
Writing $\iota\colon K\to G$ for the kernel of $s$,
we get a split exact sequence of pro-algebraic groups
\begin{equation*}\begin{tikzcd}
1\ar[r] &[-1em] K\ar[r,"\iota"] & G\ar[r,"s"'] & H\ar[l,"f"',bend right] \ar[r] &[-1em] 1,
\end{tikzcd}\end{equation*}
and an isomorphism $G\simeq K \rtimes H$.

The splitting just described induces an action by $H$ on $K$,
telling us that $\clg{O}(K)$ should naturally come from $\Ind\clg{E}$.
Indeed, the fact that $f^*$ has a nice section $s^*$,
gives rise to a natural Hopf algebra structure on $f^*f_*\bm1$ in $\Ind\clg{E}$
(see~\cite[Thm.~1.45]{motivic_hopf_I}),
and this Hopf algebra then describes the kernel group $K$.
More precisely, $\Spec(b^*(f^*f_*\bm1))\simeq K$
(see~\cite[Thm.~2.7]{motivic_hopf_II}).
In analogy with the notation introduced in \nref{Section}{sec:tannaka},
we write $f^*f_*\bm1 =: \scr{H}(\clg{M}/\clg{E},f^*,s^*)$,
omitting some of the arguments from the notation depending on what we want to emphasize.
Likewise, we write $\scr{G}(\clg{M}/\clg{E},f^*,s^*)$ for its pro-dual in $(\Ind\clg{E})^\rom{op} \simeq \Pro(\clg{E}^\rom{op})$.
Then one can make sense of the formula $b^*\scr{G}(\clg{M}/\clg{E})\simeq K$.

Note that the composition $B^*\circ s^*$ factors through $\sigma^*$,
up to a natural isomorphism.
Thus, the composition $s\circ B$ is trivial,
and $B$ factors as $B=\iota\circ\varrho$, through the kernel $K$ of $s$.
The following diagrams summarize the situation:
\begin{equation*}\begin{tikzcd}
1 \ar[r]
  &[-1em]
    K \ar[r, "\iota"]
  & G \ar[r, "s"']
  & H \ar[l, bend right, "f"'] \ar[r]
  &[-1em] 1 \\
&& P\ar[ul, dotted, bend left=24, "\exists\varrho"'] \ar[u, "B" ']
\end{tikzcd}
\qquad\quad
\begin{tikzcd}
\clg{M} \ar[d, "f^*" '] \ar[r, "\iota^*"]
  & \Rep K \ar[d, "g^*"'] \ar[r, "\varrho^*"]
  & \clg{T} \ar[d, "\varphi^*" '] \\
\clg{E} \ar[u, bend right, "s^*"'] \ar[r, "b^*"'] & \VecOp \ar[u, bend right, "t^*"'] \ar[r, equals] & \VecOp \ar[u, bend right, "\sigma^*"']
\end{tikzcd}
\end{equation*}

Finally,
there is a canonical map
\begin{align}\label{eq:canonical_map}
b^* Rs_* \to R\sigma_* B^*,
\end{align}
which will be important to us.
It corresponds under adjunction to the map
$\sigma^*b^*Rs_* \to B^*$
which,
via the isomorphism $\sigma^*b^* \simeq B^*s^*$,
is induced by the counit of the $(s^*, Rs_*)$-adjunction.
In our two main theorems,
the key assumption is that the degree 0 and 1 parts of this canonical map~\heqref{eq:canonical_map}
are isomorphisms.

\section{The unipotent isomorphism criterion}\label{sec:thmU}
The goal of this section is to prove Thm.~\ref{thm:U_intro}.
The methods are Tannakian,
and the first step is to reformulate the result under Tannakian duality,
via the Standard Situation (\nref{Section}{sec:standard}):
\begin{equation*}\begin{tikzcd}
\clg{M} \ar[r, "B^*"] \ar[d, "f^*" '] &[1em] \clg{T} \ar[d, "\varphi^*" '] \\[1em]
\clg{E} \ar[r, "b^*"'] \ar[u, bend right, "s^*"'] & \VecOp \ar[u, bend right, "\sigma^*"']
\end{tikzcd}\end{equation*}
We may now state the first key result.
\begin{thm}\label{thm:U1}
Suppose $\clg{T}$ is unipotent.
If the canonical maps $b^*R^is_* \to R^i\sigma_*B^*$, for $i=0,1$, are isomorphisms,
then the natural algebra homomorphism
$B^*f_*\bm1 \to \varphi_*\bm1$
(see \nref{Section}{sec:group-iso})
is an isomorphism.
\end{thm}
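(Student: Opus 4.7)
By \nref{Prop.}{prop:group-iso}, applied to $\varrho\colon P\to K$, and unwinding the Standard Situation (in particular, $B^*f_*\bm{1}\simeq\varrho^*\clg{O}(K)$ and $\varphi_*\bm{1}\simeq\clg{O}(P)$ as $P$-representations), the claim is equivalent to showing that $\varrho$ is an isomorphism of pro-algebraic groups. The plan is Tannakian in spirit: construct both $f_*\bm{1}\in\Ind\clg{M}$ and $\varphi_*\bm{1}\in\Ind\clg{T}$ explicitly as sequential colimits of universal iterated extensions of $\bm{1}$, and verify that $B^*$ intertwines the two constructions stage by stage.

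As a preliminary, I would deduce that $K$ is pro-unipotent. For any non-zero $V\in\clg{M}$, the pro-unipotency of $P$ gives $V^P\neq 0$, and the $i=0$ hypothesis then forces $V^K\simeq V^P\neq 0$. Hence $\clg{M}$ is weakly $s^*$-unipotent, so $s^*$-unipotent by finite length, whence $K$ is pro-unipotent by \nref{Lemma}{lem:unip_ker}. This guarantees that the ``relative'' construction on the $G$-side actually exhausts $f_*\bm{1}$.

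Now the construction itself. On the $P$-side, define $W_0:=\bm{1}$ and, inductively, let $W_{n+1}$ be the universal extension of $W_n$ by a trivial object, i.e.\ the extension
\[
0\to W_n\to W_{n+1}\to\sigma^*R^1\sigma_*W_n\to 0
\]
classified by the identity in $\End(R^1\sigma_*W_n)$. By pro-unipotency of $P$, the ind-object $\varphi_*\bm{1}$ is an injective hull of $\bm{1}$ in $\Ind\clg{T}$, giving $\colim_nW_n\simeq\varphi_*\bm{1}$. On the $G$-side, mimic this relative to $\clg{E}$: set $U_0:=\bm{1}$ and take $U_{n+1}$ to be the universal extension of $U_n$ by $s^*R^1s_*U_n$, classified by the identity in $\End(R^1s_*U_n)$. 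Using pro-unipotency of $K$, identify $\colim_nU_n\simeq f_*\bm{1}$. One then shows by induction that $B^*U_n\simeq W_n$ compatibly with the transition maps: the base case is immediate, and the inductive step uses the compatibility $B^*s^*\simeq\sigma^*b^*$ together with the $i=1$ hypothesis $b^*R^1s_*\simeq R^1\sigma_*B^*$ to identify both the coefficient object $B^*s^*R^1s_*U_n\simeq\sigma^*R^1\sigma_*W_n$ and the universal extension class in $\End$. Taking the colimit yields the desired isomorphism, and tracing universal properties on both sides shows that it agrees with the canonical algebra map.

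The hardest step is identifying the colimits with the regular representations. The assertion $\colim_nW_n\simeq\varphi_*\bm{1}$ amounts to showing that the colimit admits no nontrivial extension by $\bm{1}$, making it an injective hull; in the absolute pro-unipotent case this is comparatively standard. The relative analogue $\colim_nU_n\simeq f_*\bm{1}$ is considerably more delicate, since the universal property has to be formulated to respect the $H$-action and the tensor structure on $\Ind\clg{M}$---this is presumably the main content of \nref{Section}{subsec:W}. By contrast, once the universal constructions are in place, the cohomological dévissage in the final step is a clean induction whose only input is the hypothesis in degrees $0$ and $1$, used precisely as stated.
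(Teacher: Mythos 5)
Your overall strategy is the paper's: build $\varphi_*\bm1$ and $f_*\bm1$ as universal iterated extensions of $\bm1$ and check that $B^*$ intertwines the two towers stage by stage (\nref{Section}{subsec:U}, \nref{Section}{subsec:W}, \nref{Prop.}{prop:realisation}). Your preliminary that $K$ is pro-unipotent is \nref{Rmk.}{rmk:automatic-unipotency}, and your identification of the absolute colimit with $\varphi_*\bm1$ via ``injective hull of $\bm1$'' is a legitimate variant of \nref{Prop.}{prop:characterisationU} (splitting property plus unipotent induction). The gap is on the relative side, and your sketch both misdescribes the construction and mislocates the difficulty.

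First, ``the universal extension of $U_n$ by $s^*R^1s_*U_n$ classified by the identity in $\End(R^1s_*U_n)$'' is not a well-defined prescription, because $\clg{E}$ is not semisimple. By adjunction $\Ext^1_{\clg{M}}(s^*V',U_n)\simeq\Hom_{\clg{D}(\clg{E})}(V',Rs_*U_n[1])$, and with $V'=R^1s_*U_n$ the truncation triangle (using $s_*U_n\simeq\bm1$) gives an exact sequence
\begin{align*}
\Ext^1_{\clg{E}}(V',\bm1)\to\Ext^1_{\clg{M}}(s^*V',U_n)\to\End(R^1s_*U_n)\xrightarrow{\ \partial\ }\Ext^2_{\clg{E}}(V',\bm1),
\end{align*}
so the identity class need not lift to an extension at all, and when it does the lift is not unique. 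Producing such a lift is precisely the content of the paper's construction of $W$: the maps $\alpha_n$, the auxiliary hypotheses (IH-1), (IH-2), and the splitting of the triangle $\bm1\to Rs_*W_n\to C$ exist exactly to kill this obstruction. Second, ``using pro-unipotency of $K$, identify $\colim_nU_n\simeq f_*\bm1$'' cannot work by any injective-hull or splitting-property argument: $f_*\bm1$ is not injective in $\Ind\clg{M}$, and indeed $\Ext^1_{\Ind\clg{M}}(s^*V,f_*\bm1)\simeq\Ext^1_{\Ind\clg{E}}(V,\bm1)$ (adjunction plus $R^{>0}s_*f_*\bm1=0$, since $f_*\bm1$ is the regular representation of $K$), which is generally non-zero; so $f_*\bm1$ admits non-split extensions by objects $s^*V$ and is not characterised by the vanishing of such extensions. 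In the paper the identification $W\simeq f_*\bm1$ is obtained only \emph{after} the comparison $B^*W\simeq U\simeq\varphi_*\bm1$, via the projection-formula lemmas and the conservativity of $b^*$ (\nref{Lemma}{lem:keylemmaforW}, \nref{Prop.}{prop:characterisationW}); that is, the cohomological hypotheses are used a second time here. So the delicacy you defer to \nref{Section}{subsec:W} is not about equivariance or the tensor structure, but about the non-semisimplicity of $\clg{E}$, and your proposed mechanisms for the relative tower and its identification would fail as stated.
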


\begin{rmk}\label{rmk:automatic-unipotency}
The assumptions of \nref{Thm.}{thm:U1} imply that $\clg{M}$ is $s^*$-unipotent,
as follows.
By \nref{Rmk.}{rmk:tannaka-unip},
it's enough to show weak unipotency.
But given a non-zero object $X$ of $\clg{M}$,
the subobject $s^*s_*X$ must be non-zero.
(The fact that it's a subobject follows from the surjectivity of
$\scr{G}(\clg{M}) \to \scr{G}(\clg{E})$.)
To check this, we apply $B^*$,
yielding $\sigma^*\sigma_*B^*X$,
Now the first assumption give that this is non-zero,
since $B^*X$ is non-zero and $\sigma^*\sigma_*B^*X$ is the largest subobject of $B^*X$
in the essential image of $\sigma^*$.
\end{rmk}

Recall, that in the Standard Situation (\nref{Section}{sec:standard}),
there is an induced group homomorphism
\begin{align*}
\varrho\colon P=\scr{G}(\varphi^*) \to b^*\scr{G}(f^*) = K.
\end{align*}
Using \nref{Thm.}{thm:U1} and \nref{Prop.}{prop:group-iso},
we get \nref{Thm.}{thm:U_intro} from the introduction,
which in the present context reads as follows.
\begin{thm}[Unipotent isomorphism criterion]\label{thm:U2}
Suppose $P$ is pro-unipotent.
If the canonical maps $b^*R^is_* \to R^i\sigma_*B^*$, for $i=0,1$, are isomorphisms,
then $\varrho$ is an isomorphism.
\qed
\end{thm}

We prove \nref{Thm.}{thm:U1} by constructing $U\simeq \varphi_*\bm1$ in $\Ind\clg{T}$
(\nref{Section}{subsec:U} and \nref{Prop.}{prop:characterizationU})
and $W\simeq f_*\bm1$ in $\Ind\clg{M}$
(\nref{Section}{subsec:W} and \nref{Prop.}{prop:characterizationW})
as well as an isomorphism $B^*W\simeq U$
(\nref{Prop.}{prop:realization}).
After all of that,
we only have to check some compatibilities,
which we do in \nref{Section}{sec:unipotent-proof}.

\subsection{Construction of $U$}\label{subsec:U}
The relevant context in this section is that of a Tannakian category
$\clg{T}$ with fibre functor $\varphi^*$,
which has a section $\sigma^*$ as in \nref{Rmk.}{rmk:fibre-section}.
We need no further assumptions for the construction.

We begin with some motivation.
When $\clg{T}$ is assumed to be unipotent,
every object is (by definition) a successive extension of trivial objects, starting from $\bm1$.
The ind-object $\varphi_*\bm1$ is also (ind-)unipotent in this situation.
It moreover (always) satisfies a universal property that makes it as ``large'',
in the sense that every object in $\clg{T}$ appears
as a subquotient of some $(\varphi_*\bm1)^{\oplus n}$.
The idea, when constructing $U$,
is therefore to start from $\bm1$
and iterate a kind of universal extension-of-a-trivial-object operation,
to get a large, universal, ind-unipotent object.

Let $U_0 := \bm{1}_{\clg{T}}$ and construct $U_{n+1}$ inductively:
\begin{align}\label{eq:undefseq}
0\to U_n\to U_{n+1}\to \sigma^*V\to 0,
\end{align}
i.e., define $U_{n+1}$ as an extension of a trivial object by $U_n$.
We have to say what $V$ is and what the extension class is.

Note that
\begin{align*}
\Ext^1(\sigma^*V, U_n) &= \Hom(\sigma^*V,U_n[1]) \\
&= \Hom(V,R\sigma_* U_n[1]) \\
&= \Hom(V,R^1\sigma_* U_n),
\end{align*}
where the last equality is due to $\VecOp$ being semisimple.
We therefore take $V$ to be $R^1\sigma_*U_n$
and the extension class to be given by the identity in $\End(R^1\sigma_*U_n)$.
Set $U := \colim_{n\geq 0} U_n$ in $\Ind\clg{T}$.

\begin{rmk}
These are \emph{universal extensions} in the sense of~\cite[Def.~3.1]{daddezio-esnault},
but we won't use this fact.
\end{rmk}

We now record two key properties of this construction.
Apply $\sigma_*\simeq\Hom(\bm 1, \blank)$ to the defining sequence~\heqref{eq:undefseq} of $U_{n+1}$ to get a long exact sequence
\begin{equation*}\begin{tikzcd}
0 \ar[r] & \Hom(\bm 1, U_n) \ar[r, "a"] & \Hom(\bm 1, U_{n+1}) \ar[r] & R^1\sigma_*U_n \\
\ar[r, "\delta"] & \Ext^1(\bm 1, U_n) \ar[r , "b"] & \Ext^1(\bm 1, U_{n+1}) \ar[r] & \dotsm.
\end{tikzcd}\end{equation*}
We have that $\delta$ is an isomorphism, by definition.
Therefore, $a$ is an isomorphism and $b$ is the zero map.
This establishes the following lemma:
\begin{lem}\label{lem:triv-pushout-U}
We have a canonical isomorphism $\bm1\simeq\sigma_*U_0$,
and the canonical maps $\sigma_*U_n\to\sigma_*U_{n+1}$, $n\geq 0$, are isomorphisms.
Moreover, the pushout extension along $U_n\to U_{n+1}$ of any extension of a trivial object by $U_n$
is split.
\qed
\end{lem}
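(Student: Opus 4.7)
The plan is to extract everything from the long exact sequence obtained by applying the derived functor $R\sigma_* \simeq R\Hom(\bm 1, \blank)$ to the defining extension
\[
0 \to U_n \to U_{n+1} \to \sigma^*V \to 0, \qquad V := R^1\sigma_* U_n,
\]
and to identify the connecting morphism. The first assertion $\bm 1 \simeq \sigma_* U_0$ is immediate, since $\sigma^*$ is monoidal and $U_0 = \bm 1$, so $\sigma_* U_0 = \sigma_* \bm 1 \simeq \bm 1$.

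Using $\sigma_*\sigma^* V = V$ (unit of adjunction), the long exact sequence begins
\[
0 \to \sigma_* U_n \to \sigma_* U_{n+1} \to V \xrightarrow{\delta} R^1\sigma_* U_n \to R^1\sigma_* U_{n+1} \to \dotsm.
\]
The crux of the proof is that $\delta$ is the identity. This uses the adjunction identification $\Ext^1(\sigma^*V,U_n) \simeq \Hom(V, R^1\sigma_* U_n)$: under it, the connecting morphism $\delta$ corresponds to the extension class defining $U_{n+1}$, which by construction is the identity in $\End(V) = \Hom(V, R^1\sigma_* U_n)$. Once $\delta = \id_V$ is known, exactness of the sequence immediately yields that $\sigma_* U_n \to \sigma_* U_{n+1}$ is surjective (hence an isomorphism), and that $R^1\sigma_* U_n \to R^1\sigma_* U_{n+1}$ is the zero map.

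For the final clause, let $0 \to U_n \to E \to \sigma^*W \to 0$ be any extension of $U_n$ by a trivial object. Its class lives in $\Ext^1(\sigma^*W, U_n) \simeq \Hom(W, R^1\sigma_* U_n)$, and the pushout along $U_n \to U_{n+1}$ is classified by the composition with $R^1\sigma_* U_n \to R^1\sigma_* U_{n+1}$ under the corresponding adjunction for $U_{n+1}$. Since we have just shown this map is zero, the pushout extension is trivial, i.e., split.

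There is no real obstacle here; the only delicate point is the identification of $\delta$ with $\id_V$, which is the standard compatibility between connecting homomorphisms in long exact sequences and extension classes, transported across the adjunction $\sigma^* \dashv R\sigma_*$.
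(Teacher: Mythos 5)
Your proof is correct and follows essentially the same route as the paper: apply $\sigma_*\simeq\Hom(\bm1,\blank)$ to the defining sequence, identify the connecting map with the extension class (the identity on $R^1\sigma_*U_n$), and read off that $\sigma_*U_n\to\sigma_*U_{n+1}$ is an isomorphism and $R^1\sigma_*U_n\to R^1\sigma_*U_{n+1}$ is zero, the latter giving the splitting of pushouts. You merely spell out the last clause slightly more explicitly than the paper does.
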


\subsection{Construction of $W$}\label{subsec:W}
For this construction,
we work with Tannakian categories $\clg{M}$ and $\clg{E}$
and a morphism $f^*\colon \clg{M} \to \clg{E}$ with a section $s^*$,
as introduced above.
We need no further assumptions for the construction.

The idea for this construction is simply to implement the construction of $U$
in this more general context.
The difficulty arises from the failure of $\clg{E}$ to be semisimple
(in contrast to $\VecOp$).
While not every distinguished triangle in the derived category $\clg{D}(\clg{E})$ of $\clg{E}$ splits,
the ones we need to consider, it turns out, do split.
The functor $f^*$ plays a key role in finding these splittings.

Let $W_0:=\bm 1_{\clg{M}}$.
We define $W_{n+1}$ inductively from $W_n$ as an extension
\begin{align}\label{eq:wndefseq}
0\to W_n\to W_{n+1}\to s^*R^1s_*W_n\to 0.
\end{align}
Such an extension is given by a map
\begin{align*}
s^*R^1s_*W_n \to W_n[1],
\end{align*}
in the derived category.
The counit of the $(s^*,s_*)$-adjunction gives us a map
$s^*Rs_*W_n[1] \to W_n[1]$,
and what's missing is a map
$s^*R^1s_*W_n \to s^*Rs_*W_n[1]$.
This map will be the image under $s^*$ of a map
$R^1s_*W_n \to Rs_*W_n[1]$
which we now spend some time constructing.

We shall make two auxiliary induction hypotheses.
\begin{enumerate}[({IH-}1)]
\item\label{enum:ih1}
We have a canonical isomorphism $\bm1\simeq s_*W_0$,
and the canonical maps
\begin{align*}
s_*W_0\to s_*W_1 \to \dotsm \to s_*W_n,
\end{align*}
are isomorphisms.
\item\label{enum:ih2}
We have a map $\alpha_n\colon f^*W_n\to\bm 1$,
such that the composition
\begin{align*}
\bm 1 \simeq s_*W_n \simeq f^*s^*s_* W_n \to f^*W_n \xrightarrow{\alpha_n} \bm 1,
\end{align*}
is the identity map.
\end{enumerate}

These hypotheses are satisfied for the base case $n=0$:
$s_*\bm1\simeq s_*s^*\bm1 \simeq \bm1$,
since $s^*$ is monoidal and fully faithful;
and we let $\alpha_0\colon f^*\bm1\simeq \bm1$
be the map coming from the monoidal structure of $f^*$.

We are now ready to construct the missing map
$R^1s_*W_n \to Rs_*W_n[1]$,
using the auxiliary assumptions on $W_n$.
They allow us to split off $s_*W_n$ from $Rs_*W_n$.

Note that
\hyperref[enum:ih1]{(IH-\ref*{enum:ih1})}
gives us a map $\bm 1\to Rs_*W_n$,
which is an isomorphism on the zeroth cohomology.
Now consider a cone $C$ of this map:
\begin{align}\label{eq:ctriangle}
\bm 1 \to Rs_*W_n \to C \to.
\end{align}
Moreover, consider the map $\beta_n\colon Rs_*W_n\to f^*W_n$ defined as the composition
\begin{align*}
Rs_*W_n\simeq f^*s^*Rs_*W_n\to f^*W_n.
\end{align*}
By
\hyperref[enum:ih2]{(IH-\ref*{enum:ih2})},
the triangle~\heqref{eq:ctriangle} is split by retracting the left map,
$\bm1\to Rs_*W_n$,
with
\begin{align*}
Rs_* W_n \xrightarrow{\beta_n} f^*W_n \xrightarrow{\alpha_n} \bm 1,
\end{align*}
and we hence get a section $s\colon C\to Rs_*W_n$.
By construction, the first non-zero cohomology of $C$ is $H^1C = R^1s_*W_n$
and we get a map $\iota\colon R^1s_*W_n\to C[1]$.
Composing with $s[1]\colon C[1]\to Rs_*W_n[1]$ gives the map
$R^1s_*W_n\to Rs_*W_n[1]$ we are looking for.
The following diagram may clarify the situation:
\begin{equation*}\begin{tikzcd}
\bm 1 \ar[rr] && Rs_*W_n \ar[dl, bend left, "\beta_n"] \ar[r] & C \ar[l, bend right, dotted, "s" '] \\
& f^*W_n \ar[ul, bend left, "\alpha_n"] && R^1s_*W_n[-1] \ar[u, "\iota{[-1]}" ']
\end{tikzcd}\end{equation*}

What is left is to establish the induction hypotheses
\hyperref[enum:ih1]{(IH-\ref*{enum:ih1})} and \hyperref[enum:ih2]{(IH-\ref*{enum:ih2})}
for $W_{n+1}$.
We start with lifting $\alpha_n$ to $W_{n+1}$.
Applying $f^*$ and then the contravariant $\Hom(\blank,\bm 1)$
to the defining sequence~\heqref{eq:wndefseq} of $W_{n+1}$
yields an exact sequence
\begin{equation*}\begin{tikzcd}[row sep=0]
\Hom(f^*W_{n+1},\bm 1) \ar[r] & \Hom(f^*W_n,\bm 1) \ar[r, "a"] & \Ext^1(R^1s_*W_n,\bm 1) \\
\alpha_{n+1} \ar[r, mapsto, dotted] & \alpha_n \ar[r, mapsto] & a(\alpha_n)
\end{tikzcd}\end{equation*}
and hence the existence of $\alpha_{n+1}$ is equivalent to $a(\alpha_n)$ being zero.
But $a(\alpha_n)=(\alpha_n[1])\circ\omega_n$,
where $\omega_n$ is the extension class of $f^*W_{n+1}$, i.e.,
\begin{align*}
\omega_n\colon R^1s_*W_n \xrightarrow{\iota} C[1]
  \xrightarrow{s[1]} Rs_*W_n[1]
  \xrightarrow{\beta_n[1]} f^*W_n[1].
\end{align*}
Composing this with $\alpha_n[1]$ gives zero,
because $\alpha_n\circ\beta_n\circ s = 0$,
as this was how we split the triangle~\heqref{eq:ctriangle} defining $C$.
Finally, consider the following diagram:
\begin{equation*}\begin{tikzcd}
\bm 1 \ar[d, equals] \ar[r, "\sim"] & s_*W_n \ar[d] \ar[r, "\sim"] & f^*s^*s_* W_n \ar[d] \ar[r] & f^*W_n \ar[d] \ar[r, "\alpha_n"] & \bm 1 \ar[d, equals] \\
\bm 1 \ar[r] & s_*W_{n+1} \ar[r] & f^*s^*s_* W_{n+1} \ar[r] & f^*W_{n+1} \ar[r, "\alpha_{n+1}"] & \bm 1
\end{tikzcd}\end{equation*}
It is commutative,
and hence the bottom row is the identity as required,
establishing
\hyperref[enum:ih2]{(IH-\ref*{enum:ih2})}
for $W_{n+1}$.

Lastly, as the full faithfulness of $s^*$ implies $s_*s^*\simeq\id$,
the long exact sequence after applying $s_*$ to~\heqref{eq:wndefseq} is:
\begin{equation*}\begin{tikzcd}
0 \ar[r] & s_* W_n \ar[r, "a"] & s_*W_{n+1} \ar[r] & R^1s_*W_n \\
\ar[r, "\delta"] & R^1s_* W_n \ar[r , "b"] & R^1s_*W_{n+1} \ar[r] & \dotsm.
\end{tikzcd}\end{equation*}
Because $\delta$ is the identity map by construction,
we get that $a\colon s_*W_n\simeq s_*W_{n+1}$ is an isomorphism,
establishing
\hyperref[enum:ih1]{(IH-\ref*{enum:ih1})}
for $W_{n+1}$.
This finishes the construction of $W := \colim_{n\geq 0} W_n$.

\subsection{Properties}
In this section, we establish some key properties of $U$ and $W$.
Most importantly,
their isomorphisms with $\varphi_*\bm1$ and $f_*\bm1$,
respectively,
are proved in \nref{Prop.}{prop:characterizationU} and \nref{Prop.}{prop:characterizationW}.

\subsubsection{$W$ realizes to $U$}
We begin with establishing that $B^*W$ is
isomorphic to $U$.
\begin{prop}\label{prop:realization}
If the canonical map $b^*R^1s_* \to R^1\sigma_*B^*$ is an isomorphism,
then $B^*W\simeq U$.
\end{prop}
\begin{proof}
We give an isomorphism of inductive systems $(B^*W_n)_n\simeq (U_n)_n$.
Clearly $B^*W_0\simeq U_0$, since $B^*$ is monoidal.
We now proceed by induction, and assume that $B^*W_n\simeq U_n$.
Applying $B^*$ to the defining sequence~\heqref{eq:wndefseq} for $W_{n+1}$
yields an isomorphism of extensions
\begin{equation*}\begin{tikzcd}
0 \ar[r] & B^*W_n \ar[d, "\sim" {rotate=90, anchor=north}] \ar[r]
         & B^*W_{n+1} \ar[d, "\sim" {rotate=90, anchor=north}] \ar[r]
         & B^*(s^*R^1s_*W_n) \ar[d, "\sim" {rotate=90, anchor=north}] \ar[r] & 0 \\
0 \ar[r] & U_n \ar[r] & E \ar[r] & \sigma^*R^1\sigma_*U_n \ar[r] & 0
\end{tikzcd}\end{equation*}
where we've used that realization commutes with pushforwards on the rightmost term.
We have to check that the extension class is correct.
Applying $B^*$ to the composition
\begin{align*}
s^*R^1s_*W_n \to s^*C[1] \to s^*Rs_*W_n[1] \to W_n[1],
\end{align*}
gives
\begin{align*}
\sigma^*R^1\sigma_* U_n \to \sigma^*b^*C[1] \to \sigma^*b^*Rs_*W_n[1] \to U_n[1].
\end{align*}
Under adjunction this corresponds to % NOTE: $\sigma^*$ is full
\begin{align*}
R^1\sigma_* U_n \to b^*C[1] \to b^*Rs_*W_n[1] \to R\sigma_*U_n[1].
\end{align*}
This map is in the semisimple category $\clg{D}(\VecOp)$,
and the source is concentrated in degree zero,
so it is equivalent to its zeroth cohomology
\begin{align*}
R^1\sigma_* U_n \to H^1(b^*C) \to b^*R^1s_*W_n \to R^1\sigma_*U_n,
\end{align*}
and, looking at the construction of $U$,
we have to see that this composition is the identity.
This is easy to see,
using the definition of $C$
together with the definition of the first two maps,
as well as the assumption of the proposition (to deal with the last map).
% NOTE: have more details in black notebook
\end{proof}

\subsubsection{Characterization of $U$}
The universal property of $\varphi_*\bm1$ is that the natural transformation
\begin{align*}
\Hom(\blank,\varphi_*\bm1) \to \Hom(\varphi^*(\blank),\bm1),
\end{align*}
is an isomorphism.
This transformation takes a map on the left hand side,
applies $\varphi^*$ to it,
and then composes with $\varphi^*\varphi_*\bm1\to\bm1$.
In order to show the isomorphism $U\simeq \varphi_*\bm1$,
we simply have to establish the same universal property for $U$.

A key property of $\varphi_*\bm1$ (which we won't use) is the following:
\begin{prop}\label{prop:split-reg-rep}
Every short exact sequence
\begin{align*}
0\to \varphi_*\bm1 \to E \to X \to 0,
\end{align*}
in $\Ind\clg{T}$ splits.
\end{prop}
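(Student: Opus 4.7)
The plan is to deduce the result from the fact that $\varphi_*\bm1$ is an injective object in $\Ind\clg{T}$, whereupon every short exact sequence with $\varphi_*\bm1$ on the left splits because $\Ext^1(X,\varphi_*\bm1) = 0$ for every $X$.

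Injectivity follows immediately from the $(\varphi^*,\varphi_*)$-adjunction together with the exactness of the fibre functor. More precisely, for any $X$ in $\Ind\clg{T}$ we have a natural isomorphism
\begin{align*}
\Hom_{\Ind\clg{T}}(X, \varphi_*\bm1) \simeq \Hom_{\Ind\VecOp}(\varphi^* X, \bm1).
\end{align*}
The ind-extension of $\varphi^*$ is exact (since $\varphi^*$ is exact on $\clg{T}$ and filtered colimits are exact in vector spaces), and the functor $\Hom(\blank,\bm1)$ is exact on $\Ind\VecOp$ because every $F$-vector space is injective (as $F$ is a field, every short exact sequence of vector spaces splits). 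The composite contravariant functor $\Hom_{\Ind\clg{T}}(\blank,\varphi_*\bm1)$ is therefore exact, which is exactly the statement that $\varphi_*\bm1$ is injective in $\Ind\clg{T}$. Equivalently, under the Tannakian equivalence $\clg{T}\simeq \Rep P$ with $P = \scr{G}(\varphi^*)$, the object $\varphi_*\bm1$ corresponds to the regular representation $\clg{O}(P)$ in $\Ind\Rep P$ (see \nref{Rmk.}{rmk:regrep}), which is cofree and hence a well-known injective cogenerator of the category of $\clg{O}(P)$-comodules.

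No step appears genuinely difficult; the only routine check is the exactness of the ind-extensions above, which is standard. Once injectivity is in hand, applying $\Hom(X,\blank)$ to the given short exact sequence produces a surjection $\Hom(X,E) \twoheadrightarrow \Hom(X,X)$, and any preimage of $\id_X$ provides the required splitting.
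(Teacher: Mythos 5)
Your proof is correct and is essentially the paper's argument: the paper's one-line proof (``since $\varphi^*$ is exact, $\varphi_*\bm1$ is injective'') is exactly the adjunction argument you spell out, using that $\bm1$ is injective in $\Ind\VecOp$ so that $\Hom(\blank,\varphi_*\bm1)\simeq\Hom(\varphi^*(\blank),\bm1)$ is exact. Your added detail, including the identification of $\varphi_*\bm1$ with the regular representation, is a faithful elaboration rather than a different route.
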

\begin{proof}
Since $\varphi^*$ is exact,
$\varphi_*\bm1$ is injective.
\end{proof}

\nref{Lemma}{lem:triv-pushout-U} implies that $U$ has a weaker variant of this property,
given by the following proposition:
\begin{prop}[Splitting Property]\label{prop:split-U}
Every short exact sequence of the form
\begin{align*}
0\to U\to E\to \sigma^*V\to 0,
\end{align*}
in $\Ind\clg{T}$, where $V$ is a finite-dimensional vector space,
splits.
\end{prop}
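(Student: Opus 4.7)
The plan is to reduce the Splitting Property to the vanishing $R^1\sigma_* U = 0$ in $\Ind\VecOp$. Indeed, by the $(\sigma^*, R\sigma_*)$-adjunction and the semi-simplicity of $\VecOp$, one has $\Ext^1(\sigma^*V, U) \simeq \Hom(V, R^1\sigma_* U)$, so this vanishing forces every extension of the stated form to split.

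The first step is to compute the transition maps $R^1\sigma_* U_n \to R^1\sigma_* U_{n+1}$ in the directed system. I would apply $\sigma_*$ to the defining extension~\heqref{eq:undefseq} and use $\sigma_*\sigma^* \simeq \id$ (full faithfulness of $\sigma^*$) to obtain the long exact sequence
\begin{align*}
0 \to \sigma_* U_n \to \sigma_* U_{n+1} \to R^1\sigma_* U_n \xrightarrow{\delta} R^1\sigma_* U_n \to R^1\sigma_* U_{n+1} \to \cdots.
\end{align*}
By the choice of extension class in the construction of $U_{n+1}$, the connecting map $\delta$ corresponds to $\id \in \End(R^1\sigma_* U_n)$ under $\Ext^1(\sigma^* R^1\sigma_* U_n, U_n) \simeq \End(R^1\sigma_* U_n)$. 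Hence $\delta$ is the identity, and exactness forces $R^1\sigma_* U_n \to R^1\sigma_* U_{n+1}$ to vanish. This is precisely the cohomological shadow of \nref{Lemma}{lem:triv-pushout-U}.

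The second step is to pass to the colimit $U = \colim_n U_n$. The object $\bm 1 \in \clg{T}$ is compact in the Grothendieck abelian category $\Ind\clg{T}$, and the transition maps $U_n \hookrightarrow U_{n+1}$ are monomorphisms, so the derived functor $R^1\sigma_* = R^1\Hom(\bm 1, \blank)$ commutes with this filtered colimit. Hence $R^1\sigma_* U \simeq \colim_n R^1\sigma_* U_n$, which vanishes because a sequential filtered colimit of zero maps is zero. Combined with the first step, this gives the desired splitting.

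The main obstacle I expect is the justification of the commutation of $R^1\sigma_*$ with the filtered colimit $\colim U_n$; while standard in the theory of Grothendieck abelian categories, it deserves care, since $\sigma_*$ being a right adjoint does not automatically commute with colimits. A way to sidestep it is to apply $\sigma_*$ directly to the given extension $0 \to U \to E \to \sigma^* V \to 0$, extract the exact sequence $0 \to \sigma_* U \to \sigma_* E \to V \to R^1\sigma_* U$, and construct the splitting by lifting $\id_V$ through $\sigma_* E \to V$ using a finite-level approximation coming from the presentation $U = \colim U_n$ together with \nref{Lemma}{lem:triv-pushout-U}.
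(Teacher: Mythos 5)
Your strategy --- reduce the splitting to the vanishing of $R^1\sigma_*U$, show that the transition maps $R^1\sigma_*U_n \to R^1\sigma_*U_{n+1}$ are zero, and then commute $R^1\sigma_*$ with the colimit --- is a genuinely different packaging from the paper's proof, which never computes $R^1\sigma_*U$. The paper instead takes a given extension $0\to U\to E\to \bm1\to 0$, writes $E$ as the filtered colimit of its subobjects $L\in\clg{T}$, picks an $L$ not contained in $U$, and exhibits the extension as the pushout along $L\cap U\to U$ of an extension lying in $\clg{T}$; since $L\cap U$ is compact, that map factors through $U_n\to U_{n+1}$ for some $n$, and \nref{Lemma}{lem:triv-pushout-U} splits the pushout. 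Your first step is correct and is exactly the cohomological content of \nref{Lemma}{lem:triv-pushout-U}: it is the same long exact sequence, with $\delta$ the identity by the choice of extension class, that the paper uses to prove that lemma.

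The weak point is your justification of $R^1\sigma_*(\colim_n U_n)\simeq\colim_n R^1\sigma_*U_n$. Compactness of $\bm1$ only gives that $\sigma_*\simeq\Hom(\bm1,\blank)$ commutes with filtered colimits, not its first derived functor, and the fact that the transition maps are monomorphisms is not the relevant ingredient. The statement is nevertheless true in this setting: all objects of $\clg{T}$ have finite length, so $\Ind\clg{T}$ is locally noetherian, filtered colimits of injectives are injective, and hence $\Ext^1_{\Ind\clg{T}}(\bm1,\blank)\simeq R^1\sigma_*$ does commute with filtered colimits. Alternatively, observe that you only need surjectivity of the comparison map $\colim_n\Ext^1(\bm1,U_n)\to\Ext^1(\bm1,U)$ (the vanishing transition maps then kill every class), i.e., that every extension of $U$ by $\bm1$ arises by pushout from an extension of some $U_n$; proving this by descending the given extension to a compact subobject of $E$ is precisely the paper's argument, and is also what your proposed ``sidestep'' at the end amounts to. So either supply the locally noetherian justification for the commutation, or carry out the sidestep --- in which case your proof converges with the paper's.
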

\begin{proof}
Without loss of generality,
we may replace $\sigma^*V$ by $\bm1$.
We have that $E$ is the filtered colimit of its subobjects lying in $\clg{T}$.
Let $L$ be a subobject of $E$, in $\clg{T}$,
which is not contained in (the image of) $U$
(it is clearly possible to find such an $L$).
Then, let $L' := L\cap U$,
and denote the cokernel of $L' \to L$ by $Q$.
We obtain an inclusion of short exact sequences,
\begin{equation*}\begin{tikzcd}
0 \ar[r] & L' \ar[d, hook] \ar[r] & L \ar[d, hook] \ar[r] & Q \ar[d, hook] \ar[r] & 0 \\
0 \ar[r] & U \ar[r]               & E \ar[r]              & \bm1 \ar[r]           & 0
\end{tikzcd}\end{equation*}
Thus $Q$ has to be either $0$ or $\bm1$,
and it can't be $0$ since $L\neq L'$ by assumption.
We therefore have our sequence as a pushout
along $L' \to U$
of this subsequence lying in $\clg{T}$.
The map $L' \to U$ factors through $U_n \to U_{n+1}$ for some $n$,
and we conclude by \nref{Lemma}{lem:triv-pushout-U}.
\end{proof}

Consider the map $u \colon \varphi^*U\to \bm1$,
obtained from $b^*\alpha_n$:
\begin{equation*}\begin{tikzcd}
b^*f^*W_n \ar[d, "b^*\alpha_n"] \ar[r, "\sim" inner sep=.3mm] & \varphi^*U_n \ar[dl, dotted, "u_n", bend left, near start] \\
\bm1
\end{tikzcd}\end{equation*}
By definition, $u_0\colon\varphi^*\bm1=\varphi^*U_0\to\bm1$ is the map coming from the monoidal structure of $\varphi^*$.
Using the splitting property (\nref{Prop.}{prop:split-U})
and the fact that $u_0\colon\varphi^*U_0\to \bm1$ is an isomorphism,
we can now prove that $U\simeq\varphi_*\bm1$.

\begin{prop}\label{prop:characterizationU}
Assume that $\clg{T}$ is unipotent.
Then, the natural transformation
\begin{align*}
\nu\colon \Hom(\blank, U) &\to \Hom(\varphi^*(\blank), \bm 1) \\
\psi &\mapsto u\circ\varphi^*(\psi)
\end{align*}
is an isomorphism.
In particular, the maps $U\to \varphi_*\bm1$ and $\varphi_*\bm1 \to U$
induced by $u\colon \varphi^*U\to\bm1$ and $\varphi^*\varphi_*\bm1\to\bm1$, respectively,
are mutually inverse isomorphisms.
\end{prop}
\begin{proof}
We need to prove that $\nu_L$ is bijective for every $L\in \clg{T}$.
The method of proof is induction on the unipotency index $n$ of $L$.
More precisely,
in the induction step
we assume that for an $L'$ with unipotency index strictly lower than $n$,
the map $\Hom(L',U)\to \Hom(\varphi^*L',\bm1)$,
sending $\psi'$ to $u\circ\varphi^*(\psi')$,
is a bijection.

\textbf{Surjectivity:}
\emph{Base case:} $L$ is isomorphic to some $\sigma^*V$.
We suppose we're given a map $\bar\psi\colon V\simeq\varphi^*\sigma^*V\to \bm1$.
Applying $\sigma^*$, we get a map $\psi\colon \sigma^*V\to \sigma^*\bm1=U_0\to U$.
Then $\nu(\psi) = u\circ\varphi^*(\psi) = \bar\psi$.

\emph{Induction step:}
Now suppose that $L$ has unipotency index $n>0$ and that
\begin{align}\label{eq:L-induction}
0\to L' \xrightarrow{a} L \xrightarrow{b} \sigma^*V \to 0
\end{align}
where $L'$ has a unipotency index of at most $n-1$.
We suppose we're given a map $\bar\psi\colon \varphi^*L\to \bm1$,
which we need to lift to a map $\psi\colon L\to U$ such that $\nu(\psi)=\bar\psi$.

Composing $\varphi^*(a)$ with $\bar\psi$ yields $\bar\psi'\colon \varphi^*L'\to \bm 1$.
By the induction hypothesis, $\bar\psi'$ can be lifted to $\psi'\colon L'\to U$,
such that $\nu(\psi')=\bar\psi'$.
We push out the short exact sequence~\heqref{eq:L-induction} via $\psi'$:
\begin{equation*}\begin{tikzcd}
0 \ar[r] & L' \ar[d, "\psi'"] \ar[r, "a"] & L \ar[d] \ar[r, "b"] & \sigma^*V \ar[d, equals] \ar[r] & 0 \\
0 \ar[r] & U \ar[r] & E \ar[l, bend left, dotted] \ar[r] & \sigma^*V \ar[r] & 0
\end{tikzcd}\end{equation*}
By \nref{Prop.}{prop:split-U}, the lower sequence must split: choose a splitting.
Denote the composed map $L\to U$ by $\psi_0$.
This is not quite the lift we're looking for (it depends on the choice of splitting).
We have $\psi_0 a=\psi'$, so that
\begin{align*}
u\circ\varphi^*(\psi_0)\circ\varphi^*(a)
  = u\circ\varphi^*(\psi')
  = \bar\psi'
  = \bar\psi\circ\varphi^*(a)
\end{align*}
This implies that $u\circ\varphi^*(\psi_0)-\bar\psi$ factors through $\varphi^*(b)$,
i.e., that there exists $\bar\psi''\colon V\to\bm 1$ such that
\begin{align*}
u\circ\varphi^*(\psi_0)-\bar\psi = \bar\psi''\circ\varphi^*(b).
\end{align*}
By induction, $\bar\psi''$ lifts to $\psi''\colon \sigma^*V\to U$,
and we define $\psi := \psi_0 - \psi''\circ b$.
This is the lift of $\bar\psi$ that we are looking for:
\begin{align*}
\nu(\psi)
  = u\circ\varphi^*(\psi)
  = u\circ\varphi^*(\psi_0-\psi''\circ b)
  = u\circ\varphi^*(\psi_0) - \bar\psi''\circ\varphi^*(b)
  = \bar\psi.
\end{align*}

\textbf{Injectivity:}
\emph{Base case:} $\varphi^*$ is fully faithful on the essential image of $\sigma^*$,
and $u_0$ is an isomorphism.

\emph{Induction step:}
Let $\psi\colon L\to U$ be such that $\nu(\psi):=u\circ\varphi^*(\psi)=0$.
Then, with $L'$ as before,
the composition $L'\to L\to U$ is zero, by induction.
Therefore, $\psi=\psi''\circ b$.
Since $b$ is epic and $\nu(\psi)=0$,
we get $\nu(\psi'')=0$,
and by induction $\psi''=0$,
implying $\psi=0$.
\end{proof}

\begin{rmk}
The above proof is inspired by the proof of
\cite[Prop.~1.17]{lazda},
which Lazda writes he copied from
\cite[Prop.~2.1.6]{hadian}.
\end{rmk}

We have the following useful property of $\varphi_*\bm1$,
which is also enjoyed by $U$, given the previous result:
\begin{lem}\label{lem:keyprop}
We have that
$
\sigma_*(\varphi_*\bm1 \otimes \blank) \simeq \varphi^*(\blank).
$
\end{lem}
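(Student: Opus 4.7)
The plan is to recognise the statement as a direct instance of \nref{Lemma}{lem:proj-conseq} applied to the absolute side of the Standard Situation. On the Tannakian side, the fibre functor $\varphi^*\colon \clg{T}\to\VecOp$ corresponds to the (unique) homomorphism $\varphi\colon \{1\} \to P = \scr{G}(\clg{T})$, and its section $\sigma^*$ corresponds to $\sigma\colon P \to \{1\}$. Since the codomain of $\sigma$ is the trivial group, the identity $\sigma\circ\varphi = \id_{\{1\}}$ holds automatically, so $\sigma$ is a retraction of $\varphi$ in the sense required by \nref{Lemma}{lem:proj-conseq}. That lemma then gives $\varphi^*(\blank)\simeq \sigma_*(\varphi_*\bm1\otimes \blank)$ immediately.

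If one prefers a self-contained argument, the underlying computation is equally short. I would start with the projection formula (\nref{Prop.}{prop:proj}) applied to $p = \varphi$ with $A = \bm1$, giving a natural isomorphism
\begin{align*}
\varphi_*\varphi^*(\blank) \simeq \varphi_*\bm1 \otimes (\blank),
\end{align*}
in $\Ind\clg{T}$. Applying $\sigma_*$ to both sides and using that $\sigma_*\varphi_* \simeq (\varphi^*\sigma^*)_* = \id_*$ (since passing to ind-right adjoints reverses the composition $\varphi^*\sigma^* \simeq \id$), one obtains $\varphi^*(\blank) \simeq \sigma_*(\varphi_*\bm1 \otimes \blank)$.

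There is no real obstacle to overcome: the lemma is a shadow of the classical ``translation by the regular representation'' identity, and the only minor point to check is that the formal dictionary with Lemma~\ref{lem:proj-conseq} applies verbatim in the degenerate case where the ``base'' is the trivial group.
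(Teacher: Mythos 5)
Your proposal is correct and matches the paper's proof exactly: the paper also proves this lemma by applying \nref{Lemma}{lem:proj-conseq} to the degenerate case where the retraction is $P \to \{1\}$, so that $\varphi^* \simeq \sigma_*(\varphi_*\bm1 \otimes \blank)$. Your unpacked version via the projection formula is just the proof of \nref{Lemma}{lem:proj-conseq} specialised to this case, so nothing essentially new is added.
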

\begin{proof}
Apply \nref{Lemma}{lem:proj-conseq}.
\end{proof}

\subsubsection{Characterization of $W$}
In this section, we prove that $W\simeq f_*\bm1$
under the assumptions of \nref{Thm.}{thm:U1}.
We are able to do this just using
the fact that $W$ realizes to $U$ (\nref{Prop.}{prop:realization}),
and the characterization of $U$ (\nref{Prop.}{prop:characterizationU}).

\begin{lem}\label{lem:keypropW}
We have that
$
s_*(f_*\bm1 \otimes \blank) \simeq f^*(\blank).
$
\begin{proof}
Apply \nref{Lemma}{lem:proj-conseq}.
\end{proof}
\end{lem}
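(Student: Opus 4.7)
The plan is to apply \nref{Lemma}{lem:proj-conseq} directly, mirroring the proof of the absolute analogue \nref{Lemma}{lem:keyprop}.

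First, I would translate the situation to the group side via Tannaka duality. On that side, the contravariant identity $f^* \circ s^* \simeq \id_{\clg{E}}$ in $\nTan$ corresponds to the group identity $s \circ f = \id_H$; in other words, $s \colon G \to H$ is a retraction of the section $f \colon H \to G$. This places us in exactly the setting required by \nref{Lemma}{lem:proj-conseq}.

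Next, I would invoke \nref{Lemma}{lem:proj-conseq} with $p := f$ and $r := s$. Its conclusion reads
\[
f^*(\blank) \simeq s_*\bigl(f_*\bm1 \otimes \blank\bigr),
\]
which is precisely the desired natural isomorphism.

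The main obstacle has already been cleared by \nref{Prop.}{prop:proj}, the restriction-induction projection formula underlying \nref{Lemma}{lem:proj-conseq}. After that, the present lemma is purely formal: it is the natural companion of \nref{Lemma}{lem:keyprop}, obtained by swapping the roles of the absolute pair $(\varphi^*, \sigma^*)$ with those of the relative pair $(f^*, s^*)$ in the Standard Situation.
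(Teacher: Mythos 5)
Your proposal is correct and is exactly the paper's argument: the proof there is precisely an application of Lemma~\ref{lem:proj-conseq} with $p:=f$ and $r:=s$ (the retraction dual to $f^*s^*\simeq\id$), which you have simply spelled out via Tannaka duality. Nothing further is needed.
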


By the universal property of $f_*\bm1$,
the map $\alpha\colon f^*W\to \bm1$
induces a map $W\to f_*\bm1$.
This then yields a natural transformation
\begin{align}\label{eq:inducedbyalpha}
s_*(W\otimes \blank) \to s_*(f_*\bm1 \otimes \blank).
\end{align}

\begin{lem}\label{lem:keylemmaforW}
Assume that $\clg{T}$ is unipotent and that
the canonical maps $b^*R^is_*\to R^i\sigma_*B^*$, for $i\leq 1$, are isomorphisms.
Then the natural transformation~\heqref{eq:inducedbyalpha} is an isomorphism.
\end{lem}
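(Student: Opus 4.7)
The plan is to apply the faithful exact functor $b^*$ to the natural transformation~\heqref{eq:inducedbyalpha} and to show it becomes an isomorphism there; faithfulness and exactness of $b^*$ then yield the conclusion in $\Ind\clg{E}$.

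For the target, Lemma~\ref{lem:keypropW} identifies $s_*(f_*\bm1 \otimes X) \simeq f^*X$, so after applying $b^*$ and invoking the given monoidal isomorphism $b^*f^*\simeq\varphi^*B^*$, the target becomes $\varphi^*B^*X$. For the source, I would invoke the degree-$0$ half of the hypothesis $b^*Rs_*\simeq R\sigma_*B^*$, extended to ind-objects in the evident way, to obtain
\[
b^*s_*(W\otimes X) \;\simeq\; \sigma_*(B^*W\otimes B^*X).
\]
Then Proposition~\ref{prop:realisation} identifies $B^*W$ with $U$, Proposition~\ref{prop:characterisationU} identifies $U$ with $\varphi_*\bm1$, and Lemma~\ref{lem:keyprop} collapses $\sigma_*(\varphi_*\bm1\otimes B^*X)$ to $\varphi^*B^*X$. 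Hence both source and target, after $b^*$, are naturally identified with $\varphi^*B^*X$.

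The main obstacle is verifying that~\heqref{eq:inducedbyalpha} under these identifications really corresponds to the identity on $\varphi^*B^*X$. This is a naturality check which one carries out by tracing $W\to f_*\bm1$ through all the identifications. The key point is that $W\to f_*\bm1$ is the map adjoint to $\alpha\colon f^*W\to\bm1$, and by the adjoint description (see Section~\ref{sec:group-iso}) of the canonical algebra homomorphism $B^*f_*\bm1 \to \varphi_*\bm1$, one finds that the composition $B^*W\to B^*f_*\bm1 \to \varphi_*\bm1$ is adjoint to $b^*\alpha$. But by the very definition of $u$ preceding Proposition~\ref{prop:characterisationU}, $b^*\alpha$ is exactly $u\colon \varphi^*U\to\bm1$ under the isomorphism $\varphi^*B^*W \simeq b^*f^*W \simeq \varphi^*U$, and the isomorphism $U\simeq\varphi_*\bm1$ provided by Proposition~\ref{prop:characterisationU} is characterised by being adjoint to $u$. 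Hence all the identifications are coherent and the induced endomorphism of $\varphi^*B^*X$ is the identity.

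The only technical wrinkle I anticipate is ensuring that the canonical map $b^*R^0s_*\to R^0\sigma_* B^*$, stated on $\clg{M}$, is compatible with the filtered colimit $W=\colim_n W_n$ when applied to $W\otimes X$; this should follow from the fact that both $b^*s_*$ and $\sigma_*B^*$ commute with filtered colimits in $\Ind\clg{M}$, so that it suffices to check compatibility on each compact stage $W_n\otimes X$, where the hypothesis applies directly.
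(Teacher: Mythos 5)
Your proposal is correct and follows essentially the same route as the paper: apply $b^*$, identify source and target with $\varphi^*B^*(\blank)$ via the degree-$0$ hypothesis, Prop.~\ref{prop:realisation}, Prop.~\ref{prop:characterisationU}, Lemma~\ref{lem:keyprop} and Lemma~\ref{lem:keypropW}, and conclude by conservativity (faithfulness and exactness) of $b^*$. The only difference is that you spell out the coherence check (that the map induced by $\alpha$ matches $u$ under these identifications) and the filtered-colimit compatibility, both of which the paper leaves implicit.
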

\begin{proof}
After applying $b^*$ we may view the transformation as the composition
\begin{align*}
b^*s_*(W\otimes \blank)
  \simeq \sigma_*(U\otimes B^*(\blank))
  \simeq \sigma_*(\varphi_*\bm1\otimes B^*(\blank))
  \simeq \varphi^*B^*
  &\simeq b^*f^* \\
  &\simeq b^*s_*(f_*\bm1 \otimes \blank),
\end{align*}
using
$b^*s_* \simeq \sigma_*B^*$,
\nref{Prop.}{prop:realization},
\nref{Prop.}{prop:characterizationU},
\nref{Lemma}{lem:keyprop},
and \nref{Lemma}{lem:keypropW}.
We conclude by the conservativity of $b^*$.
\end{proof}

\begin{prop}\label{prop:characterizationW}
Assume that $\clg{T}$ is unipotent and that
the canonical maps $b^*R^is_*\to R^i\sigma_*B^*$, for $i\leq 1$, are isomorphisms.
We have that $\alpha\colon f^*W\to\bm1$ induces an isomorphism
$W\simeq f_*\bm1$.
\end{prop}
\begin{proof}
For every $A\in\clg{M}$, we have natural isomorphisms
\begin{align*}
\Hom(A,W) &\simeq \Hom(\bm1, W\otimes A^\vee) \\
&\simeq \Hom(\bm1, s_*(W\otimes A^\vee)) \\
&\simeq \Hom(\bm1, s_*(f_*\bm1\otimes A^\vee)) & \text{by \nref{Lemma}{lem:keylemmaforW}.}  \\
&\simeq \Hom(\bm1, f_*\bm1\otimes A^\vee) \\
&\simeq \Hom(A, f_*\bm1),
\end{align*}
and we conclude by the Yoneda lemma.
%% Explanation: by Yoneda, W(A) = f_*1(A) for all A in M.
%$ Therefore W = f_*1 as presheaves. Conclude.
\end{proof}

\begin{rmk}[On choices and canonicity.]
Lots of choices are made in the constructions of $W$ and $U$.
Namely,
each time we take an extension corresponding to an extension class,
this involves a choice.
Moreover,
the successive lifts $\alpha_{n+1}$ of $\alpha_n$ giving the map $\alpha \colon f^*W \to \bm1$
all involve choices.
However,
the choice of $\alpha$ is in a sense the only one that matters:
after this,
the comparison isomorphism $B^*W \simeq U$ is uniquely determined,
in view of the universal properties
\nref{Prop.}{prop:characterizationU} and
\nref{Prop.}{prop:characterizationW}.
\end{rmk}

\subsection{Proof of Thm.~\ref{thm:U1}}\label{sec:unipotent-proof}
With the properties above established for $U$ and $W$, we can now prove the main result of this section.

\begin{proof}[Proof of \nref{Theorem}{thm:U1}]
We have that
\begin{align*}
B^*f_*\bm1
  \simeq B^*W
  \simeq U
  \simeq \varphi_*\bm1,
\end{align*}
by Propositions~\ref{prop:characterizationW},~\ref{prop:realization} and~\ref{prop:characterizationU}.
We claim that this isomorphism is the natural algebra morphism
$B^*f_*\bm 1 \to \varphi_*\bm1$
induced by the counit $b^*f^*f_*\bm1\to \bm1$.
This follows from the commutativity of the following diagram:
\begin{equation*}\begin{tikzcd}
\varphi^*B^*f_*\bm1 \ar[dd] \ar[rrr] \ar[dr]
  &[-1.5em]&&[-1.5em] \varphi^*B^*W \ar[dd] \ar[r]
  & \varphi^* U \ar[dd, "u"] \ar[r]
  & \varphi^*\varphi_*\bm1 \ar[dd, "\rom{coun}"] \\
& b^*f^*f_*\bm1 \ar[d, "b^*(\rom{coun})"] \ar[r]
  & b^*f^*W \ar[d, "b^*(\alpha)"] \ar[ur]\\
\bm1\ar[r,equals] & \bm1\ar[r,equals] & \bm1\ar[r,equals] & \bm1\ar[r,equals] & \bm1\ar[r,equals] & \bm1
\end{tikzcd}\end{equation*}
\end{proof}

\section{Malcev completions}\label{sec:malcev}
Let $\varrho\colon P\to K$ be a homomorphism of pro-algebraic groups.
\begin{defn}\label{defn:malcev_completion}
Let $\unip(\varrho^*)$ be the smallest Tannakian subcategory
(see
\nref{Def.}{defn:tannak_subcat})
of $\Rep P$
that contains the image of $\varrho^*$
and is stable under extensions.
The Tannaka dual of $\unip(\varrho^*)$ is called the \emph{Malcev completion} of $P$ with respect to $\varrho\colon P\to K$.
\end{defn}

\begin{lem}\label{lem:alt_malcev_defn}
We have that $\unip(\varrho^*)$ is
the
category
of $\Res^{\im(\varrho)}_P$-unipotent objects in $\Rep P$.
\end{lem}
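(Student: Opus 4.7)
The plan is to show that the full subcategory $\clg{C} \subseteq \Rep P$ of $\Res^{\im(\varrho)}_P$-unipotent objects coincides with $\unip(\varrho^*)$, by verifying that $\clg{C}$ has all the properties enjoyed by $\unip(\varrho^*)$ and vice versa.

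First, I would apply Lemma~\ref{lem:im_of_res_from_im} to identify the smallest Tannakian subcategory of $\Rep P$ containing $\im(\varrho^*)$ with the essential image of $\Res^{\im(\varrho)}_P$. Objects of this essential image are precisely those $P$-representations on which $\ker(P \twoheadrightarrow \im(\varrho))$ acts trivially, and this image is itself a Tannakian subcategory (since $\Res^{\im(\varrho)}_P$ is fully faithful with image closed under subquotients, tensor products, and duals, being equivalent to $\Rep(\im(\varrho))$).

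Next, I would verify that $\clg{C}$ is a Tannakian subcategory closed under extensions. Closure under extensions is Lemma~\ref{lem:unip-ext}; closure under direct sums follows since those are (split) extensions. For tensor products, given unipotent filtrations $\{V_i\}$ on $V$ and $\{W_j\}$ on $W$, the tensor-product filtration on $V \otimes W$ has graded pieces $(V_i/V_{i-1}) \otimes (W_j/W_{j-1})$, which lie in $\im(\Res^{\im(\varrho)}_P)$ since this image is closed under $\otimes$. For duals, the dual filtration on $V^\vee$ has graded pieces $(V_i/V_{i-1})^\vee$, again in $\im(\Res^{\im(\varrho)}_P)$. The one step that requires genuine thought is closure under subquotients, which I expect to be the main obstacle: given $W \subseteq V$ with $V$ unipotent, I would put on $W$ the induced filtration $W_i := W \cap V_i$, so that $W_i/W_{i-1}$ injects into $V_i/V_{i-1}$. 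Since $\im(\Res^{\im(\varrho)}_P)$ is closed under subobjects in $\Rep P$, the graded pieces remain in $\im(\Res^{\im(\varrho)}_P)$, making $W$ unipotent. Quotients are handled dually.

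With these properties established, both inclusions are immediate. On the one hand, $\clg{C}$ contains $\im(\varrho^*)$ (since objects of $\im(\varrho^*)$ have unipotency index $\le 1$), is a Tannakian subcategory, and is closed under extensions; by minimality, $\unip(\varrho^*) \subseteq \clg{C}$. On the other hand, $\unip(\varrho^*)$ is a Tannakian subcategory containing $\im(\varrho^*)$, so by Lemma~\ref{lem:im_of_res_from_im} it contains all of $\im(\Res^{\im(\varrho)}_P)$; being stable under extensions by definition, it then contains every $\Res^{\im(\varrho)}_P$-unipotent object, giving $\clg{C} \subseteq \unip(\varrho^*)$.
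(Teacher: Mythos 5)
Your proof is correct and follows essentially the same route as the paper: both inclusions are obtained just as in the paper's proof, using Lemma~\ref{lem:im_of_res_from_im} to see that $\unip(\varrho^*)$ contains the essential image of $\Res^{\im(\varrho)}_P$ (hence, by extension-stability, all unipotent objects), and Lemma~\ref{lem:unip-ext} plus minimality for the reverse inclusion. The only difference is that where the paper appeals to Lemma~\ref{lem:extn_closure} to conclude that the $\Res^{\im(\varrho)}_P$-unipotent objects form a Tannakian subcategory, you verify this directly by the standard filtration arguments (tensor-product and dual filtrations, induced filtrations on subobjects and image filtrations on quotients, using that the essential image of $\Res^{\im(\varrho)}_P$ is closed under subquotients), which is a valid and more self-contained substitute.
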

\begin{proof}
By \nref{Lemma}{lem:im_of_res_from_im},
$\unip(\varrho^*)$ contains the image of $\Res^{\im(\varrho)}_P$.
Thus, it also contains all $\Res^{\im(\varrho)}_P$-unipotent objects,
since it's stable under extensions.
To conclude,
note that the class of $\Res^{\im(\varrho)}_P$-unipotent objects is stable under extension (\nref{Lemma}{lem:unip-ext}),
and apply \nref{Lemma}{lem:extn_closure}.
\end{proof}

\begin{lem}[Extension-closure of a Tannakian subcategory]\label{lem:extn_closure}
Let $\clg{T}$ be a neutral Tannakian category and
$\clg{S}\subseteq \clg{T}$ a Tannakian subcategory.
The smallest subcategory of $\clg{T}$
which contains $\clg{S}$ and is closed under extensions
is also a Tannakian subcategory.
\qed
\end{lem}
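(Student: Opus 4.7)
My plan is to describe $\clg{S}'$, the extension closure of $\clg{S}$ in $\clg{T}$, as the union of an increasing filtration $\clg{S}_0 \subseteq \clg{S}_1 \subseteq \dotsm$, where $\clg{S}_0 := \clg{S}$ and $\clg{S}_{n+1}$ consists of all objects $X \in \clg{T}$ fitting into a short exact sequence $0 \to X' \to X \to X'' \to 0$ with $X', X'' \in \clg{S}_n$. To every object $X \in \clg{S}'$ one may then assign a nonnegative integer (the minimal $n$ with $X \in \clg{S}_n$), and I will verify each of the stability properties required of a Tannakian subcategory by induction on this integer, using the fact that tensor products and duals are exact in $\clg{T}$ (\nref{Rmk.}{rmk:exact-tens-iHom}).

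For closure under tensor products, I would fix $Y \in \clg{S}'$ and induct on the level of $X$. The base case $X \in \clg{S}$ reduces to a further induction on the level of $Y$; for the induction step, given $0 \to X' \to X \to X'' \to 0$ with $X', X'' \in \clg{S}_n$, the exactness of $(\blank) \otimes Y$ gives $0 \to X' \otimes Y \to X \otimes Y \to X'' \otimes Y \to 0$, and by induction the outer terms lie in $\clg{S}'$, hence so does $X \otimes Y$ by extension closure. Closure under duals is analogous, since $0 \to (X'')^\vee \to X^\vee \to (X')^\vee \to 0$ is exact.

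The key point -- and the step I expect to require the most care -- is closure under subquotients, since $\clg{S}'$ was built only from the operation of forming extensions. Here I induct on the level of $X$: the base case uses that $\clg{S}$ itself is closed under subquotients. For the induction step, given $0 \to X' \to X \to X'' \to 0$ with $X', X'' \in \clg{S}_n$ and a subobject $Y \hookrightarrow X$, I set $Y' := Y \cap X'$ and let $Y''$ be the image of $Y$ in $X''$; then $0 \to Y' \to Y \to Y'' \to 0$ is short exact with $Y' \subseteq X'$ and $Y'' \subseteq X''$, so by the inductive hypothesis both outer terms are in $\clg{S}'$, and thus so is $Y$. The case of quotients is symmetric.

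Finally, the remaining Tannakian properties are essentially formal. Direct sums are split extensions, hence in $\clg{S}'$. For the abelian structure, any morphism $f \colon X \to Y$ in $\clg{T}$ with $X, Y \in \clg{S}'$ has its kernel (a subobject of $X$) and its cokernel (a quotient of $Y$) in $\clg{S}'$ by what was just proved, so $\clg{S}'$ is closed under the formation of kernels and cokernels in $\clg{T}$, and a routine check shows that these compute to the kernel and cokernel in $\clg{S}'$, making the inclusion $\clg{S}' \hookrightarrow \clg{T}$ an exact functor between abelian categories.
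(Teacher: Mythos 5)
Your proof is correct, and the paper in fact leaves this lemma without proof (the statement is closed with a \qed), so there is nothing to diverge from: your filtration by extension-length and induction on that level, using exactness of $\otimes$ and $(\blank)^\vee$ for tensor/dual closure and the standard intersection/image short exact sequence for subquotient closure, is exactly the argument the paper implicitly relies on (compare the unipotency-index inductions of \nref{Section}{sec:unipotency}, e.g.\ \nref{Lemma}{lem:unip-ext}). All the required closure properties of a Tannakian subcategory in the sense of \nref{Def.}{defn:tannak_subcat} are verified, so the argument is complete.
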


\begin{rmk}\label{rmk:malcev_completion_filtrations}
We spell things out more explicitly:
by Lemma~\ref{lem:alt_malcev_defn},
$\unip(\varrho^*)$ is the full subcategory of
$\Rep P$
with objects those $X \in \Rep P$
that admit a filtration
\begin{align*}
0 \subseteq X_0 \subseteq \dotsc \subseteq X_n = X
\end{align*}
such that $X_{i+1}/X_i$ lies in the image of $\Res^{\im(\varrho)}_P$.
Equivalently,
the $X_{i+1}/X_i$ are subquotients of objects in
the image of $\varrho^* = \Res^K_P$.
\end{rmk}

Lemma~\ref{lem:alt_malcev_defn} implies that,
replacing $K$ with $\im(\varrho)$,
we may assume that $\varrho$ is surjective,
without changing the Malcev completion.

\begin{rmk}[The universal property of the Malcev completion]\label{rmk:univ_prop_malcev}
Let $\clg{G}$ be the Malcev completion of $\varrho$.
We get an exact sequence of groups
\begin{align*}
1\to \clg{U}\to \clg{G}\to K,
\end{align*}
and the kernel $\clg{U}$ is pro-unipotent (\nref{Rmk.}{lem:unip_ker} and \nref{Lemma}{lem:alt_malcev_defn}).
The map $\clg{G}\to K$ is given dually, by sending a $K$-representation $X$
to $\varrho^*X$, which is an object in the full subcategory $\Rep\clg{G}$ of $\Rep P$.
There is a canonical map $\hat\varrho\colon P\to \clg{G}$ which by definition factors $\varrho$.
It's defined via $\hat\varrho^*$,
which is just the full inclusion of $\Rep\clg{G}$ into $\Rep P$.

The Malcev completion $\clg{G}$ is universal among groups factoring $\varrho$ in this way.
Let $G$ be another pro-algebraic group equipped with a map $G\to K$, which has a pro-unipotent kernel $U$,
and a map $\tilde\varrho\colon P\to G$ factoring $\varrho$.
\begin{equation*}\begin{tikzcd}
1\ar[r] &[-1em] U\ar[r] &[-1em] G \ar[r] &[-1em] K \\[.5em]
        &         & P \ar[u, "\tilde\varrho"]  \ar[ur, "\varrho"']
\end{tikzcd}\end{equation*}
There is then a unique map $\phi\colon\clg{G}\to G$
such that $\phi\circ\hat\varrho = \tilde\varrho$.
To see this,
we need to check that $\tilde{\varrho}^*$ lands in the full subcategory $\Rep\clg{G}$ of $\Rep P$.
Since $\Rep \clg{G}$ is stable under extensions and taking subquotients in $\Rep P$,
we only need to check that $\tilde{\varrho}^*$ lands in $\Rep \clg{G}$ when restricted
to those representations coming from $K$,
by \nref{Prop.}{prop:tannaka-mor-cor}.\ref{enum:surjective} and \nref{Lemma}{lem:unip_ker}.
But on those representations,
$\tilde{\varrho}^*$ coincides with $\varrho^*$,
which we know lands in $\Rep \clg{G}$,
and we are done.

\begin{equation*}\begin{tikzcd}
& P \ar[dl, "\hat\varrho"', bend right=16] \ar[dd, "\varrho" near end] \ar[dr, "\tilde\varrho", bend left=16] \\
\clg{G} \ar[dr, bend right=16] \ar[rr, "\phi" near end, dotted, crossing over] && G \ar[dl, bend left=16] \\
& K
\end{tikzcd}\end{equation*}
\end{rmk}

\begin{lem}\label{lem:malcev-surj}
Given a homomorphism $\varrho\colon P\to K$,
its Malcev completion $\hat\varrho\colon P\to\clg{G}$
is surjective.
\end{lem}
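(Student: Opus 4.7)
The plan is to apply the criterion for surjectivity of a homomorphism of pro-algebraic groups given by \nref{Prop.}{prop:tannaka-mor-cor}.\ref{enum:surjective} to the map $\hat\varrho\colon P \to \clg{G}$. Under Tannaka duality, the functor $\hat\varrho^*$ is precisely the full inclusion $\Rep\clg{G} = \unip(\varrho^*) \hookrightarrow \Rep P$, as recalled in \nref{Rmk.}{rmk:univ_prop_malcev}. We must verify the two conditions of that criterion:

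First, $\hat\varrho^*$ is full. This is immediate, since by definition $\unip(\varrho^*)$ is a \emph{full} subcategory of $\Rep P$, so the inclusion is even fully faithful.

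Second, for every subobject $X \hookrightarrow \hat\varrho^* Y$ in $\Rep P$, we need a subobject $Y' \subseteq Y$ in $\Rep \clg{G}$ with $\hat\varrho^* Y' \simeq X$. Since $\hat\varrho^*$ is just inclusion, $\hat\varrho^* Y = Y$ viewed as an object of $\Rep P$. By the definition of a Tannakian subcategory (\nref{Def.}{defn:tannak_subcat}), $\unip(\varrho^*)$ is closed under taking subquotients in $\Rep P$. Consequently, $X$ already lies in $\unip(\varrho^*) = \Rep \clg{G}$, and we may simply take $Y' := X$. Both conditions of \nref{Prop.}{prop:tannaka-mor-cor}.\ref{enum:surjective} are satisfied, so $\hat\varrho$ is surjective.

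There is no real obstacle here: the result is essentially a formal consequence of the way $\unip(\varrho^*)$ was defined (as a full subcategory closed under subquotients) combined with the standard Tannakian dictionary. The only thing worth emphasising is that closure under subquotients, rather than closure under extensions, is what supplies the desired $Y'$.
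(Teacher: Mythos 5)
Your proof is correct and follows exactly the route the paper intends: the paper's proof is simply "it follows immediately from \nref{Prop.}{prop:tannaka-mor-cor}.\ref{enum:surjective}", and you have spelled out why the two conditions of that criterion hold (fullness of the inclusion, and closure of $\unip(\varrho^*)$ under subquotients as a Tannakian subcategory). Nothing is missing.
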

\begin{proof}
It follows immediately from \nref{Prop.}{prop:tannaka-mor-cor}.\ref{enum:surjective}.
\end{proof}

\begin{defn}\label{def:malcev-complete}
Let $P\to K$ be a homomorphism of pro-algebraic groups.
We say that it's \emph{Malcev complete} if the homomorphism
$\clg{G}\to K$ from the Malcev completion to $K$ is an isomorphism.
\end{defn}

\begin{exa}
As a trivial example, note that $\id\colon P\to P$,
and more generally any automorphism of $P$,
is always Malcev complete.
More generally, the Malcev completion of an injective homomorphism is simply the image of the homomorphism.
\end{exa}

\begin{lem}\label{lem:easy_malcev_complete}
A homomorphism $P\to K$ is Malcev complete if and only if
it's surjective
and the essential image of $\Rep K$ in $\Rep P$ is closed under extensions.
\qed
\end{lem}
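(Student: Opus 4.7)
The plan is to chase through the definitions, relying essentially on the universal description of $\unip(\varrho^*)$ and on \nref{Prop.}{prop:tannaka-mor-cor}. Write $\varrho \colon P \to K$ for the homomorphism and $\hat\varrho \colon P \to \clg{G}$ for its Malcev completion, so that $\Rep\clg{G} = \unip(\varrho^*) \subseteq \Rep P$ by construction.

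For the forward implication, assume $\clg{G} \to K$ is an isomorphism. Surjectivity of $P \to K$ follows by composing the surjection $P \to \clg{G}$ from \nref{Lemma}{lem:malcev-surj} with the isomorphism $\clg{G} \to K$. Under this isomorphism, $\varrho^* \colon \Rep K \to \Rep P$ factors through the equivalence $\Rep K \simeq \Rep\clg{G}$ followed by the full inclusion $\Rep\clg{G} \hookrightarrow \Rep P$. Hence the essential image of $\Rep K$ in $\Rep P$ equals $\unip(\varrho^*)$, which is stable under extensions by its very definition.

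For the reverse implication, assume $\varrho$ is surjective and that $\im(\varrho^*) \subseteq \Rep P$ is stable under extensions. The goal is to show $\unip(\varrho^*) = \im(\varrho^*)$, for then $\varrho^*$ becomes an equivalence $\Rep K \to \Rep\clg{G}$, and Tannaka duality delivers the isomorphism $\clg{G} \simeq K$. The key point is that $\im(\varrho^*)$ is already a Tannakian subcategory of $\Rep P$: closure under $\oplus$, $\otimes$, and duals is automatic from $\varrho^*$ being monoidal, while closure under subquotients uses \nref{Prop.}{prop:tannaka-mor-cor}.\ref{enum:surjective}, which says that because $\varrho$ is surjective, every subobject of some $\varrho^* Y$ is of the form $\varrho^* Y'$. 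Combined with the assumed stability under extensions, $\im(\varrho^*)$ is a Tannakian subcategory of $\Rep P$ closed under extensions and containing $\im(\varrho^*)$; by the minimality in the definition of $\unip(\varrho^*)$, this forces $\unip(\varrho^*) \subseteq \im(\varrho^*)$, and the reverse inclusion is trivial.

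I do not expect any real obstacle: the whole argument rests on recognising $\im(\varrho^*)$ as a Tannakian subcategory under the surjectivity hypothesis, which is exactly what \nref{Prop.}{prop:tannaka-mor-cor} is designed to supply.
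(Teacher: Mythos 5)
Your argument is correct and is exactly the definition-unwinding the paper has in mind when it states this lemma with the proof omitted: surjectivity of the completed map via \nref{Lemma}{lem:malcev-surj} plus the identification $\Rep\clg{G}=\unip(\varrho^*)$ for the forward direction, and \nref{Prop.}{prop:tannaka-mor-cor}.\ref{enum:surjective} to see that $\im(\varrho^*)$ is a Tannakian subcategory, hence equals $\unip(\varrho^*)$ by minimality, for the converse. The only point you leave tacit is that closure under quotients (not just subobjects) follows from exactness of $\varrho^*$, and that faithfulness of $\varrho^*$ is automatic, both of which are standard and harmless.
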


\begin{lem}\label{lem:malcev_malcev}
Let $P \to G \to K$ be homomorphisms
such that $P \to G$ is Malcev complete.
Then,
the Malcev completions $\clg{G}$ and $\clg{G}'$
of $P \to K$ and $G \to K$,
respectively,
coincide.
\end{lem}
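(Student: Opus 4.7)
The plan is to construct mutually inverse maps between the Malcev completions $\clg{G}$ and $\clg{G}'$ using the universal properties from \nref{Rmk.}{rmk:univ_prop_malcev}. In one direction, I would construct a map $\phi\colon\clg{G} \to \clg{G}'$ as follows: the composition $P \to G \to \clg{G}'$ factors $P \to K$ through $\clg{G}' \to K$, whose kernel is pro-unipotent by construction. The universal property of $\clg{G}$ then yields a unique $\phi$ over $K$ with $\phi\circ(P\to\clg{G}) = P \to G \to \clg{G}'$.

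The reverse direction $\psi\colon \clg{G}' \to \clg{G}$ is more delicate, and this is where the assumption that $P \to G$ is Malcev complete is used. First I would lift $P \to \clg{G}$ to a map $G \to \clg{G}$ by forming the fibre product $H := G \times_K \clg{G}$: the first projection $H \to G$ has kernel $\ker(\clg{G}\to K)$, which is pro-unipotent, and the pair $(P \to G,\, P \to \clg{G})$ induces a factorisation of $P \to G$ through $P \to H$. The universal property of Malcev completion applied to $P \to G$, together with the assumption that $G$ is already its own Malcev completion, then produces a unique section $G \to H$ of the first projection whose composition with $P \to H$ is the given map. Composing this section with the second projection $H \to \clg{G}$ gives a map $G \to \clg{G}$ that lies over $K$ (by the fibre product relation) and factors $P \to \clg{G}$ through $P \to G$; the universal property of $\clg{G}'$ finally supplies the desired $\psi$.

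Both compositions $\psi\circ\phi$ and $\phi\circ\psi$ are then readily identified with the identity by invoking the uniqueness clauses of the relevant universal properties. For the composition $\phi\circ\psi$ one also needs that $P \to G$ is surjective (which holds by \nref{Lemma}{lem:malcev-surj}, since $P \to G$ equals its own Malcev completion), in order to conclude that a map out of $G$ over $K$ is already determined by its restriction to $P$. The main obstacle is really just the construction of $\psi$, and the crucial trick is the fibre product $G \times_K \clg{G}$, which repackages the data of the map $P \to \clg{G}$ into a situation in which the Malcev completeness hypothesis of $P \to G$ applies directly.
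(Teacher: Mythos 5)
Your argument is correct, but it is genuinely different from the one in the paper. You work entirely on the group side: you build $\phi\colon\clg{G}\to\clg{G}'$ directly from the universal property of $\clg{G}$, and for the reverse map you first lift $P\to\clg{G}$ to $G\to\clg{G}$ via the fibre product $G\times_K\clg{G}$ (whose first projection has the pro-unipotent kernel $\ker(\clg{G}\to K)$), using that $G$ is its own Malcev completion, and then invoke the universal property of $\clg{G}'$; the identities $\psi\circ\phi=\id$ and $\phi\circ\psi=\id$ follow from uniqueness plus the surjectivity of $P\to G$ (\nref{Lemma}{lem:malcev-surj}), exactly as you say. The paper instead argues Tannakianly in three lines: Malcev completeness of $P\to G$ means precisely that $\Rep G$ sits in $\Rep P$ as a full Tannakian subcategory closed under extensions (\nref{Lemma}{lem:easy_malcev_complete}), and both $\Rep\clg{G}$ and $\Rep\clg{G}'$ are the closures under successive extensions of the objects coming from $K$, computed in $\Rep P$ and in $\Rep G$ respectively; since $\Rep G$ is extension-closed in $\Rep P$, these closures are literally the same subcategory of $\Rep P$, so the two completions coincide on the nose rather than merely being canonically isomorphic. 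Your fibre-product device is, incidentally, the same trick the paper uses to prove functoriality of Malcev completion (\nref{Prop.}{prop:malcev_functoriality}), so your proof could be shortened by citing that functoriality; its advantage is that it is purely formal, relying only on the universal property, surjectivity of Malcev completions, and fibre products, whereas the paper's proof is shorter and gives the stronger, strictly categorical identification.
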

\begin{proof}
Note that $\Rep G$ is a full Tannakian subcategory of $\Rep P$,
and that $\Rep \clg{G}$ and $\Rep \clg{G}'$ are full Tannakian subcategories of
$P$ and $G$, respectively.
We check that $\Rep \clg{G}$ and $\Rep \clg{G}'$ coincide as collections of objects in $\Rep P$.
The former is constructed by successive extensions in $\Rep P$ of objects coming from $K$.
The latter is constructed by successive extensions in $\Rep G$ of objects coming from $K$.
But since $\Rep G$ is closed under extensions in $\Rep P$ by the previous lemma,
we are done.
\end{proof}

\begin{defn}
Let $P$ be a pro-algebraic group.
The \emph{unipotent completion} of $P$ is the Malcev completion of $P\to 1$.
% We denote it by $\hat{P}^\rom{un}$. %% Not used
\end{defn}

The unipotent completion of $P$ is the initial pro-unipotent group equipped with a homomorphism from $P$.
Moreover,
given a homomorphism $P\to K$,
the unipotent completion of $\ker(P \to K)$
is exactly the kernel $\clg{U}$ of the Malcev completion $\clg{G} \to K$.

\begin{rmk}\label{rmk:malcev_lit_diff}
Our context for defining Malcev completions differs a bit from what is often found in the literature.
Usually, $P$ is taken to be a discrete group $\pi$,
$K$ is taken to be (pro-)reductive,
and $\varrho\colon\pi\to K(F)$ is assumed to have Zariski dense image.
We round out this section by discussing these three differences,
as well as functoriality.
\end{rmk}

\begin{defn}
Given a discrete group $\pi$,
the Tannaka dual of $\Rep\pi$  is a pro-algebraic group.
We denote it by $\hat\pi$ and call it the \emph{pro-algebraic completion} of $\pi$.
(Here, $\Rep \pi$ is the category of finite-dimensional $F$-linear $\pi$-representations.)
\end{defn}

The pro-algebraic completion $\hat\pi$ comes equipped with a homomorphism $\pi\to\hat\pi(F)$.
Given a pro-algebraic group $K$,
composing homomorphisms $\hat\pi\to K$ with $\pi\to\hat\pi(F)$
gives a bijection $\Hom(\hat\pi,K) \simeq \Hom(\pi,K(F))$,
which is natural in $K$.
More precisely:
\begin{lem}\label{lem:proalg_adjoint}
The functor $\pi \mapsto \hat\pi$
from discrete groups to pro-algebraic groups
given by pro-algebraic completion is left adjoint to the functor
$P \mapsto P(F)$.
\qed
\end{lem}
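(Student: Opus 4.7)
The plan is to exhibit the claimed bijection $\Hom(\hat\pi, K) \simeq \Hom(\pi, K(F))$ explicitly, and to observe that it is natural in $K$ by construction.

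First, I would define the forward map. Given a homomorphism $\phi \colon \hat\pi \to K$ of pro-algebraic groups, take $F$-points to obtain $\hat\pi(F) \to K(F)$, and precompose with the canonical map $\pi \to \hat\pi(F)$ introduced just before the lemma. This yields a group homomorphism $\pi \to K(F)$, clearly natural in $K$.

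Second, I would construct the inverse using Tannaka duality. Given $\rho \colon \pi \to K(F)$, each object $V$ of $\Rep K$ carries an algebraic $K$-action, hence in particular a $K(F)$-action on its underlying $F$-vector space; pulling back along $\rho$ endows $V$ with a $\pi$-action, and this assembles into a tensor functor $\rho^* \colon \Rep K \to \Rep \pi$ that strictly commutes with the forgetful fibre functors to $\VecOp$. Under the Tannakian equivalence $\hat\pi \simeq \scr{G}(\Rep\pi)$ recorded in \nref{Section}{sec:tannaka}, this tensor functor corresponds to a unique homomorphism $\hat\phi \colon \hat\pi \to K$.

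The remaining step is to verify that these two constructions are mutually inverse. Starting from $\rho$, the composite $\pi \to \hat\pi(F) \to K(F)$ should recover $\rho$: this boils down to the fact that the canonical map $\pi \to \hat\pi(F)$ is, by its very definition, induced by the identity tensor endofunctor of $\Rep\pi$, while $\hat\phi(F)$ is induced by $\rho^*$. Conversely, starting from $\phi$, the reconstructed morphism $\hat\phi$ induces the same tensor functor $\Rep K \to \Rep\pi$ as $\phi$, and hence coincides with $\phi$ by the fully faithful side of Tannaka duality. I do not expect a genuine obstacle here; the only thing to keep straight is the distinction between algebraic $K$-representations and abstract $K(F)$-representations, and the content of the lemma essentially reduces to the universal property of $\hat\pi$ unwound through Tannaka duality.
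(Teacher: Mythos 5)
Your proposal is correct. Note that the paper itself offers no argument here: the lemma is stated with a \qed as an immediate repackaging of the bijection $\Hom(\hat\pi,K)\simeq\Hom(\pi,K(F))$ asserted in the paragraph just before it, so there is no ``paper proof'' to diverge from. What you write is the standard Tannakian verification that this bijection exists and is natural, and it goes through: restriction along $\rho\colon\pi\to K(F)$ is exact, faithful (it does not change underlying linear maps) and strictly compatible with the forgetful fibre functors, so the Deligne--Milne correspondence between homomorphisms of affine group schemes and fibre-functor-compatible tensor functors applies and yields $\hat\phi\colon\hat\pi=\scr{G}(\Rep\pi)\to K$; the two composites are checked exactly as you indicate, using that an $F$-point of $\hat\pi$ is a tensor automorphism of the forgetful functor and that the unit $\pi\to\hat\pi(F)$ sends $g$ to its action on each representation. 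Since the unit together with the bijection natural in $K$ is a universal arrow, this suffices for the adjunction without a separate check of naturality in $\pi$.
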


Given a map from a discrete group $\pi$ to a pro-algebraic group $K$,
our definition of the Malcev completion of the corresponding map $\hat\pi \to K$,
coincides with definition of the Malcev completion of $\pi \to  K$
found in the literature.
This takes care of the first difference in \nref{Rmk.}{rmk:malcev_lit_diff}.

Next, let $\varrho\colon P\to K$ be a homomorphism as before.
As we've seen, replacing $K$ by $\im\varrho$,
we still get the same Malcev completion.
Therefore, we can assume that $\varrho$ is surjective
(or that the image is Zariski dense,
when $P$ is discrete).

Finally,
replacing $K$ with a quotient by a pro-unipotent normal subgroup,
still gives the same Malcev completion
(the most important example of this being the maximal (pro-)reductive quotient of $K$):
\begin{lem}\label{lem:malcev_red_quot}
Let $\varrho \colon P \to K$ be a homomorphism and
$q \colon K \twoheadrightarrow S$ a quotient by a pro-unipotent normal subgroup.
Then the Malcev completions of $P \to K$ and $P \to S$ coincide.
\end{lem}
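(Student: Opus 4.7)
The plan is to show directly that the defining Tannakian subcategories coincide. Write $\clg{T}_K \subseteq \Rep P$ for the smallest Tannakian subcategory containing $\im(\Res^K_P)$ and closed under extensions, and $\clg{T}_S \subseteq \Rep P$ for the analogous object defined using $\Res^S_P$. By the construction of Malcev completions (\nref{Def.}{defn:malcev_completion}), it suffices to prove that $\clg{T}_K = \clg{T}_S$ as full subcategories of $\Rep P$.

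The inclusion $\clg{T}_S \subseteq \clg{T}_K$ is immediate: since $P \to S$ factors as $P \to K \xrightarrow{q} S$, we have $\Res^S_P \simeq \Res^K_P \circ q^*$, so $\im(\Res^S_P) \subseteq \im(\Res^K_P) \subseteq \clg{T}_K$; but $\clg{T}_K$ is itself a Tannakian subcategory of $\Rep P$ closed under extensions, so it contains the minimal such category $\clg{T}_S$.

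For the reverse inclusion $\clg{T}_K \subseteq \clg{T}_S$, it is enough to show $\im(\Res^K_P) \subseteq \clg{T}_S$, since $\clg{T}_S$ is already a Tannakian subcategory closed under extensions. Fix $V \in \Rep K$. The key input is the pro-unipotency of $\ker(q)$: by \nref{Lemma}{lem:unip_ker} applied to $q\colon K \twoheadrightarrow S$, the category $\Rep K$ is $\Res^S_K$-unipotent, so $V$ admits a filtration
\begin{align*}
0 = V_0 \subseteq V_1 \subseteq \dotsb \subseteq V_n = V
\end{align*}
in $\Rep K$ whose successive quotients $V_i/V_{i-1}$ all lie in the essential image of $q^*\colon \Rep S \to \Rep K$. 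Applying the exact functor $\Res^K_P$ yields a filtration of $\Res^K_P V$ in $\Rep P$ whose successive quotients lie in $\im(\Res^K_P \circ q^*) = \im(\Res^S_P) \subseteq \clg{T}_S$. Since $\clg{T}_S$ is closed under extensions, $\Res^K_P V \in \clg{T}_S$, as desired.

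There is no real obstacle here: the whole argument reduces to unpacking the definitions and applying \nref{Lemma}{lem:unip_ker} to the surjection $K \twoheadrightarrow S$ with pro-unipotent kernel. The step that does the actual work is translating pro-unipotency of the kernel into a concrete unipotent filtration of an arbitrary $K$-representation by $S$-representations, which is precisely what \nref{Lemma}{lem:unip_ker} provides.
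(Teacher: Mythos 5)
Your proposal is correct and follows essentially the same route as the paper: both directions are proved by comparing the two Tannakian subcategories of $\Rep P$, with the trivial inclusion one way and, for the reverse, using \nref{Lemma}{lem:unip_ker} applied to $q\colon K \twoheadrightarrow S$ to filter any $K$-representation by objects pulled back from $S$ and then invoking extension-stability. Your write-up is merely a bit more explicit (e.g.\ transporting the filtration via exactness of $\Res^K_P$) than the paper's version.
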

\begin{proof}
We check it on the side of the Tannakian categories.
Trivially, $\unip(\varrho^*)$ contains $\unip(\varrho^*q^*)$
(both are full subcategories of $\Rep P$).
For the other direction,
note that every object in $\Rep K$ is obtained by
successive extension from objects coming from $q^*$
(\nref{Lemma}{lem:unip_ker}).
Thus, everything in the image of $\varrho^*$ is obtained by
successive extension from objects coming from $\varrho^*q^*$.
This gives the reverse inclusion,
using the extension-stability of $\unip(\varrho^*q^*)$ in $\Rep P$.
\end{proof}

\begin{rmk}
By \nref{Lemma}{lem:malcev-surj},
if $\varrho\colon P\to K$ is Malcev complete,
then $\varrho$ is surjective.
Moreover, the group $K$ and the homomorphism $P\to K$
are determined by the maximal pro-reductive quotient $S$ of $K$
and the homomorphism $P\to S$.
We recover $K$ as the Malcev completion of $P\to S$.
\end{rmk}

\begin{prop}[Functoriality]\label{prop:malcev_functoriality}
Consider a commutative diagram of pro-algebraic groups,
\begin{equation*}\begin{tikzcd}
K \ar[rr] &[-1em]&[-1em] K' \ar[from=dd] \\[-1em]
& \clg{G} \ar[ul] \ar[rr, dotted, crossing over] &&[-1em] \clg{G}' \ar[ul] \\[-1em]
P \ar[uu] \ar[ur] \ar[rr] && P' \ar[ur]
\end{tikzcd}\end{equation*}
where $\clg{G}$ and $\clg{G}'$ are the Malcev completions
of $P \to K$ and $P' \to K'$, respectively.
Then, there is a unique group homomorphism
$\clg{G} \to \clg{G'}$
making everything commute.
Moreover, this construction is functorial.
\end{prop}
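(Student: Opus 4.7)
The plan is to apply the universal property of the Malcev completion (\nref{Rmk.}{rmk:univ_prop_malcev}) to a suitable auxiliary pullback group. Form the fibre product $\clg{H} := K \times_{K'} \clg{G}'$, equipped with its two canonical projections to $K$ and to $\clg{G}'$. The kernel of $\clg{H} \to K$ is canonically isomorphic to the kernel of $\clg{G}' \to K'$, which is pro-unipotent by construction of the Malcev completion; hence $\clg{H} \to K$ also has pro-unipotent kernel. By the commutativity of the outer square of the given diagram, the two maps $P \to K$ and $P \to P' \to \clg{G}' \to K'$ agree in $K'$, and so induce a canonical map $P \to \clg{H}$ which factors the structural map $P \to K$.

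Now \nref{Rmk.}{rmk:univ_prop_malcev} applies to $\clg{H}$ and produces a unique homomorphism $\clg{G} \to \clg{H}$ compatible with the given maps from $P$ and to $K$. Composing with the projection $\clg{H} \to \clg{G}'$ gives the desired homomorphism $\clg{G} \to \clg{G}'$; the four required commutativity relations then follow by unwinding the construction (the only non-obvious one, namely compatibility with $P' \to \clg{G}'$, reduces to the fact that the composite $P \to \clg{G} \to \clg{H} \to \clg{G}'$ equals $P \to P' \to \clg{G}'$ by construction of $P \to \clg{H}$).

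For uniqueness, suppose $\phi \colon \clg{G} \to \clg{G}'$ makes the diagram commute. Pair $\phi$ with the structural map $\clg{G} \to K$ to obtain a homomorphism $\clg{G} \to \clg{H}$ which factors $P \to \clg{H}$; by the universal property of $\clg{G}$ this map must agree with the one constructed above, and composing with $\clg{H} \to \clg{G}'$ shows that $\phi$ coincides with our constructed map. Functoriality is then immediate from uniqueness: the identity on $\clg{G}$ satisfies the defining commutativity properties for the identity square, and for a composition of two commutative squares both the composite of the constructed maps and the map produced directly from the outer square satisfy the universal characterisation, so they agree.

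The main obstacle --- really a routine check rather than a genuine difficulty --- is verifying that the projection $\clg{H} \to K$ has pro-unipotent kernel, which follows at once from the standard fact that the kernel of a base change of group schemes is (canonically isomorphic to) the kernel of the original map.
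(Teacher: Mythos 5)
Your proposal is correct and follows essentially the same route as the paper: both form the fibre product $K \times_{K'} \clg{G}'$, observe that its projection to $K$ has the same pro-unipotent kernel as $\clg{G}' \to K'$, and then invoke the universal property of the Malcev completion $\clg{G}$ to produce and characterise the map, with functoriality deduced from uniqueness. Your write-up merely spells out the uniqueness and compatibility checks that the paper leaves implicit.
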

\begin{proof}
To give such a map
is the same as to give a map
$\clg{G} \to \clg{G}' \times_{K'} K$,
which
when composed with the projection to $K$
gives the canonical map $\clg{G} \to K$,
and which when precomposed with $P \to \clg{G}$
gives the obvious arrow $P \to \clg{G}' \times_{K'} K$.
But the kernel
\begin{align*}
\ker\left( \clg{G}' \times_{K'} K \to K \right)
  = \ker\left( \clg{G}' \to K' \right)
\end{align*}
is pro-unipotent,
so the universal property of $\clg{G}$ gives us a unique map
of the form we wanted.
Functoriality follows easily from the uniqueness.
\end{proof}

\begin{prop}\label{prop:malcev_adjoint}
Malcev completion gives a functor from the category of homomorphisms of pro-algebraic groups,
to the category of homomorphisms of pro-algebraic groups with pro-unipotent kernels.
Moreover, this functor is left adjoint to the inclusion functor.
\end{prop}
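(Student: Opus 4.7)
The plan is to reduce the adjunction to a single clean observation: any homomorphism $\sigma\colon G \to K'$ with pro-unipotent kernel is its own Malcev completion. Once this is in hand, the result follows almost formally from \nref{Prop.}{prop:malcev_functoriality} (which also handles the functoriality part of the present proposition) and the universal property recorded in \nref{Rmk.}{rmk:univ_prop_malcev}.

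First I would prove the core observation. By \nref{Lemma}{lem:alt_malcev_defn}, the category $\unip(\sigma^*) \subseteq \Rep G$ consists of the $\Res^{\im(\sigma)}_G$-unipotent objects, while \nref{Lemma}{lem:unip_ker} says that $\sigma$ having pro-unipotent kernel is equivalent to every object of $\Rep G$ being $\Res^{\im(\sigma)}_G$-unipotent. Hence $\unip(\sigma^*) = \Rep G$, so its Tannaka dual---the Malcev completion of $\sigma$---is $G$ itself, with structural map to $K'$ equal to $\sigma$ and canonical map from the source equal to $\id_G$.

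Next I would spell out the adjunction. The unit at a source object $\varrho\colon P \to K$ is the canonical morphism with components $(\hat\varrho, \id_K)$, landing in $(\clg{G} \to K)$. Given a target object $\sigma\colon G \to K'$, the proposed bijection sends a source morphism $f$ to the morphism of Malcev completions produced by \nref{Prop.}{prop:malcev_functoriality}---which lands in the target category by the core observation, since $\sigma$ is fixed by Malcev completion---and sends a target morphism $g$ to $g \circ \hat\varrho$. Mutual-inverseness in one direction holds by construction, as the Malcev-completion morphism, pre-composed with $\hat\varrho$, recovers the given $f$; in the other direction it follows from the uniqueness in the universal property (\nref{Rmk.}{rmk:univ_prop_malcev}): both $g$ and the Malcev completion of $g \circ \hat\varrho$ are target morphisms whose pre-composition with $\hat\varrho$ equals $g \circ \hat\varrho$, forcing them to agree. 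Naturality in both arguments is immediate from the functoriality of Malcev completion.

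The only non-formal step is the core observation, but this is a direct combination of \nref{Lemma}{lem:unip_ker} and \nref{Lemma}{lem:alt_malcev_defn}, so I do not anticipate any real difficulty; the remainder is adjunction bookkeeping.
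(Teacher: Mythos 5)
Your proposal is correct and follows essentially the same route as the paper: the paper's proof is just ``functoriality (\nref{Prop.}{prop:malcev_functoriality}) plus the universal property with its uniqueness statement'', which is exactly the bookkeeping you carry out. Your explicit ``core observation'' that a homomorphism with pro-unipotent kernel is its own Malcev completion (via \nref{Lemma}{lem:unip_ker} and \nref{Lemma}{lem:alt_malcev_defn}) is left implicit in the paper but is a correct and welcome clarification of why the adjunction makes sense as a reflection onto the subcategory.
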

\begin{proof}
We already proved the functoriality,
what is left to show is the adjunction.
This follows easily from the universal property of Malcev completion,
and the functoriality above (including the uniqueness statement).
\end{proof}

\section{The Malcev completeness criterion}\label{sec:thmM}
We once again place ourselves in the Standard Situation,
keeping all the notation from \nref{Section}{sec:standard}:
\begin{equation*}\begin{tikzcd}
\clg{M} \ar[d, "f^*" '] \ar[r, "B^*"] & \clg{T} \ar[d, "\varphi^*" '] \\
\clg{E} \ar[u, bend right, "s^*" '] \ar[r, "b^*" '] & \VecOp \ar[u, bend right, "\sigma^*" ']
\end{tikzcd}
\qquad\quad
\begin{tikzcd}
1\ar[r] & K\ar[r,"\iota"]              & G\ar[r,"s"'] & H \ar[l, bend right, "f"'] \ar[r] & 1 \\
        & P \ar[u,"\varrho"] \ar[ur, bend right=24, "B"']
\end{tikzcd}\end{equation*}

Let us introduce some more notation that will be helpful in discussing the
Malcev completeness of $\varrho$.
Fix a quotient $q_1\colon K\twoheadrightarrow S$ with a pro-unipotent kernel.
Denote the composition of $\varrho$ with $q_1$ by
$\bar\varrho\colon P\to S$.
We've seen, in \nref{Lemma}{lem:malcev_red_quot},
that the Malcev completions of $\varrho$ and $\bar\varrho$ coincide.

Let $\mlc$ be the full subcategory $\unip(\bar\varrho^*)$ of $\clg{T}$,
as in \nref{Def.}{defn:malcev_completion}.
Its objects are those built by successive extension from objects in the image of $\bar\varrho^*$
(see Rmk~\ref{rmk:malcev_completion_filtrations}).
In the following diagram, I omit the (systematically named) sections
($s^*,t^*,\sigma^*$ of $f^*,g^*,\varphi^*$, plus variants with tilde)
in order to reduce clutter.
Let $\widetilde{\clg{M}}$ be the (full) preimage of $\Rep S$ in $\clg{M}$.
\begin{equation*}\begin{tikzcd}
&&& \mlc \ar[dr, "\hat\varrho^*", hook, bend left=16]
\\
\clg{M} \ar[dd, "f^*" '] \ar[rr, "\iota^*"]
  && \Rep(K)
    \ar[ur, "\phi^*", bend left=16]
    \ar[dd, "g^*" ' near start]
    \ar[rr, "\varrho^*" near end]
  && \clg{T} \ar[dd, "\varphi^*"]
\\
& \widetilde{\clg{M}}
    \ar[ul, hook', bend left=16, "p^*"']
    \ar[rr, "\tilde\iota^*" near start, crossing over]
    \ar[dl, "\tilde f^*"]
  && \Rep(S)
    \ar[uu, bend left, "q_2^*" near end, crossing over]
    \ar[ul, hook', bend left=16, "q_1^*" ']
    \ar[ur, "\bar\varrho^*", bend right=16]
    \ar[dl, "\tilde g^*"]
\\
\clg{E} \ar[rr, "b^*" ']
  && \VecOp \ar[rr, equal]
  && \VecOp
\end{tikzcd}\end{equation*}

Writing $\clg{K}$ for the Malcev completion of $\bar\varrho\colon P\to S$,
i.e., the Tannaka dual $\scr{G}(\mlc)$ of $\mlc$,
we can formulate our goal as the following theorem:
\begin{thm}[Malcev completeness criterion]\label{thm:M}
Assume that $\bar\varrho$ is surjective and that
the canonical maps $b^*R^is_* \to R^i\sigma_*B^*$, for $i=0,1$, are isomorphisms.
Then $\phi\colon\clg{K}\to K$ is an isomorphism
(or equivalently, $\varrho\colon P \to K$ is Malcev complete).
\end{thm}

\begin{rmk}
This is the main technical result of this paper,
and it has two key assumptions:
the surjectivity of $\bar\varrho$ we refer to as the \emph{surjectivity assumption},
and the other assumption we call the \emph{cohomological assumption}.
Note that the conclusion of the theorem implies the surjectivity of $\varrho$.
\end{rmk}

Our general strategy is, as it was for
Thm.~\ref{thm:U2},
to utilize \nref{Prop.}{prop:group-iso}.
That means we construct models of the regular representations of the groups and
show that the natural map between them is an isomorphism.

Since both $K$ and $\clg{K}$ are extensions of $S$ by unipotent groups,
it's not too surprising that the unipotent theorem (Thm.~\ref{thm:U2}) becomes useful.
% NOTE: I don't like this vague sentence, but not sure what to say instead:
In \nref{Section}{sec:unipotent-module-cats},
we show how, given a relatively unipotent neutral Tannakian category,
one can produce a new one,
which is unipotent over a deeper base
(\nref{Prop.}{prop:module}).
In the rest of \nref{Section}{sec:thmM}, we use this result to apply the unipotent theorem,
and finally prove Thm.~\ref{thm:M}.

\subsection{On certain tensor products of Tannakian categories}\label{sec:unipotent-module-cats}
Consider a 2-commutative diagram
\begin{equation*}\begin{tikzcd}
& \clg{B} \ar[dd, "f^*"] \\
\clg{A} \ar[dr, "f^*\circ\, p^*=\tilde f^*" '] \ar[ur, "p^*"] \\
& \clg{C} \ar[ul, bend right, "\tilde{s}^*"']
\end{tikzcd}\end{equation*}
in $\nTan$.
We assume that $\tilde{s}^*$ is a monoidal 2-section of $\tilde{f}^*$.
This gives a monoidal 2-section $s^* := p^*\tilde{s}^*$ of $f^*$.
Before we move on to the main topic of this subsection,
we state a basic lemma which becomes useful in the sequel.

As in \nref{Section}{sec:group-iso},
we have an algebra morphism
$p_\rom{alg}\colon p^*\tilde{f}_*\bm1\to f_*\bm1$.
It is uniquely characterized
by a compatibility with Hopf algebra counits after applying $f^*$.
Under adjunction, it corresponds to a morphism
$\tilde{f}_*\bm1\to p_*f_*\bm1$,
and this morphism is also a morphism of algebras.
On the other hand,
there is an isomorphism
$\tilde{f}_*\bm1\simeq p_*f_*\bm1$
of objects
% NOTE: "obviously" of algebras according to Joseph
in $\Ind\clg{A}$,
induced by the $2$-commutativity of the above diagram.

\begin{lem}\label{lem:double-right-adjoint-alg-iso}
The two morphisms
$\tilde{f}_*\bm1\to p_*f_*\bm1$,
described above,
coincide.
\qed
\end{lem}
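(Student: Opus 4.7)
The plan is to unwind both morphisms via the chain of adjunctions $(p^{*}, p_{*})$ and $(f^{*}, f_{*})$, and show that they become the same map $\tilde{f}^{*}\tilde{f}_{*}\bm1 \to \bm1$, which can then be identified with the Hopf algebra counit. Concretely, both morphisms in question sit in $\Hom(\tilde{f}_{*}\bm1, p_{*}f_{*}\bm1)$, which by the $(p^{*}, p_{*})$-adjunction is naturally isomorphic to $\Hom(p^{*}\tilde{f}_{*}\bm1, f_{*}\bm1)$, and then by the $(f^{*}, f_{*})$-adjunction (using $f^{*}p^{*} \simeq \tilde{f}^{*}$) to $\Hom(\tilde{f}^{*}\tilde{f}_{*}\bm1, \bm1)$.

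First, I would handle the morphism $\tilde{f}_{*}\bm1 \to p_{*}f_{*}\bm1$ coming from $p_{\mathrm{alg}}$. By construction, $p_{\mathrm{alg}}\colon p^{*}\tilde{f}_{*}\bm1 \to f_{*}\bm1$ is the image of our morphism under the $(p^{*}, p_{*})$-adjunction, and by the discussion preceding \nref{Prop.}{prop:group-iso} (applied in the present setting), $p_{\mathrm{alg}}$ corresponds under the $(f^{*}, f_{*})$-adjunction precisely to the counit of the $(\tilde{f}^{*}, \tilde{f}_{*})$-adjunction applied to $\bm1$, i.e., the Hopf algebra counit $\tilde{f}^{*}\tilde{f}_{*}\bm1 \to \bm1$.

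Next, I would trace the 2-commutativity isomorphism $\tilde{f}_{*}\bm1 \simeq p_{*}f_{*}\bm1$ through the same two adjunctions. This isomorphism is produced by applying uniqueness of right adjoints to the natural isomorphism $\tilde{f}^{*} \simeq f^{*}\circ p^{*}$; equivalently, it is the unique arrow whose image under the composite adjunction $\Hom(\tilde{f}_{*}\bm1, p_{*}f_{*}\bm1) \simeq \Hom(\tilde{f}^{*}\tilde{f}_{*}\bm1, \bm1)$ is the counit $\tilde{f}^{*}\tilde{f}_{*}\bm1 \to \bm1$. This is a formal consequence of the fact that any natural isomorphism of right adjoints is determined by the coincidence of the corresponding counits, once the 2-isomorphism $f^{*}p^{*} \simeq \tilde{f}^{*}$ is fixed.

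Comparing the two computations, both morphisms correspond under the same chain of adjunctions to the same map $\tilde{f}^{*}\tilde{f}_{*}\bm1 \to \bm1$, hence they agree. The only mildly delicate point is to be careful with the identification $f^{*}p^{*} \simeq \tilde{f}^{*}$ used on both sides so that one really is comparing morphisms under a single adjunction isomorphism; this is essentially a diagram-chase, and I expect it to present no substantive obstacle. Once this is checked, the lemma follows immediately.
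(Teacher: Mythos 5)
Your argument is correct and is exactly the formal verification the paper has in mind: the lemma is stated with no proof (it is deemed immediate), and the intended justification is precisely your observation that both maps correspond, under the composite adjunction $\Hom(\tilde{f}_*\bm1, p_*f_*\bm1)\simeq\Hom(\tilde{f}^*\tilde{f}_*\bm1,\bm1)$ fixed by the 2-isomorphism $f^*p^*\simeq\tilde{f}^*$, to the counit $\tilde{f}^*\tilde{f}_*\bm1\to\bm1$, since the uniqueness-of-right-adjoints isomorphism is characterised by compatibility of counits. No gap; your care about using a single fixed identification $f^*p^*\simeq\tilde{f}^*$ on both sides is the only point that needs attention, and you address it.
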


Let $A:=p^*(\tilde f_*\bm1)$: it's an algebra  object in $\Ind\clg{B}$.
We consider the tensor category $\Modfg(A)$ of finitely generated $A$-modules in $\Ind\clg{B}$,
as in \nref{Section}{sec:modules},
sitting as a full subcategory in the abelian tensor category $\Mod(A)$ of $A$-modules in $\Ind\clg{B}$.

As in \nref{Section}{sec:standard}, it's a fact that $f^*A = \tilde f^*\tilde f_*\bm1$ is a Hopf algebra object in $\clg{C}$,
which thus has a counit map $f^*A\to \bm1$.
We may therefore define a monoidal, right-exact functor $w^*\colon\Modfg(A)\to \clg{C}$
by $w^*(\blank) := \bm1\otimes_{f^*A}f^*(\blank)$.
(A finitely generated $\bm1$-module in $\Ind\clg{C}$ is just an object of $\clg{C}$.)
It has a monoidal\footnote{%
  In particular, the isomorphism $w^*e^*\simeq\id$ is monoidal.}
2-section $e^*$ defined by $e^*(\blank) := A\otimes s^*(\blank)$.

\begin{prop}\label{prop:module}
Assume $\clg{B}$ is $p^*$-unipotent,
that $p^*$ is fully faithful,
and that the essential image of $p^*$ is closed under taking subobjects.
Then $\Modfg(A)$ is
\emph{(i)} $e^*$-unipotent and
\emph{(ii)} neutral Tannakian.
Moreover, $w^*$ is faithful and exact.
\end{prop}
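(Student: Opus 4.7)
The plan is to first establish part (i), the $e^*$-unipotency of $\Modfg(A)$, and then to bootstrap from it to part (ii) and the remaining properties of $w^*$. The cornerstone is the following observation: every free $A$-module generated by an object in the essential image of $p^*$ is already, up to isomorphism, in the essential image of $e^*$.

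More precisely, for each $Y \in \clg{A}$, two applications of the projection formula (\nref{Prop.}{prop:proj}) to $\tilde{f}$ yield isomorphisms of $\tilde{f}_*\bm{1}$-modules in $\Ind\clg{A}$:
\[
\tilde{f}_*\bm{1} \otimes Y \simeq \tilde{f}_*\tilde{f}^*Y \simeq \tilde{f}_*\bm{1} \otimes \tilde{s}^*\tilde{f}^*Y,
\]
where the second isomorphism uses $\tilde{f}^*\tilde{s}^* \simeq \id$. Pulling back along the monoidal functor $p^*$ then produces an $A$-module isomorphism $A \otimes p^*Y \simeq A \otimes s^*\tilde{f}^*Y = e^*(\tilde{f}^*Y)$. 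Given $M \in \Modfg(A)$ with a presenting surjection $A \otimes X \twoheadrightarrow M$ for some $X \in \clg{B}$, I would then filter $X$ using the $p^*$-unipotency of $\clg{B}$ and tensor with $A$ to obtain an $e^*$-unipotent filtration of $A \otimes X$; since the essential image of $e^*$ is closed under quotients in $\Mod(A)$, the quotient $M$ is also $e^*$-unipotent, establishing (i).

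For (ii), I would first verify that the essential image of $e^*$ is closed under subobjects in $\Mod(A)$, analogously to the closure of $s^*(\clg{C}) \subseteq \clg{B}$ under subobjects (which follows from \nref{Prop.}{prop:tannaka-mor-cor}.\ref{enum:surjective} applied to the surjection $s$). Granting this, any submodule $M' \subseteq M$ in $\Modfg(A)$ inherits an $e^*$-unipotent filtration by intersection, is consequently finitely generated, and so $\Modfg(A)$ is abelian. Rigidity follows from $e^*$-unipotency by inductively constructing duals: on the base case $e^*(V)$ one takes $e^*(V^\vee)$, and extensions are handled by the standard dual-of-extension construction. The identity $\End_{\Modfg(A)}(\bm{1}) = F$ is a consequence of $\bm{1}_{\Modfg(A)} = e^*(\bm{1}_{\clg{C}})$ together with the full faithfulness of $e^*$ (which itself follows from the projection formula combined with $s_*A \simeq \bm{1}$, a consequence of the surjectivity of $p$). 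Finally, right-exactness of $w^*$ is immediate from its definition as a tensor product, while left-exactness and faithfulness are verified by $e^*$-unipotent induction starting from the identity $w^*\circ e^* \simeq \id_{\clg{C}}$.

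The main obstacle is the cornerstone observation: specifically, verifying that the projection-formula isomorphism $\tilde{f}_*\tilde{f}^*Y \simeq \tilde{f}_*\bm{1} \otimes Y$ is one of $\tilde{f}_*\bm{1}$-modules, and not merely of underlying objects in $\Ind\clg{A}$. This compatibility rests on \nref{Lemma}{lem:double-right-adjoint-alg-iso} and on carefully matching the various algebra and module structures in play. A secondary subtlety is the closure of $e^*(\clg{C})$ under both subobjects and quotients in $\Mod(A)$, on which several of the later steps depend.
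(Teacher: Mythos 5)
There is a genuine gap, and it sits exactly where you put the least weight. Your proof of (i) hinges on the assertion that ``the essential image of $e^*$ is closed under quotients in $\Mod(A)$'', and your proof of (ii) additionally needs it to be closed under subobjects; you defer both to a closing remark calling them a ``secondary subtlety''. In the paper these closure statements are the heart of the matter: quotient-closure is \nref{Lemma}{lem:module-base-case}, and its proof requires two prior non-trivial inputs — the Noetherianity of finitely generated free $A$-modules (\nref{Lemma}{lem:fg-noetherian}, proved by $p^*$-unipotent induction with the fundamental theorem of Hopf modules handling the base case) and the closure of $\Modfg(A)$ under subobjects inside $\Mod(A)$ (\nref{Lemma}{lem:fg-abelian}, a filtered-colimit argument) — plus a further argument using the $p^*$-unipotency hypothesis and the fullness of $e^*$. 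None of this machinery appears in your plan, and it is precisely where the three hypotheses of the proposition are consumed. Moreover, your proposed route to subobject-closure of $e^*(\clg{C})$ — ``analogously to the closure of $s^*(\clg{C})\subseteq\clg{B}$ under subobjects, via \nref{Prop.}{prop:tannaka-mor-cor}'' — is circular: that proposition applies to representation categories of pro-algebraic groups, and whether $\Modfg(A)$ is Tannakian (indeed, whether it is even abelian) is exactly what is being proved. The paper avoids ever needing subobject-closure of $\operatorname{im}(e^*)$, handling subobjects only at the level of $\Modfg(A)$ via Noetherianity.

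A second, smaller hole: in your argument for abelianness you infer ``$e^*$-unipotent $\Rightarrow$ finitely generated''. Each graded piece $e^*Z=A\otimes s^*Z$ is finitely generated free, but in this abstract setting an extension of a finitely generated module by a finitely generated (even free) one is not obviously finitely generated — one cannot lift generators without projectivity, and the cheap fix again uses Noetherianity of $e^*Z$, which you have not established. By contrast, the part you flag as ``the main obstacle'' — that $\tilde f_*\tilde f^*Y\simeq \tilde f_*\bm1\otimes Y$ is an isomorphism of $\tilde f_*\bm1$-modules, giving $A\otimes p^*Y\simeq e^*\tilde f^*Y$ — is the easy step; it is exactly the paper's \nref{Lemma}{lem:special-case-proj}, dispatched by the projection formula together with \nref{Lemma}{lem:tensor-with-reg-rep} and \nref{Lemma}{lem:double-right-adjoint-alg-iso}. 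Your endgame (rigidity by extension-stability from the image of $e^*$, $\End(\bm1)=F$ from full faithfulness of $e^*$, and exactness/faithfulness of $w^*$ by unipotent induction rather than by Deligne's \nref{Lemma}{lem:deligne-cor-2.10}) is workable in outline, but it cannot be completed until the closure and finiteness lemmas above are actually proved.
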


\begin{rmk}
In fact, $\Modfg(A)$ is the tensor product
$\clg{B}\otimes_\clg{A}\clg{C}$ of $\clg{B}$ and $\clg{C}$ over $\clg{A}$,
once one has defined what that means.
Group theoretically,
it means that $\scr{G}(\Modfg(A))$ is isomorphic to
$\scr{G}(\clg{B}) \times_{\scr{G}(\clg{A})} \scr{G}(\clg{C})$,
and in particular,
that $\scr{G}(\clg{B}) \to \scr{G}(\clg{A})$
and $\scr{G}(\Modfg(A)) \to \scr{G}(\clg{C})$ have the same (pro-unipotent) kernel.
\end{rmk}

We'll prove this proposition using the following result due to Deligne:
\begin{lem}[{\cite[Cor.~2.10]{deligne90}}]\label{lem:deligne-cor-2.10}
Let $\clg{T}$ be a rigid abelian tensor category satisfying $\End\bm1 = F$,
and let $\clg{M}$ be a non-zero abelian monoidal category with a right exact tensor product.
Then any right exact functor $\clg{T}\to\clg{M}$ is left exact and faithful.
\end{lem}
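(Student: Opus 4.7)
The plan is to exploit the rigidity of $\clg{T}$, transported through the monoidal structure of the tensor functor $\omega\colon\clg{T}\to\clg{M}$ (the monoidal-functor hypothesis being implicit in this setup). The pivotal initial observation is that $\omega$ preserves duals: for any $X\in\clg{T}$, applying $\omega$ to $\ev_X$ and $\coev_X$ and using the monoidal structure exhibits $\omega(X^\vee)$ as a dual of $\omega(X)$ in $\clg{M}$. Consequently $\omega(X)\otimes(\blank)$ admits $\omega(X^\vee)\otimes(\blank)$ as both a left and right adjoint, so it is an exact endofunctor of $\clg{M}$, even though $\clg{M}$ itself need not be rigid.

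For faithfulness, I would invoke the standard fact that in a rigid abelian tensor category with $\End\bm1$ a field, the unit $\bm1$ is simple (see e.g.\ \cite[Prop.~1.17]{deligne-milne}). For non-zero $X\in\clg{T}$, the snake identity prevents $\ev_X\colon X\otimes X^\vee\to\bm1$ from vanishing, and simplicity of $\bm1$ then forces it to be an epi. Right-exactness of $\omega$ gives an epi $\omega(X)\otimes\omega(X)^\vee\twoheadrightarrow\bm1_\clg{M}$; were $\omega(X)=0$, right-exactness of $\otimes$ in $\clg{M}$ would make the source zero, forcing $\bm1_\clg{M}=0$ and contradicting non-triviality of $\clg{M}$. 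Faithfulness on morphisms then reduces via $\Hom(A,B)\simeq\Hom(\bm1,A^\vee\otimes B)$ to showing $\omega(g)\neq 0$ for non-zero $g\colon\bm1\to Y$; such a $g$ is mono (by simplicity of $\bm1$), its dual $g^\vee\colon Y^\vee\to\bm1$ is epi, so $\omega(g)^\vee=\omega(g^\vee)$ is a non-zero epi, whence $\omega(g)\neq 0$.

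For left exactness, given a short exact sequence $0\to A\xrightarrow{i}B\to C\to 0$ in $\clg{T}$, I would first dualize it (duality being exact in $\clg{T}$) and apply right-exact $\omega$ to conclude that $\omega(i)^\vee\colon\omega(B)^\vee\to\omega(A)^\vee$ is an epi. To deduce that $\omega(i)$ is mono, consider any $\tilde g\colon L\to\omega(A)$ in $\clg{M}$ with $\omega(i)\circ\tilde g=0$; the dualizability of $\omega(A)$ and $\omega(B)$ lets us transport $\tilde g$ to a mate $\hat g\colon L\otimes\omega(A)^\vee\to\bm1$, and a direct snake-identity calculation shows the equation $\omega(i)\circ\tilde g=0$ corresponds to $\hat g\circ(\id_L\otimes\omega(i)^\vee)=0$. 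Since tensor in $\clg{M}$ is right exact, $\omega(i)^\vee$ being an epi makes $\id_L\otimes\omega(i)^\vee$ an epi as well, forcing $\hat g=0$ and hence $\tilde g=0$; thus $\omega(i)$ is mono in $\clg{M}$.

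The delicate step is left-exactness: a faithful right-exact additive functor is not in general left exact (as standard module-theoretic examples demonstrate), so the argument must genuinely combine all three ingredients---rigidity of $\clg{T}$ (to dualize the sequence and extract $\omega(i)^\vee$ epi from right-exactness), dualizability of objects in the essential image of $\omega$ (to access the mate adjunction), and right-exactness of tensor in $\clg{M}$ (to promote $\omega(i)^\vee$ epi to $\id_L\otimes\omega(i)^\vee$ epi). Getting the mate calculation right is the main bookkeeping hurdle.
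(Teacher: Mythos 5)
There is nothing in the paper to compare your argument with: the lemma is imported as a black box from Deligne (\cite[Cor.~2.10]{deligne90}) and never proved internally. Judged on its own, your proof is correct, and it follows the standard line of Deligne's argument. Your opening caveat is exactly right and worth stressing: the functor must be assumed monoidal --- the paper's phrasing ``any right exact functor'' is too terse, and without monoidality the statement is false (coinvariants along a non-trivial pro-unipotent group is right exact but neither left exact nor faithful); in every application in the paper ($w^*$, $e^*$, $A\otimes(\blank)$) the functor is indeed monoidal, so no harm is done. The individual steps check out: simplicity of $\bm1$ (from $\End\bm1=F$) makes $\ev_X$ an epimorphism for $X\neq 0$, so right exactness plus monoidality forces $\omega(X)\neq 0$; exactness of duality in $\clg{T}$ turns a mono $i$ into an epi $i^\vee$, right exactness keeps $\omega(i)^\vee$ epi, and your mate computation --- using that monoidal functors preserve duals and that $L\otimes(\blank)$ is right exact in $\clg{M}$ --- correctly converts this into monicity of $\omega(i)$; a right exact additive functor preserving monos is exact. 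Two minor remarks: your mate manipulations tacitly use two-sided duals (i.e.\ symmetry, or at least left and right duals) in $\clg{M}$, which is part of the intended setting here (the paper's tensor categories, and $\Modfg(A)$ in particular, are symmetric monoidal); and once exactness is established, faithfulness already follows from $\omega(X)\neq 0$ for all $X\neq 0$ by applying $\omega$ to the image of a nonzero morphism, so your separate dualization argument for morphisms via $g\colon\bm1\to Y$ is correct but dispensable.
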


The above lemma gives, in particular,
that both $e^*\colon\clg{C}\to\Modfg(A)$
and $A\otimes(\blank)\colon\clg{B}\to\Modfg(A)$
are exact,
which is useful.
We now spend some time establishing the assumptions of Deligne's result for the functor $w^*$.
More precisely,
we prove that $\Modfg(A)$ is rigid and abelian, and that $\End\bm1 = F$.
To prove \nref{Prop.}{prop:module},
it is then essentially enough to show the $e^*$-unipotency.

\begin{lem}\label{lem:fg-noetherian}
Assume that $\clg{B}$ is $p^*$-unipotent,
that $p^*$ is fully faithful,
and that the essential image of $p^*$ is closed under taking subobjects.
Then $\Modfg(A)$ is Noetherian,
i.e., any ascending chain of subobjects of an object in $\Modfg(A)$ must stabilize.
\end{lem}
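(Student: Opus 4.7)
The plan is to proceed by unipotent induction on the source of a finitely generated free module. Since every $M \in \Modfg(A)$ is a quotient of $A \otimes X$ for some $X \in \clg{B}$, and submodules of $M$ correspond to $A$-submodules of $A \otimes X$ containing the kernel of the quotient map, it suffices to prove the ACC on $A$-submodules of $A \otimes X$ for every $X \in \clg{B}$.

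Because $\clg{B}$ is $p^*$-unipotent, I would induct on the unipotency index of $X$. Tensoring with $A = p^*(\tilde{f}_*\bm1)$ is exact on $\Ind\clg{B}$ (the tensor product in any rigid abelian tensor category is exact in each variable, see \nref{Rmk.}{rmk:exact-tens-iHom}), so any short exact sequence $0 \to X' \to X \to p^*Y \to 0$ in $\clg{B}$ induces a short exact sequence $0 \to A \otimes X' \to A \otimes X \to A \otimes p^*Y \to 0$ in $\Mod(A)$. Since the Noetherian condition is preserved under extensions, this reduces to the base case $X = p^*Y$ with $Y \in \clg{A}$.

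For the base case, the projection formula in $\Ind\clg{B}$ gives $A \otimes p^*Y \simeq p^*(\tilde{f}_*\bm1 \otimes Y)$. The full faithfulness of $p^*$ together with the closure of its essential image under subobjects (which extends to the ind-setting since any subobject in an ind-category is a filtered colimit of subobjects at the finite level) identifies the $A$-submodules of $A \otimes p^*Y$ with the $\tilde{f}_*\bm1$-submodules of $\tilde{f}_*\bm1 \otimes Y$ in $\Ind\clg{A}$. Now $\tilde{f}_*\bm1$ is the regular representation of $\scr{G}(\clg{A})$ (\nref{Rmk.}{rmk:regrep}) and the $\scr{G}(\clg{A})$-coaction on $Y$ produces a trivialisation $\tilde{f}_*\bm1 \otimes Y \simeq \tilde{f}_*\bm1 \otimes \tilde{f}^*Y$ as $\tilde{f}_*\bm1$-modules, so that sub-$\tilde{f}_*\bm1$-modules correspond to sub-vector-spaces of the finite-dimensional vector space $\tilde{f}^*Y$. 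This lattice is finite, hence trivially Noetherian.

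The main obstacle is the base case: the precise identification between $A$-submodules of $p^*(\tilde{f}_*\bm1 \otimes Y)$ in $\Ind\clg{B}$ and $\tilde{f}_*\bm1$-submodules of $\tilde{f}_*\bm1 \otimes Y$ in $\Ind\clg{A}$ requires that $p^*$, as a monoidal and fully faithful functor whose image is closed under subobjects, transports module structures faithfully in both directions, and this must be handled carefully in the ind-Tannakian setting. Once this, together with the standard Hopf-module trivialisation, is in place, the assembly of the proof via unipotent induction and the extension-stability of the ACC is routine.
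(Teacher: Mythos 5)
Your proposal follows the paper's proof essentially step for step: reduce to finitely generated free modules, induct on the $p^*$-unipotency index of the generator using exactness of $A\otimes(\blank)$ and the stability of the ascending chain condition under extensions, and in the base case use the projection formula together with the full faithfulness and subobject-closure of $p^*$ to pass from sub-$A$-modules of $A\otimes p^*Y \simeq p^*(\tilde{f}_*\bm1\otimes Y)$ to sub-$\tilde{f}_*\bm1$-modules of $\tilde{f}_*\bm1\otimes Y$ in $\Ind\clg{A}$. The only place the write-up goes astray is the end of the base case. The trivialisation should read $\tilde{f}_*\bm1\otimes Y\simeq \tilde{f}_*\bm1\otimes\tilde{s}^*\tilde{f}^*Y$ (\nref{Lemma}{lem:tensor-with-reg-rep}); the object $\tilde{f}^*Y$ lives in $\clg{C}$, and in the generality in which the lemma is applied (e.g.\ $(\clg{A},\clg{B},\clg{C})=(\widetilde{\clg{M}},\clg{M},\clg{E})$) the category $\clg{C}$ is an arbitrary neutral Tannakian category, not $\VecOp$, so ``sub-vector-spaces of the finite-dimensional vector space $\tilde{f}^*Y$'' is not the right conclusion. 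Moreover, trivialising the underlying object does not by itself identify submodules with subobjects of $\tilde{f}^*Y$: that identification is exactly the fundamental theorem of Hopf modules (equivalently, $\Mod(\tilde{f}_*\bm1)\simeq\Ind\clg{C}$), which is the input the paper cites; you allude to it in your closing paragraph, but in the body it is asserted rather than invoked. Finally, the lattice of subspaces of a finite-dimensional vector space is not finite; what is actually needed, and what the paper uses, is that $\clg{C}$ is Noetherian because all its objects have finite length (it is Tannakian). With these repairs --- submodules of $\tilde{f}_*\bm1\otimes Y$ correspond to subobjects of $\tilde{f}^*Y$ in $\clg{C}$, and these satisfy the ascending chain condition --- your argument coincides with the paper's.
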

\begin{proof}
Since a quotient of a Noetherian object is Noetherian,
it's enough to consider finitely generated free modules $A\otimes Y$.
We proceed by induction on the $p^*$-unipotency index of $Y$.

\emph{Base case:}
We have that
$A\otimes p^*X \simeq p^*(\tilde{f}_*\bm1 \otimes X)$,
so we reduce to studying sub-$\tilde{f}_*\bm1$-modules of $\tilde{f}_*\bm1 \otimes X$,
by the assumption on the essential image of $p^*$.
But $\Mod(\tilde{f}_*\bm1)$ is equivalent to $\clg{C}$
(e.g., by the fundamental theorem of Hopf modules and Tannakian duality),
and $\clg{C}$ is Noetherian (since it's Tannakian).

\emph{Induction step:}
We assume that $Y$ is an extension
of $p^*X$ by $Y'$
such that $A\otimes Y'$ is a Noetherian $A$-module.
By the exactness of $A\otimes(\blank)$,
we get $A\otimes Y$ as an extension of $A\otimes p^*A$ by $A\otimes Y'$.
A submodule $N$ of $A\otimes Y$ gives rise to
a submodule $N'$ of $A\otimes Y'$
by intersecting with $A\otimes Y'$ (pullback),
and to a submodule $N''$ of $A\otimes p^*X$ by taking the quotient by $N'$.
If a larger submodule $\tilde N\supseteq N$ gives rise, in this way,
to the same $N'$ and $N''$,
then it must coincide with $N$.
Since $A\otimes Y'$ and $A\otimes p^*X$ are Noetherian,
we are done.
\end{proof}

\begin{lem}\label{lem:fg-abelian}
Assume that all finitely generated free $A$-modules are Noetherian.
Then, $\Modfg(A)$ is closed under taking subobjects in $\Mod(A)$.
In particular, $\Modfg(A)$ is abelian.
\end{lem}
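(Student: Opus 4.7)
The plan is to reduce the ``abelian'' statement to the ``closed under subobjects'' statement, and then to prove the latter by showing that any submodule of a finitely generated free $A$-module is itself finitely generated, using the Noetherian hypothesis.

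Concretely, suppose $M \in \Modfg(A)$ and fix a surjection $A \otimes X \twoheadrightarrow M$ with $X \in \clg{B}$. Given a submodule $N \subseteq M$ in $\Mod(A)$, let $\tilde N \subseteq A \otimes X$ denote its preimage. Then $\tilde N \twoheadrightarrow N$, so it suffices to show $\tilde N$ is finitely generated. Here I would argue as follows. Since $\tilde N$ is an object of $\Ind\clg{B}$, it can be written as a filtered colimit $\colim_i Y_i$ with $Y_i \in \clg{B}$. Each map $Y_i \to \tilde N$, composed with the $A$-action, yields an $A$-module map $A \otimes Y_i \to \tilde N$; let $M_i$ be its image. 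Then $M_i$ is a finitely generated $A$-submodule of $\tilde N$, the family $\{M_i\}$ is directed with respect to inclusion, and $\colim_i M_i = \tilde N$ because each generator $Y_i$ already factors through $M_i$.

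At this point I would invoke the Noetherian hypothesis on $A \otimes X$: any directed family of submodules of a Noetherian object has a maximal element (one can always convert a directed family into an ascending chain, which must stabilise). Hence $\tilde N = M_{i_0}$ for some index $i_0$, and in particular $\tilde N$, and therefore $N$, is finitely generated. The main potential obstacle here is the passage from an ascending chain condition to the stabilisation of an arbitrary directed family of subobjects; but this reduction is purely formal and causes no trouble in our setting, since subobjects of $A \otimes X$ form a poset with ACC.

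For the ``in particular'' statement, once we know $\Modfg(A)$ is closed under subobjects in the abelian category $\Mod(A)$, abelianness is automatic: given a morphism $f \colon M \to N$ in $\Modfg(A)$, the kernel $\ker f \subseteq M$ lies in $\Modfg(A)$ by what we just proved, while the image $\im f$ and cokernel $\coker f$ are quotients of $M$ and $N$ respectively and so lie in $\Modfg(A)$ by construction. Thus $\Modfg(A)$ inherits kernels and cokernels from $\Mod(A)$, and the canonical map from coimage to image is an isomorphism there, giving the abelian structure.
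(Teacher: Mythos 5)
Your proof is correct and follows essentially the same route as the paper: reduce to a submodule $N$ of a finitely generated free module $A\otimes X$, exhaust $N$ by the directed family of finitely generated images of $A\otimes Y_i \to N$ coming from a presentation of its underlying ind-object, and use Noetherianity of $A\otimes X$ to find a maximal, hence equal, member of that family. The only (immaterial) difference is bookkeeping: the paper presents $N$ as a quotient of $A\otimes\tilde Y$ for an ind-object $\tilde Y$ and tracks the kernels $K_\alpha$, whereas you work directly with images $M_i$, and you spell out the formal ``in particular abelian'' step which the paper leaves implicit.
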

\begin{proof}
Let $M$ be an $A$-module which is finitely generated by $Y\in \clg{B}$,
and let $N$ be a submodule of $M$.
Then $N$ is a quotient of a submodule $N'$ of $A\otimes Y$
(namely the pullback of $N$ in $A\otimes Y$),
and we can thus replace $M$ by $A\otimes Y$,
without loss of generality.

We now show that for every $Y\in\clg{B}$,
every sub-$A$-module of $A\otimes Y$ is finitely generated.
Let $N$ be such a submodule, generated by $\tilde Y$ in $\Ind\clg{B}$,
i.e., we have a map
$A\otimes\tilde{Y}\twoheadrightarrow N$.
Let the kernel of this map be denoted by $K$,
and let $(\tilde Y_\alpha)_{\alpha\in I}$ be a filtered system of $\tilde Y_\alpha\in\clg{B}$
such that $\tilde{Y} = \colim_\alpha \tilde Y_\alpha$.
We may assume that the $\tilde Y_\alpha$ are subobjects of $\tilde{Y}$
and that all transition maps are monomorphisms~\cite[Lemme~4.2.1]{deligne-P1-3pts}.
Note that $A\otimes \tilde{Y}$ is then isomorphic to $\colim_\alpha A\otimes \tilde Y_\alpha$,
a filtered colimit of sub-$A$-modules,
by the exactness of $A\otimes(\blank)$.

Let $K_\alpha$ be the intersection of $K$ and $A\otimes \tilde Y_\alpha$ in $A\otimes\tilde{Y}$,
and let $N_\alpha$ be the cokernel of $K_\alpha\to A\otimes \tilde Y_\alpha$.
We have short exact sequences
\begin{align*}
0\to K_\alpha\to A\otimes \tilde Y_\alpha\to N_\alpha\to 0,
\end{align*}
and
since colimits commute with colimits,
we get that $\colim_\alpha N_\alpha$ is the cokernel of $K\to A\otimes\tilde{Y}$,
i.e., that it's $N$.
The Noetherianity of $A\otimes Y$
implies that the partially ordered system $N_\alpha$
of subobjects of $N$ has a maximal element $N_\beta$ (since they are also subobjects of $A\otimes Y$).
The fact that it's filtered implies that there is a unique such maximal subobject,
and it must be equal to all of $N$,
since the colimit equals $N$.
All in all, we get
\begin{align*}
N = N_\beta = (A\otimes \tilde Y_\beta)/K_\beta,
\end{align*}
so that $N$ is finitely generated by $\tilde Y_\beta$,
and we're done.
\end{proof}

\begin{lem}\label{lem:special-case-proj}
We have that $A\otimes p^*X \simeq e^*\tilde{f}^*X$.
\end{lem}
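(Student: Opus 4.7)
The plan is to unfold the definitions and reduce the claim to the projection formula already recorded in \nref{Lemma}{lem:tensor-with-reg-rep}. Using the definitions $e^*(\blank) = A \otimes s^*(\blank)$, $s^* = p^*\tilde{s}^*$, and $A = p^*\tilde{f}_*\bm 1$, together with the monoidality of $p^*$, I would first write
\[
e^*\tilde{f}^*X = A \otimes s^*\tilde{f}^*X = p^*\tilde{f}_*\bm 1 \otimes p^*\tilde{s}^*\tilde{f}^*X \simeq p^*\bigl(\tilde{f}_*\bm 1 \otimes \tilde{s}^*\tilde{f}^*X\bigr),
\]
and similarly $A \otimes p^*X \simeq p^*(\tilde{f}_*\bm 1 \otimes X)$. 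Thus it will be enough to produce a natural isomorphism $\tilde{f}_*\bm 1 \otimes X \simeq \tilde{f}_*\bm 1 \otimes \tilde{s}^*\tilde{f}^*X$ in $\Ind\clg{A}$.

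For this I would appeal to \nref{Lemma}{lem:tensor-with-reg-rep} on the group side. Under Tannaka duality, the hypothesis that $\tilde{s}^*$ is a monoidal 2-section of $\tilde{f}^*$ translates into the statement that the induced homomorphism $\tilde{s}$ is a retraction of $\tilde{f}$, so the lemma applies with its ``$p$'' taken to be $\tilde{f}$ and its ``$r$'' taken to be $\tilde{s}$, and delivers exactly the isomorphism asked for. No substantial obstacle is expected here: once the definitions are unwound, the argument is purely formal, and the only thing to keep track of is the reversal of arrow direction under Tannaka duality.
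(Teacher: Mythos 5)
Your proposal is correct and follows essentially the same route as the paper: unfold $e^*$, $s^*$, and $A$, use monoidality of $p^*$ (the projection formula) to rewrite both sides as $p^*$ of a tensor product, and conclude via \nref{Lemma}{lem:tensor-with-reg-rep} applied to $\tilde{f}$ with retraction $\tilde{s}$. The only difference is that you spell out the Tannaka-duality translation of the hypotheses explicitly, which the paper leaves implicit.
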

\begin{proof}
This follows from the projection formula,
since
\begin{align*}
A\otimes p^*X = p^*\tilde{f}_*\bm1 \otimes p^*X
  \simeq p^*(\tilde{f}_*\bm1\otimes X),
\end{align*}
and then, by \nref{Lemma}{lem:tensor-with-reg-rep},
\begin{align*}
A\otimes p^*X \simeq p^*(\tilde{f}_*\bm1\otimes \tilde{s}^*\tilde{f}^*X)
  \simeq A\otimes s^*\tilde{f}^*X
  \simeq e^*\tilde{f}^*X.
\end{align*}
\end{proof}

\begin{lem}\label{lem:module-base-case}
Assume that $\clg{B}$ is $p^*$-unipotent,
that $p^*$ is fully faithful,
and that the essential image of $p^*$ is closed under taking subobjects.
Let $M$ be a quotient of $e^*Z$ in $\Modfg(A)$.
Then $M$ is in the essential image of $e^*$.
\end{lem}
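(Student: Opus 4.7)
The plan is to use the projection formula to place $e^*Z$ in the essential image of $p^*$, then to exploit the hypotheses on $p^*$ to descend the whole situation to $\Ind\clg{A}$, and finally to invoke a Tannakian incarnation of the fundamental theorem of Hopf modules.

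First, by the projection formula (\nref{Prop.}{prop:proj}) applied to $\tilde{f}^*$ together with the identity $\tilde{f}^*\tilde{s}^* \simeq \id$, one obtains $\tilde{f}_*\bm{1} \otimes \tilde{s}^*Z \simeq \tilde{f}_*(\tilde{f}^*\tilde{s}^*Z) \simeq \tilde{f}_*Z$ in $\Ind\clg{A}$. Since $s^* = p^*\tilde{s}^*$ and $A = p^*\tilde{f}_*\bm{1}$, this yields $e^*Z = A \otimes s^*Z \simeq p^*\tilde{f}_*Z$, so in particular $e^*Z$ lies in the essential image of $p^*$. The essential image of $p^*$ is closed under subobjects by assumption, and hence also under quotients, since $p^*$ is monoidal and $\clg{B}$ is rigid (a quotient is the dual of a subobject of the dual). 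Therefore $M$ itself lies in $\im p^*$, say $M \simeq p^*\bar{M}$ for some $\bar{M} \in \Ind\clg{A}$.

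Second, I would descend the module structure. By the full faithfulness and monoidality of $p^*$, the $A$-action on $M = p^*\bar{M}$ corresponds uniquely to an $\tilde{f}_*\bm{1}$-action on $\bar{M}$ in $\Ind\clg{A}$, and the given surjection $e^*Z \twoheadrightarrow M$ is the image under $p^*$ of a surjection $\tilde{f}_*Z \twoheadrightarrow \bar{M}$ of finitely generated $\tilde{f}_*\bm{1}$-modules in $\Ind\clg{A}$. It now suffices to produce a $Y \in \Ind\clg{C}$ with $\bar{M} \simeq \tilde{f}_*Y$, for then $M \simeq p^*\tilde{f}_*Y \simeq e^*Y$ is in the essential image of $e^*$.

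This last step is the crux, and amounts to a Tannakian version of the fundamental theorem of Hopf modules. Dually, the monoidal section $\tilde{s}^*$ of $\tilde{f}^*$ presents $\scr{G}(\clg{A})$ as a semi-direct product whose normal factor has $\tilde{f}_*\bm{1}$ as its regular representation, and one expects the functor $\tilde{f}_*\colon \Ind\clg{C} \to \Modfg(\tilde{f}_*\bm{1}, \Ind\clg{A})$ to be an equivalence, with quasi-inverse obtained by tensoring down against the counit $\tilde{f}^*\tilde{f}_*\bm{1} \to \bm{1}$. The main obstacle is to verify this equivalence directly in Tannakian language without appealing to geometric machinery. I would expect to produce the desired $Y$ essentially as $w^*M$ and to use the explicit surjection $\tilde{f}_*Z \twoheadrightarrow \bar{M}$ together with the monoidal section $\tilde{s}^*$ to construct a natural map $e^*w^*M \to M$ and check by hand that it is an isomorphism.
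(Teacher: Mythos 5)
Your proposal is correct in outline, but it takes a genuinely different route from the paper. The paper never leaves $\Ind\clg{B}$: it first replaces $Z$ by $Z/Z_0$ so that no non-zero subobject of $Z$ is killed by $e^*Z \to M$, uses \nref{Lemma}{lem:fg-noetherian} and \nref{Lemma}{lem:fg-abelian} to see that the kernel of $e^*Z \to M$ is finitely generated, say by $Y$, and then runs unipotent induction: if $Y\neq 0$ it contains some $p^*X$, which via \nref{Lemma}{lem:special-case-proj} gives a map $e^*\tilde{f}^*X \to e^*Z$ killed by $e^*Z \to M$; by fullness of $e^*$ and the maximality of $Z_0$ this map vanishes, and iterating forces the kernel to be zero. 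You instead descend the whole situation to $\Ind\clg{A}$: the projection formula gives $e^*Z \simeq p^*\tilde{f}_*Z$, closure of $\im(p^*)$ under subquotients plus full faithfulness gives $M \simeq p^*\bar{M}$ together with its module structure, and the fundamental theorem of Hopf modules (equivariant descent for $\scr{G}(\clg{A})$ acting on $\scr{G}(\clg{A})/\scr{G}(\clg{C})$) identifies $\tilde{f}_*\bm1$-modules in $\Ind\clg{A}$ with $\Ind\clg{C}$, whence $\bar{M}\simeq \tilde{f}_*Y$ and $M\simeq e^*Y$. The equivalence you flag as the ``main obstacle'' is a standard fact and is precisely what the paper itself invokes in the proof of \nref{Lemma}{lem:fg-noetherian}, so relying on it is legitimate; your instinct that the quasi-inverse is essentially $w^*$ is also right. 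Notably, your argument never uses the $p^*$-unipotency of $\clg{B}$, so it proves a stronger statement, while the paper's induction is more elementary and self-contained, needing only Noetherianity and the fullness of $e^*$.

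Three points should be tightened. First, closure of $\im(p^*)$ under subobjects and quotients is a statement about objects of $\clg{B}$, whereas $M$ is an ind-object: you need the routine colimit argument writing $M$ as the filtered colimit of the images of the $p^*X_i \to M$ (these images are quotients of objects of $\im(p^*)$, hence in $\im(p^*)$ --- here you do not even need rigidity: the kernel of an epimorphism $p^*X \to Q$ is some $p^*X'$ by closure under subobjects and full faithfulness, and exactness gives $Q\simeq p^*(X/X')$), with transition maps descended by full faithfulness. Second, you should check that the isomorphism $e^*Z \simeq p^*\tilde{f}_*Z$ is one of $A$-modules, so that the surjection you descend to $\Ind\clg{A}$ is a morphism of $\tilde{f}_*\bm1$-modules. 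Third, the lemma asserts that $M\simeq e^*Y$ with $Y$ an object of $\clg{C}$, not merely of $\Ind\clg{C}$; this follows because the (exact) equivalence turns $\tilde{f}_*Z \twoheadrightarrow \bar{M}$ into an epimorphism $Z\twoheadrightarrow Y$ in $\Ind\clg{C}$, and a quotient of the compact object $Z$ is compact.
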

\begin{proof}
First, replace $Z$ by $Z/Z_0$,
where $Z_0$ is the largest subobject of $Z$ such that $e^*Z_0 \to M$ is zero.
By Lemmas~\ref{lem:fg-noetherian} and~\ref{lem:fg-abelian},
the kernel $K$ of $e^*Z \to M$ is finitely generated,
i.e., it's the quotient of some $A\otimes Y$.
If $Y$ is non-zero,
the $p^*$-unipotency of $\clg{B}$ implies that $Y$ has a non-zero subobject
of the form $p^*X$.
But $A\otimes p^*X \simeq e^*\tilde{f}^*X =: e^*Z'$,
by \nref{Lemma}{lem:special-case-proj}.
We thus have a map $e^*Z' \to e^*Z$,
and its composition with $e^*Z \to M$ is zero.
By the fullness of $e^*$,
we have an underlying map $Z' \to Z$,
and it must be zero,
because of the simplification of $Z$ we started with.
Therefore,
we may replace $Y$ by $Y/e^*Z$,
and in the end assume $Y$ is zero,
so that $K$ is zero and $e^*Z \simeq M$.
\end{proof}

\begin{lem}\label{lem:endos}
$\End_{\Mod(A)}(A) \simeq F$.
\end{lem}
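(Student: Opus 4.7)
The plan is to exhibit a chain of natural isomorphisms
\[
\End_{\Mod(A)}(A) \;\simeq\; \Hom_{\Ind\clg{B}}(\bm1_\clg{B}, A) \;\simeq\; \Hom_{\Ind\clg{A}}(\bm1_\clg{A}, \tilde f_*\bm1) \;\simeq\; \End_\clg{C}(\bm1_\clg{C}) \;=\; F,
\]
each step of which is essentially forced by one of the hypotheses at hand.

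First I would prove the identification $\End_{\Mod(A)}(A) \simeq \Hom_{\Ind\clg{B}}(\bm1_\clg{B}, A)$ by the standard free-module adjunction: an $A$-linear endomorphism $\varphi \colon A \to A$ is uniquely determined by the composition $\bm1_\clg{B} \xrightarrow{u} A \xrightarrow{\varphi} A$, where $u$ is the unit of the algebra $A$, with inverse sending $x \colon \bm1_\clg{B} \to A$ to the $A$-action twisted by $x$. This is formal and uses nothing more than that $A$ is a unital algebra in $\Ind\clg{B}$.

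Next, I would rewrite $\bm1_\clg{B} = p^*\bm1_\clg{A}$ (which holds because $p^*$ is monoidal) and $A = p^*\tilde f_*\bm1$ by definition, so that
\[
\Hom_{\Ind\clg{B}}(\bm1_\clg{B}, A) = \Hom_{\Ind\clg{B}}(p^*\bm1_\clg{A},\, p^*\tilde f_*\bm1).
\]
Since $p^*$ is fully faithful on $\clg{A}$ by hypothesis, the induced functor $p^*\colon \Ind\clg{A} \to \Ind\clg{B}$ is fully faithful as well (this is a standard fact about ind-completions, and moreover is compatible with the $(p^*, p_*)$-adjunction we have been using). Hence the displayed Hom-set coincides with $\Hom_{\Ind\clg{A}}(\bm1_\clg{A}, \tilde f_*\bm1)$.

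Finally, I would apply the $(\tilde f^*, \tilde f_*)$-adjunction (in ind-categories) together with $\tilde f^*\bm1_\clg{A} = \bm1_\clg{C}$ to get $\Hom_\clg{C}(\bm1_\clg{C}, \bm1_\clg{C})$, which is $F$ because $\clg{C}$ is a neutral Tannakian category over $F$. The only genuine subtlety is the extension of full faithfulness from $\clg{A}$ to $\Ind\clg{A}$ in the second step, but this is standard, so there is no real obstacle; the proof is essentially a bookkeeping exercise about which adjunction is being used where.
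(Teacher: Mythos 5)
Your proof is correct and uses exactly the same ingredients as the paper's: the free-module adjunction, the full faithfulness of $p^*$ (extended to ind-objects), and the $(\tilde f^*,\tilde f_*)$-adjunction together with $\End_{\clg{C}}(\bm1)=F$; the only difference is that you apply the free-module adjunction in $\Ind\clg{B}$ before invoking full faithfulness, whereas the paper first transports the computation to $\Mod(\tilde f_*\bm1)$ in $\Ind\clg{A}$ and then applies the same adjunctions there. This reordering is cosmetic, so the argument matches the paper's proof.
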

\begin{proof}
Since $p^*$ is fully faithful, we have
$\End_{\Mod(A)}(A) \simeq \End_{\Mod(\tilde f_*\bm1)}(\tilde f_*\bm1)$,
and the latter is $F$:
\begin{align*}
\Hom_{\Mod(\tilde{f}_*\bm1)}(\tilde{f}_*\bm1\otimes\bm1, \tilde{f}_*\bm1)
  \simeq \Hom_{\Ind\clg{A}}(\bm1, \tilde{f}_*\bm1)
  \simeq \Hom_{\clg{C}}(\bm1,\bm1)
  \simeq F,
\end{align*}
under the adjunction $\tilde{f}_*\bm1\otimes(\blank)\colon \Ind\clg{A} \rightleftarrows \Mod(\tilde{f}_*\bm1) \,:\!\rom{Forgetful}$.
\end{proof}

We're now ready to prove the main result of this section.
\begin{proof}[Proof of \nref{Proposition}{prop:module}]
(i)
Take $N$ in $\Modfg(A)$,
a quotient of $A\otimes Y$ for some $Y$ in $\clg{B}$.
We want to establish that $N$ is
an extension of $e^*Z$ by something $e^*$-unipotent,
for some $Z\in\clg{C}$.
This is done by induction on the $p^*$-unipotency index of $Y$.

\emph{Base case:}
Assume $Y=p^*X$.
Then
$A\otimes Y = e^*(\tilde{f}^*X)$,
by \nref{Lemma}{lem:special-case-proj}.
Then \nref{Lemma}{lem:module-base-case}, says that $N$ must be of the form $A\otimes s^*(Z) = e^*(Z)$,
for some $Z\in\clg{C}$.
In particular, it's $e^*$-unipotent.

\emph{Induction step:}
We write $Y$ as an extension
\begin{align*}
0\to Y'\to Y\to p^*X \to 0,
\end{align*}
where $Y'$ is of a lower $p^*$-unipotency index.
The induction hypothesis tells us that any quotient of $A\otimes Y'$ in $\Modfg(A)$ is $e^*$-unipotent.
Thus,
\begin{equation*}\begin{tikzcd}
0 \ar[r] & A\otimes Y' \ar[d, dotted, two heads] \ar[r]
  & A\otimes Y \ar[d, two heads] \ar[r]
  & A\otimes p^*(X) \ar[d, dotted, two heads] \ar[r] & 0 \\
0 \ar[r] & U \ar[r] & N \ar[r] & e^*(Z) \ar[r] & 0
\end{tikzcd}\end{equation*}
where $U$ is $e^*$-unipotent, and we're done.

(ii)
With Lemmas~\ref{lem:fg-abelian} and~\ref{lem:endos} in mind,
it's enough to show that
(a) $\Modfg(A)$ is rigid,
and (b) $w^*$ is faithful and exact.
The existence of a fibre functor then follows from (b) and the neutrality of $\clg{C}$.
(See \nref{Section}{sec:tannaka}.)

(a)
Rigidity is stable under extension,
and everything in the image of $e^*$ is rigid,
since $\clg{C}$ is rigid and $e^*$ is monoidal.

(b)
Clearly, $w^*$ is right exact.
By \nref{Lemma}{lem:deligne-cor-2.10},
it is therefore exact and faithful.
\end{proof}

\subsection{Proof of Thm.~\ref{thm:M}}
We now go back to the situation described in the introduction to \nref{Section}{sec:thmM}.

\subsubsection{Applying the unipotent theorem}
We write $\hat\varphi^* := \varphi^*\hat\varrho^*\colon\mlc\to\VecOp$.
We also write $\hat\sigma^*$ for a section to $\hat\varphi^*$
such that $\hat\varrho^*\hat\sigma^*=\sigma^*$.

Apply \nref{Prop.}{prop:module} to $(\Rep S, \mlc, \VecOp)$ as $(\clg{A},\clg{B},\clg{C})$,
and write $T$ for $q_2^*\tilde{g}_*\bm1$ (playing the role of the algebra $A$).
The unipotency assumption needed follows from
the pro-unipotency of the kernel of $\clg{K} \to S$
and the surjectivity of $\bar{\varrho}$
(by \nref{Lemma}{lem:unip_ker}).
The proposition then gives us a unipotent Tannakian category
\begin{equation*}\begin{tikzcd}
\Modfg(T) \ar[d, "\omega^*"'] \\
\VecOp \ar[u, bend right, "\varepsilon^*"']
\end{tikzcd}\end{equation*}
with a forgetful functor to $\Ind\mlc$.

\begin{rmk}
In fact,
$\Mod^\rom{fg}(T)$
is nothing but the category
of representations of the (pro-unipotent)
kernel of $\clg{K} \twoheadrightarrow S$.
\end{rmk}

It is not hard to see that $\clg{M}$ is unipotent over $\widetilde{\clg{M}}$.
In fact, $\scr{G}(\widetilde{\clg{M}})$ is the pushout of $G \leftarrow K \to S$,
so that $G \to \scr{G}(\widetilde{\clg{M}})$ is surjective
with the same pro-unipotent kernel as $K \to S$.
We can thus apply \nref{Prop.}{prop:module} to $(\widetilde{\clg{M}},\clg{M},\clg{E})$
as $(\clg{A},\clg{B},\clg{C})$,
with $M := p^*\tilde{f}_*\bm1$ playing the role of $A$.
We get a relatively unipotent Tannakian category
\begin{equation*}\begin{tikzcd}
\Modfg(M) \ar[d, "w^*"'] \\
\clg{E} \ar[u, bend right, "e^*"']
\end{tikzcd}\end{equation*}
over $\clg{E}$, which has a forgetful functor to $\Ind\clg{M}$.

\begin{rmk}
In fact,
$\Mod^\rom{fg}(M)$
is nothing but $\Rep(\ker(K \to S) \ltimes H)$.
\end{rmk}

The composition of $\iota^*$ and $\phi^*$ induces a functor
$\Modfg(M)\to\Modfg(T)$,
since
\begin{align*}
\phi^*\iota^*M = \phi^*\iota^*p^*\tilde{f}_*\bm1
  \simeq q_2^*\tilde\iota^*\tilde{f}_*\bm1
  \simeq T.
\end{align*}
Here, we've used the fact that $\tilde\iota^*\tilde{f}_*\bm \simeq \tilde{g}_*\bm1$,
which is true by definition.
To summarize the situation,
we have a diagram
\begin{equation}\begin{tikzcd}\label{eq:module_unip_situation}
\Modfg(M) \ar[d,"w^*"'] \ar[r, "\phi^*\iota^*"]
  &[2em] \Modfg(T) \ar[d, "\omega^*"'] \\
\clg{E} \ar[u, bend right, "e^*"'] \ar[r, "b^*"]
  & \VecOp \ar[u, bend right, "\varepsilon^*"']
\end{tikzcd}\end{equation}
in $\nTan$,
where $\Modfg(M)$ is relatively unipotent over $\clg{E}$ and $\Modfg(T)$ is unipotent.
In order to apply the unipotent theorem,
we need the following result:
\begin{lem}
If
$b^*\circ R^is_* \simeq R^i\sigma_*\circ B^*$ for $i\leq 1$,
then
$b^*\circ R^ie_* \simeq R^i\varepsilon_* \circ(\phi^*\iota^*)$ for $i\leq 1$.
\end{lem}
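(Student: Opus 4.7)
The plan is to identify the ind-right adjoints $e_*$ and $\varepsilon_*$ as composites with forgetful functors, and then to push the hypothesis through these identifications. Specifically, since $e^*$ factors as the section $s^* \colon \clg{E} \to \clg{M}$ followed by the free-module functor $M \otimes (\blank)\colon \Ind \clg{M} \to \Mod(M)$, its ind-right adjoint factors in the opposite order as $e_* = s_* \circ \rom{forget}_M$, where $\rom{forget}_M \colon \Mod(M) \to \Ind \clg{M}$ is the forgetful functor. This $\rom{forget}_M$ is exact (kernels and cokernels of $M$-module maps are computed on underlying ind-objects), and its left adjoint $M \otimes (\blank)$ is exact as well, so $\rom{forget}_M$ preserves injectives. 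Consequently $R^i e_* = R^i s_* \circ \rom{forget}_M$, and an entirely analogous decomposition yields $R^i \varepsilon_* = R^i \hat\sigma_* \circ \rom{forget}_T$, with $\hat\sigma_* \colon \Ind \mlc \to \Ind \VecOp$ the ind-right adjoint of $\hat\sigma^*$.

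The next step is to chain these identifications with the factorisation $B^* = \hat\varrho^* \circ (\phi^* \iota^*)$ through the full embedding $\hat\varrho^* \colon \mlc \hookrightarrow \clg{T}$, together with the compatibility $\rom{forget}_T \circ (\phi^* \iota^*) = (\phi^* \iota^*) \circ \rom{forget}_M$ (true by construction, since the lifted $\phi^*\iota^*$ on modules is defined by transporting the $M$-action along the iso $\phi^* \iota^* M \simeq T$). Applying the hypothesis $b^* R^i s_* \simeq R^i \sigma_* B^*$ to $\rom{forget}_M N$ then yields a natural isomorphism
\begin{align*}
b^* R^i e_* N \;\simeq\; R^i \sigma_*\bigl(\hat\varrho^*\,\rom{forget}_T\,(\phi^* \iota^* N)\bigr).
\end{align*}
The lemma therefore reduces to checking that the canonical map $R^i \hat\sigma_* \to R^i \sigma_* \circ \hat\varrho^*$ is an isomorphism of functors $\Ind \mlc \to \Ind \VecOp$ for $i \leq 1$.

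This last step is where the main effort lies and where the construction of the Malcev completion enters. Both sides compute $\Ext$-groups out of the unit: $R^i \hat\sigma_* Y = \Ext^i_{\Ind \mlc}(\bm 1, Y)$ and $R^i \sigma_*(\hat\varrho^* Y) = \Ext^i_{\Ind \clg{T}}(\bm 1, \hat\varrho^* Y)$. For $Y \in \mlc$, the $i = 0$ case is immediate from the full faithfulness of $\hat\varrho^*$, and the $i = 1$ case follows from the crucial fact that $\mlc$ is closed under extensions in $\clg{T}$ by construction (\nref{Def.}{defn:malcev_completion}): every extension of $\bm 1$ by $Y$ in $\clg{T}$ already lies in $\mlc$, so the comparison map $\Ext^1_\mlc(\bm 1, Y) \to \Ext^1_{\clg{T}}(\bm 1, \hat\varrho^* Y)$ is a bijection. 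The passage from $\mlc$ to $\Ind \mlc$ is routine, using that $\bm 1$ is compact so that $\Ext^i(\bm 1, \blank)$ commutes with filtered colimits. A final bookkeeping check that the assembled chain of natural isomorphisms coincides with the canonical comparison map of the lemma then concludes the argument.
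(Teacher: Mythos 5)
Your proposal is correct and takes essentially the same route as the paper: both decompose $e_*$ and $\varepsilon_*$ through the exact, injective-preserving forgetful functors, chain with $B^* = \hat\varrho^*\circ(\phi^*\iota^*)$ and the compatibility of $\phi^*\iota^*$ with the forgetful functors, and reduce to the comparison $R^i\hat\sigma_* \simeq R^i\sigma_*\circ\hat\varrho^*$ for $i\leq 1$, settled by full faithfulness of $\hat\varrho^*$ (equivalently, surjectivity of $\hat\varrho$, as the paper phrases it) in degree $0$ and by the extension-stability of $\mlc$ in $\clg{T}$ in degree $1$.
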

\begin{proof}
Note that $R\varepsilon_*$ is the right adjoint of $\varepsilon^*$ (on the derived level, of course).
By definition, $\varepsilon^*$ is the composition $(T\otimes\blank)\circ\hat\sigma^*$.
The (ind-)right adjoints of $(T\otimes\blank)$ and $\hat\sigma^*$ are $\rom{Forget}\colon\Modfg(T)\to\Ind\mlc$, and $\hat\sigma_*$, respectively.
The former is exact and takes injectives to injectives (since $(T\otimes\blank)$ is exact),
and we get $R\varepsilon_* = (R\hat\sigma_*)\circ\rom{Forget}$.
In the same way, $Re_* = (Rs_*)\circ\rom{Forget}'$,
where $\rom{Forget}'\colon\Modfg(M)\to\Ind\clg{M}$.

Thus for $i\leq 1$,
\begin{align*}
b^*\circ(R^ie_*)
  &= b^*\circ(R^is_*)\circ\rom{Forget}' \\
  &= (R^i\sigma_*)\circ B^*\circ \rom{Forget}' \\
  &= (R^i\sigma_*)\circ (\hat\varrho^*\phi^*\iota^*)\circ \rom{Forget}' \\
  &= (R^i\sigma_*)\circ \hat\varrho^*\circ \rom{Forget}\circ (\phi^*\iota^*),
\end{align*}
and we've reduced to showing $(R^i\sigma_*)\circ \hat\varrho^* = R^i\hat\sigma_*$.
For $i=0$,
this follows from the surjectivity of $\hat\varrho$.
For $i=1$,
it follows from the stability of $\mlc$ in $\clg{T}$ under extensions.
\end{proof}

We may thus apply
\nref{Thm.}{thm:U1}
to the diagram~\heqref{eq:module_unip_situation},
yielding the following theorem:
\begin{thm}\label{thm:MU}
Assume that
$\bar\varrho$ is surjective and that
$b^* R^is_* \to R^i\sigma_* B^*$
is an isomorphism for $i=0,1$.
Then the natural algebra morphism
$\phi^*\iota^*w_*\bm1 \to \omega_*\bm1$ in $\Modfg(T)$
is an isomorphism.
\qed
\end{thm}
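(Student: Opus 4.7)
The plan is to derive \nref{Thm.}{thm:MU} as a direct application of the unipotent isomorphism criterion \nref{Thm.}{thm:U1} to the diagram~\heqref{eq:module_unip_situation}. That diagram is itself an instance of the Standard Situation, with $\clg{E}$ now playing the role previously played by $\VecOp$: the fibre functors $w^*$ and $\omega^*$ come equipped with sections $e^*$ and $\varepsilon^*$, while the realisation $\phi^*\iota^*$ on top is compatible with $b^*$ on the bottom. Under the dictionary $(B^*, f^*, \varphi^*, s^*, \sigma^*) \leftrightarrow (\phi^*\iota^*, w^*, \omega^*, e^*, \varepsilon^*)$, the conclusion of \nref{Thm.}{thm:U1} reads exactly as the target statement that $\phi^*\iota^* w_*\bm1 \to \omega_*\bm1$ is an isomorphism of algebras.

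To invoke \nref{Thm.}{thm:U1} one must check its two hypotheses for this new instance. First, $\Modfg(T)$ must be unipotent. This is the content of \nref{Prop.}{prop:module} applied to the triple $(\Rep S, \mlc, \VecOp)$: full faithfulness of $q_2^* \colon \Rep S \hookrightarrow \mlc$ and closure of its essential image under subobjects follow from the surjectivity of $\bar\varrho$ together with \nref{Prop.}{prop:tannaka-mor-cor}, and the $q_2^*$-unipotency of $\mlc$ is built into the construction of the Malcev completion $\clg{K} = \scr{G}(\mlc)$, since $\ker(\clg{K} \to S)$ is pro-unipotent (\nref{Lemma}{lem:unip_ker}). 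Second, the cohomological comparison $b^* R^i e_* \simeq R^i \varepsilon_* \circ (\phi^*\iota^*)$ for $i = 0, 1$ is exactly the content of the lemma immediately preceding the statement of \nref{Thm.}{thm:MU}, which reduces it to the cohomological hypothesis we have assumed.

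With both hypotheses verified, \nref{Thm.}{thm:U1} delivers the desired algebra isomorphism, and the proof is complete. The main obstacle is not really located in the present theorem at all: the genuine difficulty was packaging the base extension of the Standard Situation into the module-category formalism of \nref{Section}{sec:unipotent-module-cats} and then tracing through the derived functors in the preceding lemma so as to translate the new cohomological hypothesis into the old one. Once those tools have been put in place, \nref{Thm.}{thm:MU} is a formal corollary.
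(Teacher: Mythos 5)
Your proposal is correct and is essentially the paper's own proof: \nref{Thm.}{thm:MU} is obtained there precisely by applying \nref{Thm.}{thm:U1} to the diagram~\heqref{eq:module_unip_situation}, with the unipotency of $\Modfg(T)$ supplied by \nref{Prop.}{prop:module} applied to $(\Rep S,\mlc,\VecOp)$ (using the surjectivity of $\bar\varrho$ and the pro-unipotence of $\ker(\clg{K}\to S)$) and the cohomological hypothesis transferred by the lemma immediately before the statement. The only slip is the phrase that $\clg{E}$ now plays the role previously played by $\VecOp$ --- in the new instance of the Standard Situation $\clg{E}$ and $\VecOp$ keep their respective roles, as your own dictionary $(f^*,\varphi^*,s^*,\sigma^*)\leftrightarrow(w^*,\omega^*,e^*,\varepsilon^*)$ correctly records --- and this does not affect the argument.
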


\subsubsection{The final argument}
\begin{proof}[Proof of Theorem~\ref{thm:M}]
From the previous section,
we get that
\begin{align*}
\rom{Forget}(\phi^*\iota^*w_*\bm1) \to \rom{Forget}(\omega_*\bm1)
\end{align*}
in $\Ind\mlc$ is an isomorphism of algebras (\nref{Thm.}{thm:MU}).
Note that, writing
$\rom{Forget}''\colon\Modfg(\iota^*M)\to \Ind\Rep K$,
we have
\begin{align*}
\rom{Forget}(\phi^*\iota^*w_*\bm1)
  = \phi^*\circ\rom{Forget}''\circ\iota^*w_*\bm1
  = \phi^*\iota^*\circ\rom{Forget}'\circ w_*\bm1
\end{align*}
\nref{Lemma}{lem:double-right-adjoint-alg-iso} gives us algebra isomorphisms
$f_*\bm1 \simeq \rom{Forget'}\circ w_*\bm1$
(note that $M\otimes(\blank)$ is a morphism in $\nTan$, by \nref{Lemma}{lem:deligne-cor-2.10},
and that $\rom{Forget}'$ is its ind-right adjoint),
and $\hat\varphi_*\bm1 \simeq \rom{Forget}\circ\omega_*\bm1$.
Finally, $\iota^*f_*\bm1 = g_*\bm1$, by definition.
Putting everything together
we thus get that the natural algebra morphism
\begin{align*}
\phi^*g_*\bm1 \to \hat\varphi_*\bm1
\end{align*}
is an isomorphism.
By \nref{Prop.}{prop:group-iso},
we are done.
\end{proof}
% NOTE: maybe I should say something about why it's actually the natural algebra morphism.

\newpage
\part{Fundamental groups}\label{part:applications}
\bigskip\bigskip

\section{Enriched local systems}\label{sec:enriched}
In this section,
we work out,
in an axiomatic setting,
the common arguments in our two main applications of the Malcev completeness criterion (Thm.~\ref{thm:M}).
The applications themselves are found in Section~\ref{sec:applications}.
The main results of this section are
Propositions~\ref{prop:enriched_coh_assump},~\ref{prop:enriched_surjectivity},
and~\ref{prop:generalized_hain_enriched}.

Let $k$ be a fixed subfield of $\C$.
We usually write $\pt$ for $\Spec k$.
For a $k$-variety $X$,
let $X^\rom{an}$ denote $X(\C)$ equipped with the complex analytic topology,
and let $\Loc(X)$ denote the category of
locally constant sheaves of finite-dimensional $F$-vector spaces (\emph{local systems}) on $X^\rom{an}$.
Given a $k$-point $x$ of $X$,
denote by $\pi_1(X,x)$ the (topological) fundamental group of $X^\rom{an}$.
Recall that $\Loc(X)$ is equivalent to
the category of (finite-dimensional) $F$-representations of $\pi_1(X,x)$.
Lastly,
$\SmVar_k$ is the category of
smooth, separated and geometrically connected schemes of finite type over $k$.

\begin{defn}\label{defn:enriched}
Following~\cite{arapura-hodge},
we say that a~\emph{weak theory of enriched local systems}
is
a contravariant 2-functor
\begin{align*}
\clg{E}\colon (\SmVar_k)^\rom{op} &\to \nTan         \\
  X                    &\mapsto\clg{E}(X) \\
  (f\colon Y\to X)     &\mapsto (f_\clg{E}^*\colon\clg{E}(X)\to\clg{E}(Y))
\end{align*}
together with a monoidal natural transformation $b^* = b^*_\clg{E }\colon\clg{E}(\blank)\to \Loc(\blank)$.
(Recall that the morphisms in $\nTan$ are faithful and exact functors.)
\end{defn}

For any $X$ in $\SmVar_k$ with a $k$-point $x$,
we then get an instance of the Standard Situation (\nref{Section}{sec:standard}),
and with that in mind,
we denote by $K = K(X,x)$ is the kernel of $s\colon \scr{G}(\clg{E}(X)) \to \scr{G}(\clg{E}(\pt))$.
As in Section~\ref{sec:standard},
we get a commutative diagram:
\begin{equation*}
\begin{tikzcd}
\clg{E}(X) \ar[d, "f^*" '] \ar[r, "\iota^*"] \ar[rr, bend left, "b_X^*"]
  & \Rep K \ar[d, "g^*"'] \ar[r, "\varrho^*"]
  & \Loc(X) \ar[d, "\varphi^*" '] \\
\clg{E}(\pt) \ar[u, bend right, "s^*"'] \ar[r, "b^*"'] & \VecOp \ar[u, bend right, "t^*"'] \ar[r, equals] & \VecOp \ar[u, bend right, "\sigma^*"']
\end{tikzcd}
\end{equation*}
Here, $s^*=p_\clg{E}^*$ and $\sigma^*=p_{\Loc}^*$ are induced by the structure morphism $p\colon X\to\pt$,
while $f^*=x_\clg{E}^*$ and $\varphi^*=x_{\Loc}^*$ are induced by the point $x\colon\pt\to X$.
We briefly translate the current setting to group theory,
starting with the following definitions:
\begin{defn}\label{defn:E-fund}
We denote by $\Loc_{\clg{E}}(X)$ the smallest Tannakian subcategory of $\Loc(X)$ containing the image of $b^*$.
We define the $\clg{E}$-\emph{fundamental group} of $(X,x)$ to be the Tannaka dual group
\begin{align*}
\pi_1^{\clg{E}}(X,x) := \scr{G}(\Loc_{\clg{E}}(X), x^*)
\end{align*}
of $\Loc_{\clg{E}}(X)$, with fibre functor $x^*=\varphi^*$.
The objects of $\Loc_{\clg{E}}(X)$ are local systems \emph{coming from $\clg{E}$},
or alternatively, \emph{of $\clg{E}$-origin}.
Lastly, let $G(X,x) := \scr{G}(\clg{E}(X), x^* \circ b^*)$
and $G(k) := \scr{G}(\clg{E}(\pt), b^*)$,
and let $S(X,x)$ be the maximal pro-reductive quotient of $K(X,x)$.
\end{defn}

Note that the map $\pi_1(X,x) \to G(X,x)$
induced by $b^*\colon \Rep G(X,x) \to \Rep \pi_1(X,x)$
factors through a map $\varrho \colon \pi_1(X,x) \to K(X,x)$,
as above.
By definition, $\varrho\colon \pi_1(X,x) \to K(X,x)$ factors further through
$\pi_1^{\clg{E}}(X,x)$,
and in fact the latter is the Zariski closure of the image $\im(\varrho)$ of the former
(by \nref{Lemma}{lem:im_of_res_from_im}).
In the end, we get a diagram
\begin{equation*}\begin{tikzcd}
&[-1em] S(X,x) \\[-0.5em]
1 \ar[r] & K(X,x) \ar[u, two heads] \ar[r]
  &[-1em] G(X,x) \ar[r]
  & G(k) \ar[l, bend right, start anchor = north west, end anchor = north east, yshift=-.5em] \ar[r] & 1 \\[-0.5em]
\pi_1(X,x) \ar[ur, bend right=4] \ar[r] & \pi_1^\clg{E}(X, x) \ar[u, hook]
\end{tikzcd}\end{equation*}
where the middle row is (split) exact.

Above, whenever we have a map from the discrete group $\pi_1(X,x)$
to a pro-algebraic group $G$,
we mean a map to the $F$-points $G(F)$ of $G$
(or equivalently a map from the pro-algebraic completion of $\pi_1(X,x)$ to $G$).

The goal of this section is to develop tools to show that
$\pi_1^\clg{E}(X,x) = K(X,x)$
and that
$\pi_1(X,x) \to \pi_1^\clg{E}(X,x)$
is Malcev complete,
both of which follow from simply showing that $\pi_1(X,x) \to K(X,x)$ is Malcev complete.
To show this,
we utilize the Malcev completeness criterion (Thm.~\ref{thm:M}),
which transforms our task into
checking the two assumptions of the theorem.

\subsection{Checking the cohomological assumption}
Our ultimate goal is to apply Thm.~\ref{thm:M},
and we thus need a way to establish the cohomological assumption,
namely,
that the canonical maps
$b^*R^is_* \to R^i\sigma_*B^*$, for $i=0,1$, are isomorphisms.
The idea is that a theory of enriched local systems usually lives inside some ambient theory of (perverse or constructible) sheaves,
where an analogue of the cohomological assumption holds.
A motivating example to have in mind is
the way admissible variations of mixed Hodge structures
sit in the larger category of mixed Hodge modules.

\begin{notation}\label{notation:C}
Let $\clg{D}(X)$ be the derived category $\clg{D}^\rom{b}_\rom{c}(X, F)$
of bounded complexes of sheaves of $F$-vector spaces on $X^\rom{an}$ with constructible cohomology.
\end{notation}

\begin{rmk}
Given a morphism $f\colon Y\to X$,
we write $Rf_*$ and $f^*$ for the pushforward and pullback on $\clg{D}(\blank)$.
Note that $f^*$ takes local systems to local systems,
and so restricts to a functor $f_{\Loc}^*\colon \Loc(X) \to \Loc(Y)$.
Moreover, when $f$ is such that the restriction of $R^0f_*$ to local systems lands in (ind-)local systems (e.g. when $X=\pt$),
then this restriction
gives the right adjoint $f^{\Loc}_*$ of $f_{\Loc}^*$.
\emph{However}, for $i>0$, the restriction of $R^if_*$ will generally \emph{not} agree with $R^i f^{\Loc}_*$,
even when it takes values in (ind-)local systems.
\end{rmk}

\begin{defn}\label{defn:ambient}
Let $\clg{E}$ be a weak theory of enriched local systems,
and $X\in\SmVar_k$,
with structure morphism $p\colon X \to \pt$.
An \emph{ambient theory} $\clg{A}(X)$ for $\clg{E}$ over $X$ is
a triangulated category $\clg{A}(X)$,
together with
\begin{enumerate}[(a)]
\item
a triangulated functor $\inc_\clg{A} \colon \Db(\clg{E}(X)) \to \clg{A}(X)$;
\item
a triangulated \emph{realization} functor $b^*_\clg{A} \colon \clg{A}(X)\to \clg{D}(X)$;
\item\label{enum:ambient_realization_iso}
an isomorphism $b^*_\clg{A} \circ \inc_\clg{A} \simeq \inc \circ\, b^*_\clg{E}$
(where $b^*_\clg{E}$ is viewed as a functor
$\Db(\clg{E}(X)) \to \Db(\Loc(X))$).
\end{enumerate}
Let $p^*_\clg{A} := \inc_\clg{A} \circ\, p^*_\clg{E}$,
where $p^*_\clg{E}$ is viewed as a functor
$\Db(\clg{E}(\pt)) \to \Db(\clg{E}(X))$.
We moreover require the following conditions to hold:
\begin{enumerate}[(i)]
\item\label{enum:ambient_adjoint}
the pullback functor $p^*_\clg{A}$ has a right adjoint
$p_*^\clg{A} \colon \clg{A}(X) \to \Db(\clg{E}(\pt))$;
\item\label{enum:ff_from_E_to_A}
the composition $\clg{E}(X) \to \Db(\clg{E}(X)) \to \clg{A}(X)$ is fully faithful;
\item\label{enum:E_after_real}
an object $M$ in $\clg{A}(X)$ lies in the full subcategory $\clg{E}(X)$ if and only if
$b_\clg{A}^*M$ lies in $\Loc(X)$;
\item\label{enum:ambient_real_comp}
the canonical map
$b^* p_*^\clg{A} \to Rp_*b^*_\clg{A}$
is an isomorphism;
\item\label{enum:ambient_truncated}
for every $M$ in $\clg{E}(X)$,
the canonical map $p_*^\clg{A}M \to \tau^{\geq 0}p_*^\clg{A}M$
is an isomorphism.
\end{enumerate}
\end{defn}

\begin{rmk}\label{rmk:ambient_canonical_maps}
We have two natural transformations:
\begin{align*}
Rp_*^{\clg{E}} \to p_*^{\clg{A}}\circ\inc_\clg{A}, \quad
b^* \circ\, p_*^{\clg{A}} \to Rp_* \circ\, b^*_{\clg{A}}.
\end{align*}
The former corresponds under adjunction to
\begin{align*}
p^*_\clg{A} \circ Rp_*^\clg{E}
  = \inc_\clg{A} \circ\, p^*_\clg{E} \circ Rp^*_\clg{E}
  \to \inc_\clg{A}.
\end{align*}
The latter is the ``canonical map'' in~\heqref{enum:ambient_real_comp}
and corresponds under adjunction to
\begin{align*}
p^* \circ\, b^* \circ\, p_*^{\clg{A}}
  &\simeq \inc \circ\, b^*_\clg{E} \circ\, p^*_\clg{E} \circ\, p_*^{\clg{A}} \\
&\xrightarrow{\heqref{enum:ambient_realization_iso}}
  b^*_\clg{A} \circ \inc_\clg{A} \circ\, p^*_\clg{E} \circ\, p_*^{\clg{A}} \\
&= b^*_\clg{A} \circ p^*_\clg{A} \circ\, p_*^{\clg{A}} \\
&\to b^*_\clg{A}.
\end{align*}
\end{rmk}
\begin{rmk}
Some care has to be taken with ind-categories at this point.
We have that $Rp_*^\clg{E}$ goes from $\Db(\Ind\clg{E}(X))$ to $\Db(\Ind\clg{E})$,
whereas $p_*^\clg{A} \circ \inc_\clg{A}$ goes between $\Db(\clg{E}(X))$ and $\Db(\clg{E})$.
The latter two categories are full triangulated subcategories of the former two
(see, e.g.,~\cite[Prop.~2.2]{huber_Ind}).
In particular,
when writing the first natural transformation in the previous remark,
we are suppressing the fact that we are actually extending the right-hand side functor to the ind-categories
(ind-category formation is functorial).
This is quite harmless and doesn't affect \nref{Prop.}{prop:pushf_assump_enriched_ambient},
for example.
\end{rmk}

\begin{exa}\
\begin{enumerate}
\item
The trivial example is given by $\clg{E} := \Loc(\blank)$ and $\clg{A}(X) := \clg{D}(X)$
(see \nref{Notation}{notation:C}).
\item
Another example of interest to us is given by $\clg{E} := \MHS(\blank)$
(admissible variations of mixed Hodge structure)
and $\clg{A}(X) = \clg{D}^\rom{b}(\MHM(X))$
(the bounded derived category of mixed Hodge modules).
See \nref{Lemma}{lem:MHM_ambient}.
\end{enumerate}
\end{exa}

For the rest of the section, we fix a weak theory $\clg{E}$ of enriched local systems,
$X\in\SmVar_k$,
and an ambient theory $\clg{A}(X)$ for $\clg{E}$ over $X$.

\begin{lem}\label{lem:E_extn_in_A}
The full subcategory $\clg{E}(X)$ is closed under extensions in $\clg{A}(X)$.
\end{lem}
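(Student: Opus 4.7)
The plan is to invoke condition~\ref{enum:E_after_real} of \nref{Def.}{defn:ambient}, which characterises membership of $\clg{E}(X)$ inside $\clg{A}(X)$ via the realisation: an object $M \in \clg{A}(X)$ lies in $\clg{E}(X)$ if and only if $b^*_\clg{A} M$ lies in $\Loc(X) \subseteq \clg{D}(X)$. So, given a distinguished triangle
\begin{align*}
M' \to M \to M'' \to M'[1]
\end{align*}
in $\clg{A}(X)$ with $M', M'' \in \clg{E}(X)$, it is enough to check that $b^*_\clg{A} M$ is (quasi-isomorphic to) a local system concentrated in degree zero.

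First I would apply the triangulated functor $b^*_\clg{A}$ and use the natural isomorphism $b^*_\clg{A}\circ \inc_\clg{A} \simeq \inc \circ\, b^*_\clg{E}$ of condition~(c) to identify $b^*_\clg{A}M'$ and $b^*_\clg{A}M''$ with the local systems $b^*_\clg{E}M'$ and $b^*_\clg{E}M''$ sitting in cohomological degree zero. Extracting the long exact sequence of cohomology sheaves from the resulting triangle in $\clg{D}(X)$ immediately gives $\mathcal{H}^i(b^*_\clg{A} M) = 0$ for $i \neq 0$, together with a short exact sequence of constructible sheaves on $X^\rom{an}$:
\begin{align*}
0 \to b^*_\clg{E} M' \to \mathcal{H}^0(b^*_\clg{A} M) \to b^*_\clg{E} M'' \to 0.
\end{align*}

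Since $X^\rom{an}$ is locally path connected and locally simply connected, sheaf-theoretic extensions of a local system by a local system are again local systems (most concretely, via the equivalence between local systems and $\pi_1(X^\rom{an})$-representations). Hence $\mathcal{H}^0(b^*_\clg{A} M) \in \Loc(X)$, and therefore $b^*_\clg{A} M$ lies in $\Loc(X) \subseteq \clg{D}(X)$. Condition~\ref{enum:E_after_real} then forces $M$ into $\clg{E}(X)$, as required. I do not foresee any substantive obstacle: the argument is a straightforward long-exact-sequence computation paired with the two characterising properties of the ambient theory.
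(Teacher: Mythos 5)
Your argument is correct and is essentially the paper's own proof: the paper likewise applies the realisation, observes that $b^*_\clg{A}E$ is then an extension of local systems in $\clg{D}(X)$ and hence a local system, and concludes by condition~\heqref{enum:E_after_real}; you have merely spelled out the long-exact-sequence and local-splitting details that the paper leaves implicit. No gap.
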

\begin{proof}
Let $E$ be an extension in $\clg{A}(X)$ of
$M$ and $N$, both lying in the full subcategory $\clg{E}(X)$
(see \ref{defn:ambient}\heqref{enum:ff_from_E_to_A}).
Then $b^*E$ is an extension in $\clg{D}(X)$ of local systems $b^*M$ and $b^*N$,
so $b^*E$ is a local system.
We conclude using~\heqref{enum:E_after_real} in Def.~\ref{defn:ambient}.
\end{proof}

\begin{lem}\label{lem:ambient_ext_group_iso}
Given $M$ and $N$ in $\clg{E}(X)$,
the map
\begin{align*}
\Hom_{\Db(\clg{E}(X))}(M, N[1]) \to \Hom_{\clg{A}}(\inc(M), \inc(N)[1])
\end{align*}
induced by $\inc$ is bijective.
\end{lem}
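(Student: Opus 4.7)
The plan is to read both sides as classifying \emph{extensions} and then invoke the preceding lemma. The left-hand side, being a $\Hom$ in the bounded derived category of an abelian category, is canonically isomorphic to the Yoneda group $\Ext^1_{\clg{E}(X)}(M,N)$, whose elements are equivalence classes of short exact sequences $0 \to N \to E \to M \to 0$ in $\clg{E}(X)$. The right-hand side similarly classifies distinguished triangles $\inc_\clg{A}(N) \to E \to \inc_\clg{A}(M) \xrightarrow{+}$ in $\clg{A}(X)$, modulo isomorphism fixing the outer terms. The map induced by $\inc_\clg{A}$ sends a short exact sequence to the associated triangle.

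For surjectivity, fix a distinguished triangle $\inc_\clg{A}(N) \to E \to \inc_\clg{A}(M) \xrightarrow{+}$ in $\clg{A}(X)$. By \nref{Lemma}{lem:E_extn_in_A}, the middle term $E$ lies in the full subcategory $\clg{E}(X) \subseteq \clg{A}(X)$; writing $E \simeq \inc_\clg{A}(E')$ and using full faithfulness (property \heqref{enum:ff_from_E_to_A} of \nref{Def.}{defn:ambient}), the structure morphisms of the triangle descend to morphisms $N \to E'$ and $E' \to M$ in $\clg{E}(X)$, yielding a short exact sequence $0 \to N \to E' \to M \to 0$ whose image under $\inc_\clg{A}$ is, up to equivalence, the triangle we started with.

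For injectivity, suppose two short exact sequences $0 \to N \to E_i \to M \to 0$, $i=1,2$, in $\clg{E}(X)$ have equivalent images in $\clg{A}(X)$, so that there is an isomorphism $\inc_\clg{A}(E_1) \simeq \inc_\clg{A}(E_2)$ making the obvious diagram commute. Full faithfulness lifts this to an isomorphism $E_1 \simeq E_2$ in $\clg{E}(X)$, and faithfulness of $\inc_\clg{A}$ guarantees that the diagram commutes already in $\clg{E}(X)$; thus the two extensions are equivalent.

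The only substantive input is \nref{Lemma}{lem:E_extn_in_A}: once we know that extensions in $\clg{A}(X)$ between objects of $\clg{E}(X)$ stay in $\clg{E}(X)$, the rest is a formal consequence of the full faithfulness of $\clg{E}(X) \hookrightarrow \clg{A}(X)$. I do not expect any real obstacle.
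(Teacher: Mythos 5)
Your route is essentially the paper's: surjectivity via \nref{Lemma}{lem:E_extn_in_A} (the middle term of the triangle attached to a class on the right-hand side lies in $\clg{E}(X)$ and, by the full faithfulness \heqref{enum:ff_from_E_to_A}, descends to an extension in $\clg{E}(X)$), injectivity via full faithfulness. Your injectivity argument is sound: if $\inc(\delta_1)=\inc(\delta_2)$ as morphisms, the two image triangles are isomorphic compatibly with the identities on the outer terms, and lifting that isomorphism through the fully faithful $\inc_\clg{A}$ gives a Yoneda equivalence, hence $\delta_1=\delta_2$. The paper words this step differently but equivalently: the zero class corresponds to a split triangle, and a section of $\inc(E)\to\inc(M)$ lifts by fullness, splitting the extension.

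The one genuine soft spot is your framing of $\Hom_{\clg{A}}(\inc(M),\inc(N)[1])$ as ``distinguished triangles modulo isomorphism fixing the outer terms''. In a general triangulated category the connecting morphism is not determined by the first two maps of a triangle: two distinct morphisms $\delta',\delta''\colon\inc(M)\to\inc(N)[1]$ can have identical cone data, differing by precomposition with an automorphism of $\inc(M)$ of the form $\id+t\circ\delta'$ with $t\colon\inc(N)[1]\to\inc(M)$. This does not harm injectivity, but in the surjectivity step you must show that the given morphism $\delta'$ \emph{equals} $\inc(\delta)$ for the descended extension class $\delta$, not merely that its triangle is isomorphic to that of $\inc(\delta)$; with only the axioms of \nref{Def.}{defn:ambient} this requires an extra word (it is automatic once $\Hom_{\clg{A}}(\inc(N)[1],\inc(M))=0$, as in the intended examples where $\clg{E}(X)$ lands in the heart of a t-structure on $\clg{A}(X)$) --- though, to be fair, the paper's own ``surjectivity follows immediately'' is terse at exactly the same point. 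A second, minor omission: justify that the descended sequence $0\to N\to E'\to M\to 0$ is exact in $\clg{E}(X)$, e.g.\ by applying the realisation, using \heqref{enum:ambient_realisation_iso}, and invoking the exactness and faithfulness of $b^*_{\clg{E}}$.
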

\begin{proof}
Surjectivity follows immediately from \nref{Lemma}{lem:E_extn_in_A}.
For injectivity,
let $\delta$ be an element in the left-hand side.
Then $\delta$ and $\inc(\delta)$ correspond to extensions
\begin{align*}
1 \to N \to E \to M \to 1, \quad \inc(N) \to \inc(E) \to \inc(M) \to,
\end{align*}
in $\clg{E}(X)$ and $\clg{A}(X)$, respectively.
Suppose the latter splits,
i.e.,
$\inc(E) \to \inc(M)$ has a section.
By the fullness in \heqref{enum:ff_from_E_to_A},
we can lift this section back to $\clg{E}(X)$,
thereby splitting the extension corresponding to $\delta$,
and we're done.
\end{proof}

\begin{prop}[{Cf.~\cite[Lemma~4.3]{daddezio-esnault}}]\label{prop:pushf_assump_enriched_ambient}
The natural transformation
\begin{align*}
R^ip_*^{\clg{E}} \to H^ip_*^{\clg{A}} \circ \inc_\clg{A},
\end{align*}
is an isomorphism for $i=0,1$.
(See \nref{Rmk.}{rmk:ambient_canonical_maps}.)
\end{prop}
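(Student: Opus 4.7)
The plan is to verify the isomorphism after applying $\Hom_{\Ind\clg{E}}(V, \blank)$ for each $V \in \clg{E}$; the conclusion in $\Ind\clg{E}$ then follows by the Yoneda lemma.

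Fix $M \in \clg{E}(X)$ and set $C := Rp_*^\clg{E} M$ and $C' := p_*^\clg{A} \inc_\clg{A} M$, both of which lie in $\Db^{\geq 0}(\Ind\clg{E})$: the former because $p_*^\clg{E}$ is left exact as the right adjoint of the exact functor $p_\clg{E}^*$, and the latter by condition~\heqref{enum:ambient_truncated}. Derived adjunction applied to $(p_\clg{E}^*, Rp_*^\clg{E})$ and to $(p_\clg{A}^*, p_*^\clg{A}) = (\inc_\clg{A} \circ p_\clg{E}^*, p_*^\clg{A})$ yields, for any $V \in \clg{E}$, natural identifications
\begin{align*}
R\Hom_{\Ind\clg{E}}(V, C) &\simeq R\Hom_{\clg{E}(X)}(p_\clg{E}^* V, M), \\
R\Hom_{\Ind\clg{E}}(V, C') &\simeq R\Hom_{\clg{A}(X)}(\inc_\clg{A} p_\clg{E}^* V, \inc_\clg{A} M),
\end{align*}
under which the natural transformation of the proposition becomes the map on derived Hom-complexes induced by the functor $\inc_\clg{A}$. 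In cohomological degrees $i = 0, 1$ this specialises to the natural map $\Ext^i_{\clg{E}(X)}(p^*V, M) \to \Ext^i_{\clg{A}(X)}(\inc p^*V, \inc M)$ induced by $\inc_\clg{A}$, which is an isomorphism for $i = 0$ by the fully faithfulness in condition~\heqref{enum:ff_from_E_to_A}, and for $i = 1$ by Lemma~\ref{lem:ambient_ext_group_iso}.

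For $i = 0$ this is immediate: since $V$ is concentrated in degree $0$ and $C, C' \in \Db^{\geq 0}$, $H^0 R\Hom(V, \blank) = \Hom(V, H^0(\blank))$, so the iso on $H^0$ of $R\Hom$ yields $\Hom(V, H^0 C) \simeq \Hom(V, H^0 C')$ for every $V$, and Yoneda concludes. For $i = 1$, the truncation triangle $H^0 D \to D \to \tau^{\geq 1} D$ (for $D \in \Db^{\geq 0}$) combined with the identification $H^1 R\Hom(V, \tau^{\geq 1} D) = \Hom(V, H^1 D)$ produces the natural four-term exact sequence
\begin{align*}
0 \to \Ext^1_{\Ind\clg{E}}(V, H^0 D) \to H^1 R\Hom_{\Ind\clg{E}}(V, D) \to \Hom_{\Ind\clg{E}}(V, H^1 D) \to \Ext^2_{\Ind\clg{E}}(V, H^0 D).
\end{align*}
Applied functorially to $D = C$ and $D = C'$, the $i = 0$ step provides $H^0 C \simeq H^0 C'$, hence isos on the outer $\Ext^j(V, H^0\blank)$ terms. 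Together with the iso on the middle $H^1 R\Hom$ term already established, two applications of the four lemma force $\Hom(V, H^1 C) \simeq \Hom(V, H^1 C')$ for every $V \in \clg{E}$; Yoneda then concludes.

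The main technical obstacle is the $i = 1$ step, namely translating an iso of $R\Hom$-cohomology into an iso on the underlying $H^1$-object, handled by the truncation and four-lemma argument sketched above. Everything else is formal derived adjunction combined with the $\Ext$-comparison already furnished by Lemma~\ref{lem:ambient_ext_group_iso}.
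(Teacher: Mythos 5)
Your reduction to a comparison of $R\Hom$-complexes is fine, and the $i=0$ case goes through exactly as you say (it is essentially the paper's own $i=0$ argument). The gap is in the $i=1$ step, in the passage from the isomorphism on $H^1R\Hom(V,\blank)$ to the isomorphism on $\Hom(V,H^1(\blank))$. Your four-term sequence really sits inside the longer exact sequence
\begin{align*}
0 \to \Ext^1(V, H^0 D) \to H^1 R\Hom(V, D) \to \Hom(V, H^1 D) \to \Ext^2(V, H^0 D) \to H^2 R\Hom(V, D),
\end{align*}
and the four lemma only gives you \emph{injectivity} of $\Hom(V,H^1C)\to\Hom(V,H^1C')$ from the data you have (isomorphisms at the $\Ext^1$, $H^1R\Hom$ and $\Ext^2$ spots). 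For \emph{surjectivity} the diagram chase needs exactness at the $\Ext^2$ spot together with injectivity of the degree-two comparison $H^2R\Hom(V,C)\to H^2R\Hom(V,C')$, i.e.\ injectivity of $\Hom_{\Db(\clg{E}(X))}(p^*V, M[2]) \to \Hom_{\clg{A}(X)}(p^*_{\clg{A}}V, \inc_{\clg{A}}M[2])$. This is neither among the axioms of an ambient theory nor supplied by \nref{Lemma}{lem:ambient_ext_group_iso}, which is a degree-one statement only; with no hypothesis at the fourth-and-fifth spots the truncated four-term ladder simply does not force surjectivity (one can write down trivial counterexamples of ladders with the first, second and fourth maps isomorphisms and the third not surjective). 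So as written, your argument proves that $R^1p_*^{\clg{E}}M \to H^1p_*^{\clg{A}}M$ is a monomorphism, not an isomorphism.

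The paper's proof avoids exactly this problem by a different choice of test object: instead of $V\in\clg{E}$ it evaluates the same truncation ladder at the pro-object $\clg{O}^\vee$, where $\clg{O}=b_*\bm1$ is the regular representation in $\Ind\clg{E}$. Since $\clg{O}$ is injective (cf.\ \nref{Prop.}{prop:split-reg-rep}), $\clg{O}^\vee$ admits no non-trivial extensions and indeed no maps to $\clg{D}^{\leq -1}(\clg{E})$, so \emph{both} boundary terms $\Ext^1(\clg{O}^\vee,H^0D)$ and $\Ext^2(\clg{O}^\vee,H^0D)$ vanish, and the middle comparison directly computes $\Hom(\clg{O}^\vee, H^1D)$ with no four-lemma chase and no degree-two input; the conclusion then follows from the conservativity of $\Hom(\clg{O}^\vee,\blank)\simeq b^*$ (at the cost of extending \nref{Lemma}{lem:ambient_ext_group_iso} to the pro-object, which the paper does implicitly). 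If you want to keep your structure, you must either import that trick or find an independent argument for the injectivity of the $\Ext^2$-comparison; the latter is not available in this generality.
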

\begin{proof}
To reduce clutter,
let us suppress the functor $\inc_\clg{A}$ from the notation for the duration of the proof;
it can easily be reinserted.
We begin with the $i=0$ case.
Note that, naturally in $E\in\clg{E}$ and $M\in\clg{E}(X)$, we have
\begin{align*}
\Hom(E, p^\clg{E}_*M)
  = \Hom(p_\clg{E}^*E, M)
  = \Hom(p_\clg{A}^*E, M)
  &= \Hom(E, p^\clg{A}_* M) \\
  &= \Hom(E, H^0p^\clg{A}_*M),
\end{align*}
using
property~\heqref{enum:ff_from_E_to_A}
and property~\heqref{enum:ambient_truncated},
which implies that $p^\clg{E}_* \simeq  H^0p^\clg{A}_*$.

Next, we deal with the more interesting $i=1$ case.
Applying $\Hom(\blank,\blank)$ (in the second argument) to the morphism of triangles
\begin{equation*}\begin{tikzcd}
R^0p_*^{\clg{E}}M \ar[d] \ar[r] & Rp_*^{\clg{E}}M \ar[d] \ar[r] & \tau^{\geq 1}Rp_*^{\clg{E}}M \ar[d] \ar[r] & {} \\
H^0p_*^{\clg{A}}M \ar[r] & p_*^{\clg{A}}M \ar[r] & \tau^{\geq 1}p_*^{\clg{A}}M \ar[r] & {}
\end{tikzcd}\end{equation*}
for $M\in\clg{E}(X)$ (keeping property~\heqref{enum:ambient_truncated} in mind),
yields a morphism of (vertical) exact sequences
\begin{equation*}\begin{tikzcd}
\Hom(\blank, R^0p_*^{\clg{E}}M[1]) \ar[d] \ar[r]
  & \Hom(\blank, (H^0p_*^{\clg{A}}M)[1]) \ar[d] \\[-0.5em]
\Hom(\blank, Rp_*^{\clg{E}}M[1]) \ar[d] \ar[r]
  & \Hom(\blank, p_*^{\clg{A}}M[1]) \ar[d] \\[-0.5em]
\Hom(\blank, (\tau^{\geq 1}Rp_*^{\clg{E}}M)[1]) \ar[d] \ar[r]
  & \Hom(\blank, (\tau^{\geq 1}p_*^{\clg{A}}M)[1]) \ar[d] \\[-0.5em]
\Hom(\blank, R^0p_*^{\clg{E}}M[2]) \ar[r]
  & \Hom(\blank, (H^0p_*^{\clg{A}}M)[2])
\end{tikzcd}\end{equation*}
Denote the regular representation $b_*\bm1$ in $\Ind\clg{E}$ by $\clg{O}$.
After evaluating these long exact sequences at the pro-object $\clg{O}^\vee$,
the topmost and bottommost terms vanish,
since $\clg{O}^\vee$ doesn't have any non-trivial extensions
(cf. \nref{Prop.}{prop:split-reg-rep}).
This lack of extensions implies,
in fact,
that there are no morphisms from $\clg{O}^\vee$ to $\clg{D}^{\leq -1}(\clg{E})$,
and we thus have a commutative diagram
\begin{equation*}\begin{tikzcd}
\Hom(p_\clg{E}^*\clg{O}^\vee, M[1]) \ar[r] \ar[d, equals]
  & \Hom(p_\clg{A}^*\clg{O}^\vee, M[1]) \ar[d, equals] \\[-0.8em]
\Hom(\clg{O}^\vee, Rp^\clg{E}_*M[1]) \ar[r] \ar[d, "\sim"]
  & \Hom(\clg{O}^\vee,p^\clg{A}_*M[1]) \ar[d, "\sim"] \\[-0.5]
\Hom(\clg{O}^\vee, (\tau^{\geq 1}Rp^\clg{E}_*M)[1]) \ar[r] \ar[d, equals]
  & \Hom(\clg{O}^\vee, (\tau^{\geq 1}p^\clg{A}_*M)[1]) \ar[d, equals] \\[-0.8em]
\Hom(\clg{O}^\vee, R^1p^\clg{E}_*M) \ar[r]
  & \Hom(\clg{O}^\vee, H^1p^\clg{A}_*M)
\end{tikzcd}\end{equation*}
Using the definition of $p^*_\clg{A}$ and \nref{Lemma}{lem:ambient_ext_group_iso},
the leftmost vertical arrow is an isomorphism.
Thus, applying
$\Hom(\clg{O}^\vee, \blank)$
to the morphism
$R^1p^\clg{E}_*M \to H^1p^\clg{A}_*M$,
yields an isomorphism.
We conclude by the conservativity of
$\Hom(\clg{O}^\vee, \blank) = \Hom(\blank, \clg{O}) \circ (\blank)^\vee \simeq b^*$.
\end{proof}

\begin{rmk}
In the above proof,
we are implicitly viewing categories as full subcategories of their respective pro-categories
in a number of places after the introduction of the pro-object $\clg{O}^\vee$.
\end{rmk}

Here, finally,
is the point of this subsection:
\begin{prop}\label{prop:enriched_coh_assump}
Let $X$ be a variety in $\SmVar_k$
with a $k$-point $x$.
If $\clg{E}$ admits an ambient theory over $X$,
then the cohomological assumption is satisfied,
i.e.,
\begin{equation*}
b^*R^ip_*^\clg{E} \to R^ip_*b^*_\clg{E}
\end{equation*}
is an isomorphism for $i=0,1$.
\end{prop}
\begin{proof}
This follows from property~\heqref{enum:ambient_real_comp},
\nref{Prop.}{prop:pushf_assump_enriched_ambient},
and the commutativity of the following diagram:
\begin{equation*}\begin{tikzcd}
b^*Rp_*^{\clg{E}} \ar[d] \ar[r] & Rp_*b^*_{\clg{E}} \\
b^*p_*^{\clg{A}}\inc_{\clg{A}} \ar[r] & Rp_*b^*_{\clg{A}}\inc_{\clg{A}} \ar[u, "\heqref{enum:ambient_realization_iso}"']
\end{tikzcd}\end{equation*}
(see \nref{Rmk.}{rmk:ambient_canonical_maps}).
\end{proof}

\subsection{Checking the surjectivity assumption}
We begin with a definition:
\begin{defn}\label{defn:enriched_ss}
Say a weak theory $\clg{E}$ of enriched local systems
\emph{respects semi-simplicity}
if for every $X$ in $\SmVar_k$,
we have that
$b^* \colon \clg{E}(X) \to \Loc(X)$
sends semisimple objects to semisimple objects.
\end{defn}

It turns out that this property is all we need for the surjectivity assumption:
\begin{prop}\label{prop:enriched_surjectivity}
Assume the characteristic of $F$ is zero.
Suppose $\clg{E}$ respects semi-simplicity.
Then, for any $X$ in $\SmVar_k$ admitting an ambient theory
and any $k$-point $x$,
the map
$\bar\varrho \colon \pi_1(X,x) \to S(X,x)$
has Zariski dense image.
\end{prop}
\begin{proof}
We show it on the Tannakian side,
i.e., we work with
\begin{equation*}
\bar\varrho^* \colon \Rep S(X,x) \to \Loc(X).
\end{equation*}
By \nref{Prop.}{prop:tannaka-mor-cor},
we need to prove that the image of $\bar\varrho^*$ is stable under taking subobjects in $\Loc(X)$.
Recall that we have a commutative diagram
\begin{equation*}\begin{tikzcd}
\clg{E}(X) \ar[d, "\iota^*" '] \ar[r, "b^*"] & \Loc(X) \\
\Rep K(X,x) \ar[ur, "\varrho^*"] & \ar[l, hook'] \Rep S(X,x) \ar[u, "\bar\varrho^*" ']
.
\end{tikzcd}\end{equation*}

\emph{Step 1.}
Let $\clg{E}(X)_\rom{ss}$ denote the full subcategory of semisimple objects in $\clg{E}(X)$.
We note that under $\iota^*$, $\clg{E}(X)_\rom{ss}$ lands in the full subcategory
$\Rep S(X,x) \subseteq \Rep K(X,x)$.
Let $G(X,x)_\rom{red}$ be the Tannaka dual of $\clg{E}(X)_\rom{ss}$.
It's the maximal pro-reductive quotient of $G(X,x)$, as $F$ is of characteristic zero.
As the pro-unipotent radical $R_\rom{u}K(X,x)$ is a characteristic subgroup of $K(X,x)$,
it is normal in the larger group $G(X,x)$.
Since $R_\rom{u}G(X,x)$ is the largest connected, pro-unipotent, normal subgroup of $G(X,x)$,
it must contain $R_\rom{u}K(X,x)$.
Thus, $K(X,x) \to G(X,x)_\rom{red}$ factors uniquely through $K(X,x) \to S(X,x)$.

\emph{Step 2.}
Next,
we note that the map $S(X,x) \to G(X,x)_\rom{red}$ is injective.
Indeed, this follows from
$K(X,x) \cap R_\rom{u}G(X,x) = R_\rom{u}K(X,x)$,
%% ^ using the second isomorphism theorem ...
which holds true by the previous step and the definition of the right-hand side.
In particular, for every object $A \in \Rep S(X,x)$,
there exists $M \in \clg{E}(X)_\rom{ss}$
such that $A$ is a subquotient of $\iota^*M$, in $\Rep S(X,x)$.
By the semi-simplicity of $\Rep S(X,x)$,
$A$ can actually be taken to be a subobject of $\iota^*M$.

\emph{Step 3.} We now show that
$\bar\varrho^*$ is full.
Take two objects $M$ and $N$ in $\clg{E}(X)$.
Then,
writing $\iota^*\colon \clg{E}(X) \to \Rep K$, as before,
we have that
\begin{align*}
\Hom(\iota^* M, \iota^* N) = b^*s_*\iHom(M,N)
&= \sigma_*\iHom(b_X^*M, b_X^*N) \\
&= \Hom(\varrho^*\iota^*M, \varrho^*\iota^* N),
\end{align*}
using the $i=0$ part of \nref{Prop.}{prop:pushf_assump_enriched_ambient};
so we have fullness (for $\varrho^*$ and hence also $\bar\varrho^*$) on objects coming from $\clg{E}(X)$.

By the previous step,
every object in $\Rep S(X,x)$ is a direct summand of an object coming from $\clg{E}(X)_\rom{ss}$.
Let two $S(X,x)$-representations $A$ and $B$ be direct summands of $\iota^*M$ and $\iota^*N$,
and cut out by projectors $e$ and $f$, respectively.
Then,
\begin{align*}
\Hom(A, B) = f \circ \Hom(\iota^*M, \iota^*N) \circ e
= \varrho^*(f) \circ \Hom(\varrho^*\iota^*M, \varrho^*\iota^*N) \circ \varrho^*(e) \\
= \Hom(\varrho^*A, \varrho^*B).
\end{align*}

Note finally, that the fullness of $\bar\varrho^*$
implies that its image is stable under taking direct summands
(as fullness lets us lift projectors).

\emph{Step 4.}
Now we are ready to conclude.
Let $A \in \Rep S(X,x)$ and $L \subseteq \varrho^*A$.
By Step 2,
there is $M \in \clg{E}(X)_\rom{ss}$
such that $A \subseteq \iota^*M$.
Thus, $L \subseteq \bar\varrho^*\iota^*M = b^*M$,
whence $L$ is a direct factor of the right-hand side, as $\clg{E}$ respects semi-simplicity,
and we are done by Step 3.
\end{proof}

\subsection{A corollary to Malcev completeness}
Fix an enriched local system $M$ in $\clg{E}(X)$
on some $X\in\SmVar_k$ with a rational point $x$.
Denote by $M_x$ the vector space $x^*b^* M$.
We may consider the associated monodromy representation $\pi_1(X,x) \to \GL(M_x)$.
Take the Malcev completion of this morphism,
and denote it by $\hat\pi_1^M(X, x)$.

\begin{prop}\label{prop:generalized_hain_enriched}
Suppose that $\pi_1(X,x) \to K(X,x)$ is Malcev complete.
Then the group $\hat\pi_1^M(X, x)$ is canonically enriched in $\clg{E}(\pt)$,
in the sense that $\clg{O}(\hat\pi_1^M(X, x))$ is naturally a Hopf algebra object in $\Ind\clg{E}(\pt)$,
or equivalently that there is a natural action by $G(k)$ on $\hat\pi_1^M(X,x)$.
\end{prop}
\begin{proof}
Consider the full subcategory $\langle M, \clg{E}(\pt)\rangle^\otimes$ tensor-generated by $M$ and $\clg{E}(\pt)$ in $\clg{E}(X)$,
i.e., consisting of subquotients of sums of tensor products of $M$, $M^\vee$ and constant objects
(i.e., objects coming from $\clg{E}$).
We obtain a commutative diagram
\begin{equation*}\begin{tikzcd}
1\ar[r]
  & \scr{G}(\langle \iota^*M\rangle^\otimes) \ar[r]
  & \scr{G}(\langle M, \clg{E}(\pt)\rangle^\otimes) \ar[r]
  & G(k) \ar[r]
  & 1
  \\
1\ar[r]
  & \pi_1^\clg{E}(X,x) \ar[u, two heads] \ar[r]
  & G(X,x) \ar[u, two heads] \ar[r]
  & G(k) \ar[u, equal] \ar[r]
  & 1
.
\end{tikzcd}\end{equation*}
We have seen that the bottom row is (split) exact,
and the (split) exactness of the top row follows.

We have a factorization
\begin{align*}
\pi_1(X,x)\to \scr{G}(\langle \iota^*M\rangle^\otimes) \hookrightarrow \GL(M_x),
\end{align*}
of the monodromy representation.
By \nref{Lemma}{lem:alt_malcev_defn},
the Malcev completion of $\pi_1(X,x) \to \GL(M_x)$
only depends on the image in $\GL(M_x)$,
which by this factorization lies in the subgroup
$\scr{G}(\langle \iota^*M\rangle^\otimes)$.
Thus, $\hat\pi_1^M(X, x)$ is the Malcev completion of
$\pi_1(X,x)\to \scr{G}(\langle \iota^*M\rangle^\otimes)$.

Next, $\hat\pi_1^M(X, x)$ can be obtained as the Malcev completion of
$\pi_1^\clg{E}(X,x)\to \scr{G}(\langle\iota^*M\rangle^\otimes)$,
by \nref{Lemma}{lem:malcev_malcev}.
Note that $G(k)$ acts on this morphism,
and by functoriality of Malcev completions (\nref{Prop.}{prop:malcev_functoriality}),
it acts on the Malcev completion, $\hat\pi_1^M(X, x)$.
This finishes the proof.
\end{proof}

\section{Hodge theory and motives}\label{sec:applications}
In this section,
we present our two main applications of the Malcev completeness criterion (Thm.~\ref{thm:M}).
By the previous section,
for each choice of theory of enriched local systems,
we simply need to check the existence of an ambient theory (Def.~\ref{defn:ambient})
and that semi-simplicity is satisfied (Def.~\ref{defn:enriched_ss}).
For the entirety of this section,
the field of coefficients $F$ will be $\Q$.
We extensively use the notation and terminology introduced in the previous section.

\subsection{Variations of Hodge structures}\label{subsec:hodge}
We start with weak theory of enriched local systems
$\clg{E}(\blank) = \MHS(\blank)$
given by \emph{admissible variations of mixed Hodge structures}.
(See, for example,~\cite[Ch.~10,14]{peters-steenbrink}.)
Let $X \in \SmVar_k$ and let $x \in X(\C)$.\footnote{%
  In this setting, $x$ doesn't need to be rational,
  as $\clg{E}(\Spec k) = \clg{E}(\Spec \C)$.%
}
In this context,
we denote the corresponding fundamental group
(previously $\pi_1^{\clg{E}}(X, x)$)
by $\pi_1^\rom{Hdg}(X, x)$
and call it the \emph{Hodge fundamental group}.
The representations of the Hodge fundamental group
are called \emph{local systems of Hodge origin},
and form a full subcategory of the category $\Loc(X)$ of local systems,
see Def.~\ref{defn:E-fund}.
They are exactly those local systems that arise as subquotients
of local systems underlying admissible variations of mixed Hodge structures.

Over the point, we get the category $\MHS$ of graded-polarisable mixed Hodge structures.
We call the Tannaka dual group $H$ of $\MHS$ the \emph{Hodge group}.
% The semisimple objects in $\MHS(X)$ are the polarisable variations of (non-mixed) Hodge structures,
% and they form the full subcategory we'll denote by $\HS(X)$.
% of what we'll call (direct sums of) \emph{pure variations}.
% In particular, the pure Hodge group is the Tannaka dual of the category $\HS$ of polarisable pure Hodge structures.
%% ^ Not used

\begin{lem}\label{lem:MHM_ambient}
We have that $\MHS(\blank)$ is a weak theory of enriched local systems,
and that
(the derived category of) mixed Hodge modules
provides an ambient theory in the sense of \nref{Def.}{defn:ambient}.
\end{lem}
\begin{proof}
Let $X \in \SmVar_k$.
We write $\MHM(X)$ for the category of mixed Hodge modules on $X$,
and $\clg{A}(X)$ for the bounded derived category $\Db(\MHM(X))$.
The functor $\inc_\clg{A}$ is the composition of the usual inclusion $\MHS(X) \subseteq \MHM(X)$,
followed by the inclusion of the heart $\MHM(X) \subseteq \clg{A}(X)$,
and then a shift by $-\dim(X)$.
(This is because local systems embed via $\clg{D}(X)$ into $\scr{P}(X)$ after a shift by $\dim(X)$.)
Condition~\heqref{enum:ff_from_E_to_A} is then clear,
and condition~\heqref{enum:ambient_adjoint} is in~\cite[Thm.~0.1]{MHM}.
For the realization functor $b^*_\clg{A}$,
we consider the (exact) forgetful functor
$\MHM(X) \to \scr{P}(X)$
from mixed Hodge modules to perverse sheaves.
Deriving it, and combining with Beilinson's equivalence
$\Db(\scr{P}(X)) \simeq \clg{D}(X)$ (\cite[Thm.~1.3]{beilinson}),
we get the functor $b^*_\clg{A} \colon \clg{A}(X) \to \clg{D}(X)$
(denoted by $\rom{rat}$ in \cite{MHM}).
The isomorphism in~\heqref{enum:ambient_realization_iso} follows.

Condition~\heqref{enum:E_after_real} follows from~\cite[Thm.~0.2]{MHM}
and the definition of smooth mixed Hodge modules that precedes it.
Condition~\heqref{enum:ambient_real_comp} is part of~\cite[Thm.~0.1]{MHM}.
Finally,
in this case,
condition~\heqref{enum:ambient_truncated} can be checked after realization,
where it follows from the fact that $Rp_*$ has perverse cohomological amplitude $\geq -\dim(X)$
(\cite[{}4.2.4]{BBD}).
\end{proof}

Note that $\clg{E}(\blank) = \MHS(\blank)$
respects semi-simplicity
by Deligne's semi-simplicity theorem, \cite[Thm.~4.2.6]{deligne-hodge-II}.
Thus, plugging Propositions~\ref{prop:enriched_coh_assump} and~\ref{prop:enriched_surjectivity}
into Thm.~\ref{thm:M}
and combining with Prop.~\ref{prop:generalized_hain_enriched},
we deduce the following:
\begin{thm}\label{thm:generalized_hain}
Let $X$ be a smooth, geometrically connected $k$-variety and $x \in X(\C)$ a complex point.
\begin{enumerate}
\item
The map $\pi_1(X,x)\to K^\rom{Hdg}(X, x)$
is Malcev complete.
Equivalently,
$\pi_1^\rom{Hdg}(X,x) = K^\rom{Hdg}(X,x)$, and
the full subcategory of local systems on $X$ of Hodge origin
is stable under extension in the category of local systems on $X$.
In particular,
every unipotent local system on $X$ is of Hodge origin.
\item
For $V$ an admissible variation of mixed Hodge structures,
the Malcev completion $\hat\pi_1^V(X,x)$
of the associated monodromy representation
$\pi_1(X,x) \to \GL(V_x)$
is canonically equipped with a mixed Hodge structure,
in the sense that it admits a canonical action by the Hodge group.
\qed
\end{enumerate}
\end{thm}

\begin{rmk}
Part 1 of \nref{Thm.}{thm:generalized_hain} has been previously shown by
D'Addezio and Esnault in~\cite[Thm.~4.4]{daddezio-esnault},
by a different method.
Part 2 is a generalization of~\cite[Thm.~13.1]{hain},
where $V$ is assumed to be pure and polarizable,
and $\pi_1(X,x)$ is assumed to map Zariski densely onto $\Aut(V_x,\langle,\rangle)$.
In particular, we prove Conjecture~5.5 from~\cite{arapura-hodge}.
\end{rmk}

\subsection{Motivic local systems}
Ivorra and Morel have constructed and studied categories of \emph{perverse (Nori) motives} in~\cite{ivorra-morel},
and we will briefly review their construction.
Given an additive category $\clg{Q}$
and an additive functor $T\colon \clg{Q} \to \clg{A}$
to an abelian category $\clg{A}$,
they construct a universal abelian category $\mathbf{A}^\rom{ad}(\clg{Q},T)$
factoring this functor along an exact and faithful functor to $\clg{A}$.
Below, this universal category $\mathbf{A}^\rom{ad}(\clg{Q},T)$ will be called
the \emph{Nori category} of $T \colon \clg{Q} \to \clg{A}$.

Given a $k$-variety (separated and of finite type) $X$,
they consider the additive functor
\begin{align*}
{}^pH^0 \circ \Bti^* \colon \DA_\rom{ct}(X) \to \clg{P}(X),
\end{align*}
from the (triangulated) category of constructible \'etale motivic sheaves (with $\Q$-coefficients) on $X$
to ($\Q$-linear) perverse sheaves on $X$.
We denote the Nori category of this functor by $\PM(X)$:
it is the category of \emph{perverse motives} considered in~\cite{ivorra-morel}.\footnote{%
  They denote it by $\scr{M}(X)$.}
Here, $\Bti^*$ is the Betti realization functor constructed by Ayoub in~\cite{ayoub_betti}.
We note that the category $\PM(k)$ of perverse motives over the field $k$
is equivalent to the classical category $\NM(k)$ of Nori motives
(\cite[Prop.~2.11]{ivorra-morel}).

Note that $\NM(k)$ is neutral Tannakian,
and that its Tannaka dual $\scr{G}^\rom{mot}(k)$ is the \emph{motivic Galois group} of $k$.
It's a pro-algebraic group (over $\Q$).

Perverse motives come equipped with a realization functor $\PM(X) \to \clg{P}(X)$ to perverse sheaves
which we denote by $b_X^*$.
Ivorra and Morel develop a four-functor formalism for perverse motives.
In~\cite{luca},
this is extended to a full six-functor formalism,
with the construction of a tensor product on $\Db\PM(X)$.

\begin{defn}\label{defn:mot_loc_sys}
Let $X$ be a smooth, geometrically connected $k$-variety.
Denote the full subcategory of $\PM(X)$
on objects that are sent into $\Loc(X)[d]$ under $b_X^*$
by $\MLS(X)$.
It's the category of \emph{motivic local systems} on $X$.
\end{defn}

By~\cite[Thm.~6.2]{luca},
these are neutral Tannakian categories over $\Q$,
neutralized by $x^* \circ b_X^*$
for any complex point $x \in X(\C)$.
Since pullbacks preserve local systems,
the same is true for motivic local systems.
Moreover, the Betti realization induces faithful exact functors
$b^* \colon \MLS(X) \to \Loc(X)$.
In conclusion,
we have a weak theory of enriched local systems
$\clg{E} = \MLS$
(Def.~\ref{defn:enriched}).

We denote the associated categories $\Loc_\clg{E}(X)$ (see Def.~\ref{defn:E-fund})
by $\Loc_\rom{geo}(X)$
and call its objects local systems \emph{of geometric origin}.
Their dual groups we denote by $\pi_1^\rom{mot}(X,x)$
(for a complex point $x$).
We moreover denote
the Tannaka dual of $\MLS(X)$ relative the fibre functor $x^*\circ\Bti^*$ by $G(X,x)$.
As in Section~\ref{sec:enriched},
we denote the kernel of $G(X,x) \to \scr{G}^\rom{mot}(k)$ by $K(X,x)$,
and its maximal pro-reductive quotient by $S(X,x)$.
Finally,
note that if $x$ is a rational point,
then we get a section of the homomorphism $G(X,x) \to \scr{G}^\rom{mot}(k)$.

In summary, for $X$ a geometrically connected, smooth $k$-variety
with a $k$-point $x$, we have a commutative diagram
\begin{equation*}\begin{tikzcd}
&[-1em] S(X,x) \\[-0.5em]
1 \ar[r] & K(X,x) \ar[u, two heads] \ar[r]
  &[-1em] G(X,x) \ar[r]
  & \scr{G}^\rom{mot}(k) \ar[l, bend right, start anchor = north west, end anchor = north east, yshift=-.5em] \ar[r] & 1 \\[-0.5em]
\pi_1(X,x) \ar[ur, bend right=4] \ar[r, two heads] & \pi_1^\rom{mot}(X, x) \ar[u, hook]
\end{tikzcd}\end{equation*}
whose middle row is (split) exact.
(Just as in Section~\ref{sec:enriched}.)

\begin{lem}\label{lem:motivic_ambient}
Let $X \in \SmVar_k$.
Then $\Db(\PM(X))$ is an ambient theory for $\MLS$ over $X$ (Def.~\ref{defn:ambient}).
\end{lem}
\begin{proof}
We follow the notation in Def.~\ref{defn:ambient} and
let $\mathcal{A}(X) := D^\mathrm{b}(\PM(X))$.
For $\inc_{\mathcal{A}}$,
we take the
composition $\MLS(X) \hookrightarrow \PM(X) \hookrightarrow \Db(\PM(X))$.
For $b^*_\mathcal{A}$,
we take the Betti realization.
Lastly,
the isomorphism asked for in \heqref{enum:ambient_realization_iso} is actually an equality in this case.
We now quickly check that the conditions for an ambient theory hold in this case.

Condition~\eqref{enum:ambient_adjoint} is true by~\cite{ivorra-morel},
and conditions~\eqref{enum:ff_from_E_to_A} and~\eqref{enum:E_after_real} are true by definition.
Next,~\eqref{enum:ambient_real_comp} is true by~\cite{ivorra-morel}
(compatibility of the four operations with the Betti realization).
Finally,
condition~\eqref{enum:ambient_truncated} can
(by conservativity)
be checked under realization,
where it follows from~\cite[4.2.4]{BBD}.
\end{proof}

In~\cite[Section 6]{ivorra-morel},
it is shown that their perverse motives admit a robust theory of weights.
This lets us show the following:
\begin{lem}
The theory $\clg{E}(\blank) = \MLS(\blank)$ respects semi-simplicity
(Def.~\ref{defn:enriched_ss}).
\end{lem}
\begin{proof}
We need to show that
$b^* \colon \MLS(X) \to \Loc(X)$ sends semisimple objects to semisimple objects.
But thanks to \cite[Thm.~0.1]{swann},
perverse motives admit a Hodge realization,
and
we can factor the Betti realization through the Hodge realization as
\begin{equation*}
\MLS(X) \to \MHS(X) \to \Loc(X).
\end{equation*}
Moreover,
in the proof of~\cite[Prop.~4.7]{swann}
it is explained how~\cite[Cor.~6.27]{ivorra-morel}
implies that the Hodge realization sends pure objects to pure objects.
In both categories,
the semisimple objects are exactly the ones obtained as direct sums of pure ones.
We can thus conclude using Deligne's semi-simplicity theorem.
\end{proof}

Thus,
as in the Hodge theoretic case,
we get the following theorem:
\begin{thm}\label{thm:motivic_hain}
Let $X \in \SmVar_k$ with a rational point $x$.
\begin{enumerate}
\item
The homomorphism $\pi_1(X,x) \to K(X,x)$ is Malcev complete.
Equivalently,
\begin{equation*}
\pi_1^\rom{mot}(X,x) = K(X,x) := \ker(G(X,x) \to \scr{G}^\rom{mot}(k)),
\end{equation*}
and
the full subcategory of local systems on $X$ of geometric origin is stable under extension.
In particular,
every unipotent local system on $X$ is of geometric origin.
\item
For a motivic local system $M$ on $X$,
the Malcev completion $\hat\pi_1^M(X,x)$
of the associated monodromy representation
$\pi_1(X,x) \to \GL(M_x)$,
is motivic,
in the sense that it carries a canonical action by the motivic Galois group.
\qed
\end{enumerate}
\end{thm}
The second part is a (Nori) motivic refinement
of Hain's theorem.

\section{De Rham fundamental groups}\label{sec:deRham}
We round out the paper with a slightly different kind of application.
The goal of this section is to use
the unipotent isomorphism criterion (Thm.~\ref{thm:U2})
to give a quick proof of Lazda's theorem.
The theorem says that,
for a nice enough family,
the relative unipotent de Rham fundamental group of the family
and the unipotent fundamental group of the special fibre
are isomorphic.

Let $k=F$ be a field of characteristic zero,
let $S$ be a smooth $k$-scheme,
let $f\colon X\to S$ be a smooth $S$-scheme with an $S$-point $x\colon S \to X$,
let $s\colon \pt\to S$ be a closed point of $S$,
and write $x$ also for the closed point of $X$ given by $x \circ s$.
Denote the fibre of $X$ over $s$ by $X_s$,
and the canonical inclusion by $i\colon X_s\to X$.
For $Y$ a smooth $k$-scheme,
let $\IC(Y)$ denote the category of vector bundles on $Y$ with regular integrable connection:
in short, the category of \emph{regular connections}
(see for example \cite[Def.~1.2.2, Def.~5.3.2]{Dmod}).
If $Y$ is moreover geometrically connected,
then $\IC(Y)$ is a $k$-linear Tannakian category.

Given a morphism of smooth $k$-schemes $g\colon Y\to Z$,
we have an induced functor $g^*\colon\IC(Z) \to \IC(Y)$.
And in particular, if $y\colon \pt \to Y$ is a closed point of $Y$,
and $Y$ is geometrically connected,
then $y^*\colon\IC(Y)\to\VecOp$ is a neutral fibre functor.
In this case,
we denote the Tannaka dual of $\IC(Y)$ by $\pi_1^\rom{dR}(Y,y)$,
and call it the \emph{de Rham fundamental group} of $(Y,y)$.

Suppose that both $Y$ and $Z$ as above are geometrically connected and $g\colon Y \to Z$,
then,
following the terminology of \nref{Section}{sec:unipotency},
we denote the full subcategory of $g^*$-unipotent objects in $\IC(Y)$ by $\IC^\rom{un}_g(Y)$.
If $p\colon Y\to \pt$ is the structure map of $Y$,
then we simply write $\IC^\rom{un}(Y) := \IC^\rom{un}_p(Y)$:
this is the full subcategory of (absolutely) unipotent objects in $\IC(Y)$.
We denote the Tannaka dual of $\IC^\rom{un}(Y)$ by $\pi_1^\rom{udR}(Y,y)$,
and call it the \emph{unipotent de Rham fundamental group} of $(Y,y)$
(it is the unipotent completion of $\pi_1^\rom{dR}(Y,y)$).
We don't have a good symbol for the Tannaka dual of $\IC^\rom{un}_g(Y)$,
and we simply denote it by $\scr{G}(\IC^\rom{un}_g(Y),y)$,
following our general conventions.

\begin{rmk}
In the literature,
it is often the case that
only the unipotent de Rham fundamental group
is considered.
Usually, the word ``unipotent'' is then omitted from the terminology.
\end{rmk}

Assuming that $X$ and $S$ are geometrically connected,
and that $f$ has geometrically connected fibres,
we obtain a diagram of exact and faithful functors
\begin{equation*}\begin{tikzcd}
\IC^\rom{un}_f(X) \ar[d, "x^*"'] \ar[r, "i^*"] & \IC^\rom{un}(X_s) \ar[d, "x^*"'] \\
\IC(S) \ar[u, "f^*"', bend right] \ar[r, "s^*"] & \VecOp \ar[u, "f_s^*"', bend right]
\end{tikzcd}\end{equation*}
which is an instance of the Standard Situation (\nref{Section}{sec:standard}).
We get, in particular, a split exact sequence of pro-algebraic groups
\begin{equation*}\begin{tikzcd}
1
  \ar[r] &[-2.8em] K
  \ar[r] &[-1.8em] \scr{G}(\IC^\rom{un}_f(X),x)
  \ar[r] & \pi_1^\rom{dR}(S,s) \ar[l, bend right]
  \ar[r] &[-1em] 1 \\[-1em]
& \pi_1^\rom{udR}(X_s,x) \ar[u]
\end{tikzcd}\end{equation*}
and a morphism from $\pi_1^\rom{udR}(X_s,x)$ to the kernel $K$.
We call the kernel the \emph{relative unipotent de Rham fundamental group} of $X/S$
and denote it by $\pi_1^\rom{udR}(X/S,x)$.
If we could apply Thm.~\ref{thm:U2} to this situation,
we would conclude that $\pi_1^\rom{udR}(X_s,x) \to \pi_1^\rom{udR}(X/S,x)$ is an isomorphism.

In order to apply Thm.~\ref{thm:U2},
we have two conditions that need to hold:
the natural maps
\begin{align*}
s^* \circ R^kf_* \to R^kf_{s,*} \circ i^*
\end{align*}
are isomorphisms, for $k=0,1$
(we use subscript-asterisks to denote ind-right adjoints, as usual).

\begin{prop}
Assume that $k$ is algebraically closed.
Let $f \colon X \to S$ be a smooth family
with geometrically connected fibres,
let $s \colon S \to X$ be a section,
and keep all the notation introduced above.
Assume moreover that $f$ admits a good compactification.\footnote{%
  I.e., it factors as an open immersion $X \to \bar{X}$,
  followed by a smooth and proper map $\bar{X} \to S$,
  such that the complement of $X$ in $\bar{X}$
  is a relative normal crossing divisor.}
Then the natural maps
\begin{align*}
s^* \circ R^kf_* \to R^kf_{s,*} \circ i^*
\end{align*}
are isomorphisms for $k=0,1$.
\end{prop}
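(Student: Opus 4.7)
The plan is to prove a strengthening of the proposition, namely that the base change natural transformation $s^* \circ R^k f_* \to R^k f_{s,*} \circ i^*$ is an isomorphism for \emph{every} $k \geq 0$ on the category $\IC^{\rom{un}}_f(X)$; the statement for $k = 0, 1$ is then a trivial consequence. Passing to the stronger statement is essential, because the induction step below requires the base change in all degrees, even to conclude the desired degrees.

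For the base case, consider $M = f^*N$ for some $N \in \IC(S)$. The projection formula gives a natural isomorphism $R^\bullet f_*(f^*N) \simeq (R^\bullet f_* \mathcal{O}_X) \otimes N$, reducing the problem to base change for $R^\bullet f_* \mathcal{O}_X$ itself. Using the good compactification $j \colon X \hookrightarrow \bar{X}$ with $\bar{f} \colon \bar{X} \to S$ smooth proper and $D = \bar{X} \setminus X$ a relative normal crossings divisor, Deligne's logarithmic de Rham comparison theorem identifies $Rf_*\mathcal{O}_X$ with $R\bar{f}_* \Omega^\bullet_{\bar{X}/S}(\log D)$. The key input is then the degeneration of the Hodge-to-de Rham spectral sequence for smooth proper morphisms with normal crossings divisor (\cite[II,III]{deligne-hodge-II} and their relative versions), which implies that each sheaf $R^j\bar{f}_* \Omega^i_{\bar{X}/S}(\log D)$ is locally free and compatible with arbitrary base change, and therefore so is the total hypercohomology $R^\bullet \bar{f}_*\Omega^\bullet_{\bar{X}/S}(\log D)$.

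For the induction step, let $M \in \IC^{\rom{un}}_f(X)$ have unipotency index $n \geq 2$. Fix a short exact sequence $0 \to M' \to M \to f^*N \to 0$ with $M'$ of unipotency index strictly less than $n$. Applying $R^\bullet f_*$ and $R^\bullet f_{s,*} \circ i^*$ yields two long exact sequences related by the base change transformation. By Step 1, base change is an isomorphism for $f^*N$ in all degrees; by the inductive hypothesis, it is an isomorphism for $M'$ in all degrees. The five-lemma, applied degree by degree, then gives the isomorphism for $M$ in all degrees, closing the induction.

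The main obstacle is the base case: the fact that $R^\bullet f_*\mathcal{O}_X$ commutes with arbitrary base change, which is the genuine input into the argument and requires the full strength of relative Hodge theory in the NCD setting. The induction step, by contrast, is entirely formal once the base case is in hand. One should also verify carefully that each of the functors involved lands in $\IC$ (rather than in a larger category of quasi-coherent sheaves with connection); this follows from the local freeness part of the same Hodge-theoretic input.
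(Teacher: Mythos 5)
There is a genuine gap, and it sits in your very first move: you treat $R^kf_*$ and $R^kf_{s,*}$ as the geometric (log de Rham / $\scr{D}$-module) higher direct images, whereas in the statement they are the derived functors of the \emph{ind-right adjoints} of $f^*\colon\IC(S)\to\IC^\rom{un}_f(X)$ and $f_s^*\colon\VecOp\to\IC^\rom{un}(X_s)$; that is, they compute Ext groups \emph{inside} the Tannakian subcategories of ($f$-)unipotent connections, equivalently cohomology of the relative and fibrewise unipotent de Rham fundamental groups. In degrees $0$ and $1$ these happen to agree with the geometric functors, but establishing and exploiting that agreement is precisely the content of the proof: the paper identifies $f_*$ with $\scr{H}^{-d}f_+$ on $f$-unipotent connections (Lazda's Lemma~1.9) and then runs the ambient-theory argument of \nref{Prop.}{prop:pushf_assump_enriched_ambient} inside regular holonomic $\scr{D}$-module complexes, where base change $f_{s,+}i^+\simeq s^+f_+$ holds in all degrees; the transfer back to the Tannakian functors uses extension-stability of connections in the ambient category and the regular-representation trick, and needs no control of the Tannakian functors in degree $2$. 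Your Hodge-theoretic input (degeneration and local freeness of $R^q\bar f_*\Omega^p_{\bar X/S}(\log D)$) computes the geometric $R^\bullet f_*$ of the trivial connection, not the Tannakian one, so it does not supply your base case beyond degree~$1$.

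This matters because your five-lemma induction genuinely needs the extra degrees (to get the isomorphism at $R^1(M)$ you must already know injectivity of base change at $R^2(M')$, which is why you strengthened the statement to all $k$). But for the Tannakian functors the base case in degree $k\geq 2$ is $H^k$ of the pro-unipotent de Rham fundamental group of the fibre (resp.\ its relative analogue), which in general differs from $H^k_\rom{dR}$: it only records the part of cohomology built from $H^1$, a rational $K(\pi,1)$-type condition. Indeed, the all-degrees comparison for the Tannakian functors is essentially equivalent to the isomorphism $\pi_1^\rom{udR}(X_s,x)\simeq\pi_1^\rom{udR}(X/S,x)$ that this proposition is designed to feed into \hyperlink{thmU:main}{Thm.~U}, so it cannot be taken as an input. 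If, on the other hand, you really do mean the geometric functors throughout, your sketch is a reasonable base-change argument in relative log de Rham cohomology, but then it proves a different statement and leaves untouched the actual point of the proposition, namely the comparison for the ind-right adjoints in degrees $0$ and $1$.
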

\begin{proof}
Following Lazda,
we set $f_*^\rom{dR} = \scr{H}^{-d}(f_+(\blank))$,
where $d$ is the relative dimension of $f\colon X \to S$,
and $f_+$ is the pushforward of regular holonomic complexes of $\scr{D}$-modules.
By~\cite[Lemma~1.9]{lazda},
$f_*^\rom{dR}$ takes $f$-unipotent regular connections to regular connections,
and,
viewing it as a functor
$\IC^\rom{un}_f(X) \to \IC(S)$,
it is equal to $f_*$.
Moreover,
pullback ($f^+$) of regular holonomic complexes becomes $f^*[-d]$
when restricted to the abelian category of regular connections.

Next, note that regular connections are stable under extension
in the category of regular holonomic $\scr{D}$-modules.
For the $k=1$ part,
the same strategy as in the proof of \nref{Prop.}{prop:pushf_assump_enriched_ambient} works,
and the details are left to the reader.
The input required from the theory of $\scr{D}$-modules to make the argument work
is that base change holds:
$f_{s,+}i^+ \simeq s^+f_+$ (\cite[Thm.~1.7.3, Rmk.~1.7.6]{Dmod}).
\end{proof}

Combining this result and~Thm.~\ref{thm:U2},
we get the following:
\begin{thm}[{\cite[Cor.~1.20]{lazda}}]
Under the assumptions of the previous proposition,
the map $\pi_1^\rom{udR}(X_s,x) \to \pi_1^\rom{udR}(X/S,x)$ is an isomorphism.
\end{thm}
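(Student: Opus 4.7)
The plan is to apply \hyperlink{thmU:main}{Thm.~U} directly. First I would identify the de Rham diagram
as an instance of the Standard Situation from \nref{Section}{sec:standard}. Under the dictionary
$\clg{M} = \IC^\rom{un}_f(X)$, $\clg{T} = \IC^\rom{un}(X_s)$, $\clg{E} = \IC(S)$,
$B^* = i^*$, $b^* = s^*$, with the $f^*, s^*, \varphi^*, \sigma^*$ of \nref{Section}{sec:standard}
playing the roles of $x^*$ (section pullback), $f^*$ (structure pullback), $x^*$ on the special fibre,
and $f_s^*$ respectively, the Tannaka duals are $H = \pi_1^\rom{dR}(S,s)$, $G = \scr{G}(\IC^\rom{un}_f(X),x)$,
$P = \pi_1^\rom{udR}(X_s,x)$, and $K = \pi_1^\rom{udR}(X/S,x)$. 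The induced homomorphism
$\varrho\colon P\to K$ of the Standard Situation is exactly the map whose isomorphism we want to prove.

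Next I would verify the two hypotheses of \hyperlink{thmU:main}{Thm.~U}. Pro-unipotency of $P$ is
immediate: by construction $\IC^\rom{un}(X_s)$ is a unipotent Tannakian category, so its Tannaka dual
is pro-unipotent (\nref{Rmk.}{rmk:tannaka-unip}). For the cohomological hypothesis, I need
$b^*R^i s_* \to R^i\sigma_* B^*$ to be an isomorphism for $i=0,1$. Translating through the dictionary,
this is exactly the statement $s^*\circ R^k f_* \to R^k f_{s,*}\circ i^*$ is an isomorphism for $k=0,1$,
which is precisely the preceding proposition. Here one small but essential bookkeeping step is to
check that the Tannakian ind-right adjoint $f_*\colon \Ind\IC^\rom{un}_f(X) \to \Ind\IC(S)$ really
coincides (together with its higher derived functors) with the $\scr{D}$-module pushforward
$\scr{H}^{-d+\bullet}(f_+(\blank))$ appearing in the proof of the previous proposition --- but this is
the very identification recorded at the start of that proof, following Lazda.

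Both hypotheses of \hyperlink{thmU:main}{Thm.~U} thus hold, and the theorem yields that
$\varrho\colon \pi_1^\rom{udR}(X_s,x) \to \pi_1^\rom{udR}(X/S,x)$ is an isomorphism. There is no real
obstacle: once one has set up the Standard Situation correctly and quoted the base change
statement for $\scr{D}$-modules proved in the previous proposition, everything is packaged by
\hyperlink{thmU:main}{Thm.~U}. If anything warrants care, it is simply the verification of the
monoidal compatibilities in \heqref{diag:standard} (notably $b^*f^* \simeq \varphi^* B^*$ via
the factorisation of the closed point $x\colon\pt\to X$ through $i$), which are all formal.
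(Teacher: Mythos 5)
Your proposal is correct and is exactly the paper's argument: the de Rham diagram is set up as an instance of the Standard Situation, the preceding proposition supplies the cohomological hypothesis $s^*\circ R^k f_* \to R^k f_{s,*}\circ i^*$ for $k=0,1$ (including the identification of the Tannakian $f_*$ with the $\scr{D}$-module pushforward, which is recorded there), and \hyperlink{thmU:main}{Thm.~U} then gives the isomorphism, pro-unipotency of $\pi_1^\rom{udR}(X_s,x)$ being automatic. No divergence from the paper's proof to report.
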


\begin{rmk}
The assumption that $k$ is algebraically closed is not expected to be necessary
(cf.~\cite[Rmk.~1.21]{lazda}),
and the only place we use it is in the invocation of~\cite[Lemma~1.9]{lazda}.
\end{rmk}

\begin{rmk}
An anonymous referee produced an example showing
that if the family $f\colon X \to S$ fails to admit a section,
then the two groups in the theorem need not be isomorphic.
\end{rmk}

In work in progress,
I am applying the Malcev completeness criterion (Thm.~\ref{thm:M})
to this setting of de Rham fundamental groups.
It yields,
under appropriate hypotheses,
a result on the stability under extension
of regular connections over $X_s$ that come from regular connections over $X$.

\printbibliography%[heading=bibintoc]

\end{document}